\theoremstyle{plain}
\newtheorem{thm}{Theorem}[section]
\newtheorem{theorem}[thm]{Theorem}
\newtheorem{lemma}[thm]{Lemma}
\newtheorem{prop}[thm]{Proposition}
\newtheorem{claim}[thm]{Claim}
\def\@rst #1 #2other{#1}
\newcommand\MR[1]{\relax\ifhmode\unskip\spacefactor3000 \space\fi
  \MRhref{\expandafter\@rst #1 other}{#1}}
\newcommand{\MRhref}[2]{\href{http://www.ams.org/mathscinet-getitem?mr=#1}{MR#2}}
\theoremstyle{definition}
\numberwithin{equation}{section} 
\newcommand{\dsb}{\begin{adjustwidth}{2.5em}{0pt}
\begin{footnotesize}}
\newcommand{\dse}{\end{footnotesize}
\end{adjustwidth}}
\newcommand{\ssb}{\begin{adjustwidth}{2.5em}{0pt}}
\newcommand{\sse}{\end{adjustwidth}}
\newcommand{\aryb}{\begin{eqnarray*}}
\newcommand{\arye}{\end{eqnarray*}}
\def\alb#1\ale{\begin{align*}#1\end{align*}}
\def\allb#1\alle{\begin{align}#1\end{align}}
\newcommand{\eqb}{\begin{equation}}
\newcommand{\eqe}{\end{equation}}
\newcommand{\eqbn}{\begin{equation*}}
\newcommand{\eqen}{\end{equation*}}
\newcommand\p{\partial}
\newcommand\e{\varepsilon}
\newcommand\R{\mathbb{R}}
\newcommand\Z{\mathbb{Z}}
\newcommand\N{\mathbb{N}}
\newcommand\norm[1]{\left\Vert#1\right\Vert}
\newcommand\vphi{\varphi}
\let\originalleft\left
\let\originalright\right
\renewcommand{\left}{\mathopen{}\mathclose\bgroup\originalleft}
\renewcommand{\right}{\aftergroup\egroup\originalright}
\title{Channels of Energy for the Linearized Energy Critical Wave Equation in Even Dimensions $N\geq 8$}
 \date{ }
 \author{Andres A. Contreras Hip}
\date{}
\begin{document}

\maketitle
\begin{abstract}
We prove an exterior energy estimate for the linearized energy critical wave equation around a multisoliton for even dimensions $N\geq 8.$ This extends the result in \cite{cdkm} to higher dimensions. During the proof we encounter various additional important technical difficulties compared to lower dimensions. In particular, we need to deal with a number of generalized eigenfunctions of the static operator which increases linearly in $N.$ This makes the analysis of projections onto these eigenfunctions a higher dimensional problem, which requires linear systems to control. This is a crucial ingredient in our upcoming work where we give an alternative proof of the soliton resolution for the wave maps equation based on the method of channels of energy developed by Duyckaerts-Kenig-Merle.
\end{abstract}
\tableofcontents

\section{Introduction}

We consider radial solutions to the linearized energy critical wave equation around the ground state in even dimensions $N,$
\begin{equation}\label{LW}
\left\{
\begin{matrix}
\p_{tt} u_L - \Delta u_L + Vu_L = 0\\
u_L\vert_{t=0} = (u_0,u_1)
\end{matrix}
\right.
\end{equation}
with initial conditions $(u_0,u_1) \in \mathcal{H}=\mathcal{H}^1\times L^2(\R^N),$ where $V$ is the potential given by
\[
V=-\frac{N+2}{N-2} W^{\frac{4}{N-2}},
\]
with
\[
W= \left(1+\frac{\vert x\vert^2}{N(N-2)}\right)^{-\frac{N-2}{2}}.
\]
This is the linearized equation for
\[
\p_t^2 u - \Delta u = \vert u\vert^{\frac{4}{N-2}} u
\]
around the stationary solutions $\pm W.$ By standard arguments, \eqref{LW} is well posed in $\dot{H}^1 \times L^2$ and moreover the outer radiated energy
\[
E_{\mathrm{out}} := E_{\mathrm{out}}^- + E_{\mathrm{out}}^+, \; \lim_{t \to \pm\infty} \int_{\vert x\vert > \vert t\vert} \vert \nabla_{t,x} u_L(t,x)\vert^2 dx
\]
is well defined for $t \to \pm \infty.$

For the free wave equation,
\[
\left\{
\begin{matrix}
\p_t^2 u_F - \Delta u_F =0\\
(u_F,\p_t u_F)\vert_{t=0} = (u_0,u_1)
\end{matrix}
\right.
\]
one has the channels of energy estimate
\begin{equation}\label{basicenergychan}
\norm{(u_0,u_1)}_{\dot{H}^1\times L^2} \lesssim \sqrt{E_{\mathrm{out}}}
\end{equation}
if $N$ is odd. However, in the case that $N$ is even the full estimate fails (\cite{energypartition}), instead we have
\[
\norm{u_0}_{\dot{H}^1} \lesssim \sqrt{E_{\mathrm{out}}}
\]
if $N \equiv 0 \mod 4,$ and
\[
\norm{u_1}_{L^2} \lesssim \sqrt{E_{\mathrm{out}}}
\]
if $N \equiv 2 \mod 4.$ Extensions to domains of the form $\{\vert x\vert \geq R+\vert t\vert\}$ (obtained in \cite{corotational, ogchannels, radfields}) will also be used in the present work. Let $\Lambda = \frac{N-2}{2}+x\cdot \nabla$ denote the scaling generator. For the equation \eqref{LW}, there are two natural counterexamples to the analogue of \eqref{basicenergychan}, namely $\Lambda W,$ $t \Lambda W.$ For odd dimensions $N,$ the analogue of \eqref{basicenergychan} is
\[
\norm{\Pi_{\dot{H}^1}^\perp u_0}_{\dot{H}^1}+\norm{\Pi_{L^2}^\perp u_0}_{L^2} \lesssim \sqrt{E_{\mathrm{out}}} \mbox{ if }N \geq 3 \mbox{ is odd}
\]
where the projections $\Pi_{\dot{H}^1}^\perp,\Pi_{L^2}^\perp$ are defined by
\begin{equation}\label{Pi}
\Pi_{\dot{H}^1}^\perp = \Pi_{\dot{H}^1}(\mathrm{Span}(\Lambda W))^\perp, \; \Pi_{L^2}^\perp = \Pi_{L^2}(\mathrm{Span}(\Lambda W))^\perp.
\end{equation}
Here, if $H$ is any Hilbert space and $E$ any closed subspace, we let $\Pi_H(E)$ denote the orthogonal projection onto $E$ in $H$ and we let $\Pi_H(E)^\perp = 1- \Pi_H(E).$

In this paper, we will restrict to even dimensions $N.$ We will need to define logarithmic weakenings of $L^2.$ Let $Z_{\alpha}$ be the space defined by the norm
\[
\norm{f}_{Z_\alpha} := \sup_{R > 0} \frac{R^{-\frac{N}{2}-\alpha}}{\langle \log R\rangle} \left(\int_{R < \vert x\vert < 2R} f^2(x) dx\right)^{\frac{1}{2}}.
\]
We will also need the norm
\[
\norm{f}_{Z_{\alpha,\lambda}} = \sup_{R>0} \frac{R^{-3-\alpha}}{\inf_{1\leq j \leq J} \left\langle \frac{R}{\lambda_j}\right\rangle} \left(\int_{R \leq \vert x\vert \leq 2R} f^2 dx\right)^{\frac{1}{2}},
\]
where $\lambda = (\lambda_1, \ldots \lambda_J)$ is such that $\lambda_1 \geq \cdots \geq \lambda_J.$

We have the following result.
\begin{theorem}\label{single}
There exists a constant $C$ such that for any radial solution $u_L$ of \eqref{single} we have
\[
\norm{\Pi_{\dot{H}^1}^\perp u_0}_{\dot{H}^1} + \norm{\Pi_{L^2}^\perp u_1}_{Z_{-3}} \leq C\sqrt{E_{\mathrm{out}}},
\]
if $N \equiv 0 \mod 4,$
\[
\norm{\Pi_{L^2}^\perp u_1}_{L^2} + \norm{\nabla \Pi_{\dot{H}^1}^\perp u_0}_{Z_{-3}} \leq C\sqrt{E_{\mathrm{out}}},
\]
if $N \equiv 2 \mod 4.$
\end{theorem}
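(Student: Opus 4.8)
The plan is to split space-time into the exterior light cone $\{|x| > R + |t|\}$, where the solution is a controlled perturbation of a free wave, and the complementary region near the origin, which is governed by the static operator $L := -\Delta + V$. On the exterior, $u_L$ solves $\p_{tt} u_L - \Delta u_L = -V u_L$ with $|V(x)| \lesssim |x|^{-4}$; by Duhamel's formula, finite speed of propagation, and Hardy's inequality (to bound $\||x|^{-4} u_L(s)\|_{L^2(|x| > R + s)} \lesssim (R+s)^{-3}\|\nabla u_L(s)\|_{L^2}$), the exterior energy of $u_L$ agrees with that of the free evolution of the truncated data $(u_0, u_1)\mathbf{1}_{\{|x| > R\}}$ up to an error of size $O(R^{-2})$ relative to the ambient energy norm of $u_L$. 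To the free part I will apply the even-dimensional exterior energy estimates recalled in the introduction, i.e.\ \cite{energypartition} together with the $\{|x| \geq R + |t|\}$-refinements of \cite{corotational, ogchannels, radfields}: in even dimensions these control, modulo a finite-dimensional space, only the ``position'' datum $u_0$ in $\dot H^1$ when $N \equiv 0 \bmod 4$, or the ``velocity'' datum $u_1$ in $L^2$ when $N \equiv 2 \bmod 4$, while the complementary component survives only in a logarithmically weakened norm --- this is the structural source of the $Z_{-3}$ norm in the statement.

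The region near the origin is organized around the generalized eigenfunctions of $L$: starting from $L\Lambda W = 0$ one solves recursively $L g_{k+1} = -g_k$ with $g_1 = \Lambda W$, obtaining radial profiles whose decay at infinity degrades step by step, and the number of these profiles that interact with the exterior obstruction spaces of the truncated free problem grows linearly in $N$ --- this is the essential new difficulty compared with $N \le 6$. Accordingly I will decompose the data (using cut-offs near the origin, so that the profiles, which need not themselves lie in $\mathcal H$, are legitimate) as $(u_0, u_1) = (\alpha \Lambda W,\, \beta \Lambda W) + \sum_j (a_j, b_j)\,\Phi_j + (\tilde u_0, \tilde u_1)$, where each $\Phi_j$ is the Cauchy datum of a polynomial-in-$t$ solution of \eqref{LW} built from the $g_k$ and their linear-in-$t$ companions, and $(\tilde u_0, \tilde u_1)$ is orthogonal to all of them. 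The orthogonal remainder is genuinely free-like in the exterior, so the free estimate plus the $O(R^{-2})$ perturbation yields $\|\tilde u_0\|_{\dot H^1} + \|\tilde u_1\|_{Z_{-3}} \lesssim \sqrt{E_{\mathrm{out}}}$ when $N \equiv 0 \bmod 4$ and $\|\tilde u_1\|_{L^2} + \|\nabla \tilde u_0\|_{Z_{-3}} \lesssim \sqrt{E_{\mathrm{out}}}$ when $N \equiv 2 \bmod 4$, with the $O(R^{-2})$ term absorbed by a bootstrap that uses the control of the near-origin contribution just below; the coefficients $\alpha, \beta$ are killed by $\Pi^\perp_{\dot H^1}$ and $\Pi^\perp_{L^2}$, hence are harmless.

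It remains to control the $a_j, b_j$, and here the linear systems from the abstract appear. Testing the exterior-energy functionals and the asymptotic radiation-pattern maps, evaluated over a suitable finite family of times, against the profiles $\Phi_j$ produces a linear system $M\,(a, b)^{\top} = v$ with $|v| \lesssim \sqrt{E_{\mathrm{out}}}$, and one needs $M$ to be invertible with $\|M^{-1}\|$ bounded (by a constant allowed to depend on $N$). I expect to obtain this by computing the leading-order asymptotics of each $g_k$ and exploiting the resulting approximately triangular, staircase structure of $M$, treating its off-diagonal entries and logarithmic corrections as perturbations. Combining the coefficient bounds $|a_j|, |b_j| \lesssim \sqrt{E_{\mathrm{out}}}$ with the explicit, finite $\dot H^1$- and $Z_{-3}$-norms of the $\Phi_j$ then produces the two inequalities of Theorem \ref{single}.

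The step I expect to be the main obstacle is exactly this analysis of $M$. For $N \le 6$ at most one generalized eigenfunction beyond $\Lambda W$ appears, the system is scalar, and a one-line positivity argument suffices; here the size of $M$ grows with $N$, several of the $g_k$ are comparable at the radii that matter, and the even-dimensional logarithmic corrections enter its entries, so one must genuinely track the interactions and verify non-degeneracy. A secondary difficulty is the bookkeeping needed to thread the $N \equiv 0$ versus $N \equiv 2 \bmod 4$ dichotomy simultaneously through the free-wave input and the tower of profiles, and to confirm that the total loss is exactly one factor of $\langle \log R \rangle$, so that the output of the second term is $Z_{-3}$ and not a strictly weaker space --- in particular that the highest relevant profile, which sits at the edge of the ambient energy space, contributes only this single logarithm.
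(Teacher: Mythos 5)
Your blueprint correctly identifies the two organizing structures — the tower of generalized eigenfunctions $T_k^\infty$ obtained by recursively solving $(-\Delta + V)T_{k+1} = -T_k$ from $T_0^\infty = \Lambda W$, the polynomial-in-$t$ solutions $S_k^{\infty,\sigma}$ built from them, and the appearance of matrices recording interactions between these profiles. However, the proposed engine for the first half of the estimate does not close. You argue that, after truncation at radius $R$, the potential contributes an error $O(R^{-2})$ relative to $\|u_L\|_{\dot H^1 \times L^2}$, which you then want to absorb ``by a bootstrap.'' But the quantity $\|u_L\|_{\dot H^1 \times L^2}$ is \emph{not} controlled by $\sqrt{E_{\mathrm{out}}}$, even after removing $\Lambda W$ (the static components precisely produce zero exterior radiation), so there is nothing for the bootstrap to bootstrap against. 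Moreover the theorem needs the exterior cone $\{|x| > |t|\}$, i.e.\ $R = 0$, where $R^{-2}$ is not small; and as $R \downarrow 0$ the obstruction space for the free exterior-energy estimate (Proposition 3.2) keeps its full dimension, so there is no large-$R$ regime to retreat to. The paper sidesteps this precisely by abandoning a direct perturbative bound in favour of a compactness/contradiction scheme (Lemma 4.7), where a normalized sequence of counterexamples is produced, weak limits are identified as trivial using the rigidity theorem (Proposition 4.4), and the free-wave estimate of \cite{exterior10} is applied only asymptotically. The case $R_n \to 0$ is the hardest; handling it requires the singular branch of the kernel (the profiles $T_k^0$ built from $\Gamma$) together with a careful analysis of the near-degenerate Gram matrix $B$ of the $T_k^\infty$ as $R_n \to 0$, weighted-$\dot H^1$ spaces, and a further decomposition by cut-off profiles $\chi_0 T_k^0$ — none of which appears in your plan.

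Separately, the $Z_{-3}$ half of the statement does not fall out of the free-wave exterior energy estimate ``modulo linear systems.'' Proposition 3.2 yields the alternating $\dot H^1_R$/$L^2_R$ norm of the projected data, not a logarithmically weighted $L^2$ bound on the \emph{other} component. The $Z_{-3}$ control requires the entire apparatus of Section 5: the averaging operator $\mathcal A$ reducing to an $(N-2)$-dimensional radial wave equation, a dyadic-in-scale decomposition of $\partial_t u_L(0)$ through the time-averaging identity in Lemma 5.2 with telescoping coefficient estimates \eqref{eq108}–\eqref{eq111}, and the elliptic/energy-flux bootstrapping of Lemma 5.3 converting $Z_{-2}$ control of $\mathcal A\Pi_{L^2}^\perp \partial_t u_L(0)$ into the stated $Z_{-3}$ bound. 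Your plan gestures at ``confirming the total loss is exactly one $\langle\log R\rangle$'' but provides no mechanism for producing this; that logarithm is extracted very concretely from the number of dyadic scales traversed in the coefficient telescoping, not from the single application of the free estimate you envisage. Finally, the linear system you describe — invert the ``radiation-pattern map'' $M$ — is qualitatively different from the matrices that actually appear: the paper's $A_0(R')$ (Proposition 4.4), $A$, and $B$ (Lemma 4.7) are Gram matrices of the $T_k^\infty$/$\tilde S_k^0$ used to bound and eliminate coefficients inside the contradiction argument, not a direct inversion of the data-to-radiation map.
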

We will also consider the linearized energy critical wave equation around a multisoliton. To define this, let $J \in \N$ and let
\[
\Lambda_J := \{\lambda = (\lambda_1, \ldots , \lambda_J)\in (0,\infty)^J, \lambda_J < \lambda_{J-1} < \cdots < \lambda_1\}.
\]
Let
\[
V_\lambda = \sum_{j = 1}^J V_{\{\lambda_j\}},
\]
where $f_{\{\lambda\}}$ denotes the scaling
\[
f_{\{\lambda\}}(x) := \frac{1}{\lambda^2}f\left(\frac{x}{\lambda}\right).
\]
We define the scalings
$f_{(\lambda)}$ as the $\dot{H}^1$ critical scaling
\[
f_{(\lambda)}(x) = \frac{1}{\lambda^{\frac{N-2}{2}}} f\left(\frac{x}{\lambda}\right).
\]
We define the scalings
$f_{(\lambda)}$ as the $L^2$ critical scaling
\[
f_{[\lambda]}(x) = \frac{1}{\lambda^{\frac{N}{2}}} f\left(\frac{x}{\lambda}\right).
\]
We will also consider the equation
\begin{equation}\label{multieq}
\left\{
\begin{matrix}
\p_{tt}u_L - \Delta u_L + V_{\lambda} u_L = 0,\\
u_L\vert_{t=0} = (u_0,u_1),
\end{matrix}
\right.
\end{equation}
which is the energy critical wave equation linearized around a multisoliton.

Let $\gamma(\lambda)$ be defined by
\[
\gamma(\lambda) = \sup_{1\leq j \leq J-1}\frac{\lambda_j^2}{\lambda_{j+1}^2}.
\]
We have the following statement for the multisoliton case.

\begin{theorem}\label{mainmulti}
There exists a dimensional constant $C$ such that for any radial solution $u_L$ of \eqref{multieq} we have
\[
\norm{\Pi_{\dot{H}^1}^\perp u_0}_{L^2} + \norm{\Pi_{L^2}^\perp u_1}_{Z_{-3,\lambda}} \leq C\left(\sqrt{E_{\mathrm{out}}}+\gamma(\lambda)\norm{(u_0,u_1)}_{\dot{H}^1\times L^2}\right),
\]
if $N \equiv 0 \mod 4,$
\[
\norm{\Pi_{L^2}^\perp u_1}_{L^2} + \norm{\nabla \Pi_{\dot{H}^1}^\perp u_0}_{Z_{-3,\lambda}} \leq C\left(\sqrt{E_{\mathrm{out}}}+\gamma(\lambda)\norm{(u_0,u_1)}_{\dot{H}^1\times L^2}\right),
\]
if $N \equiv 2 \mod 4.$
\end{theorem}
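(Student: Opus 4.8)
The plan is to deduce Theorem~\ref{mainmulti} from the single--soliton estimate of Theorem~\ref{single} by an induction on the number of solitons $J$, in the spirit of \cite{cdkm}; the base case $J=1$ is exactly Theorem~\ref{single} after rescaling by $\lambda_1$. The geometric input is finite speed of propagation together with the exterior energy estimates on truncated cones $\{|x|>R+|t|\}$ recalled in the introduction (from \cite{corotational, ogchannels, radfields}): for fixed $R$ the radiated energy in such a cone is at most $E_{\mathrm{out}}$ and, up to controllable errors, depends only on the data in $\{|x|>R\}$. One therefore runs the exterior estimate at every radius $R$, using the soliton that dominates the potential near $|x|\sim R$, and treats the remaining solitons as a perturbation.

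Concretely, I would introduce the separating radii $\rho_k\sim\sqrt{\lambda_k\lambda_{k+1}}$, at which $V_{\{\lambda_k\}}$ and $V_{\{\lambda_{k+1}\}}$ become comparable, and on the outer region $\{|x|>\rho_1\}$ compare $V_\lambda$ with $V_{\{\lambda_1\}}$: after rescaling by $\lambda_1$, the leftover potential $\sum_{j\ge2}V_{\{\lambda_j\}}$ becomes, in the norms that enter the Duhamel/perturbation analysis of the cone estimate, an error of order $\gamma(\lambda)$ relative to the reference potential $V$. Applying Theorem~\ref{single} and undoing this perturbation controls $u_0,u_1$ on $\{|x|>\rho_1\}$ modulo the single direction $\Lambda W$ rescaled to $\lambda_1$, at the cost of $\sqrt{E_{\mathrm{out}}}+\gamma(\lambda)\norm{(u_0,u_1)}_{\dot{H}^1\times L^2}$; on the complementary region $\{|x|<\rho_1\}$ the tail of $V_{\{\lambda_1\}}$ is, relative to $\sum_{j\ge2}V_{\{\lambda_j\}}$, again an error of order $\gamma(\lambda)$, so one invokes the inductive hypothesis for the $J-1$ solitons at scales $\lambda_2>\cdots>\lambda_J$. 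Peeling the solitons off from the outside in and summing the contributions of the successive shells --- which is precisely why the $\lambda$--adapted norms $Z_{-3,\lambda}$ are built as they are, so that the pieces telescope rather than accumulate a factor depending on $J$ --- yields control of $u_0,u_1$ on all of $\R^N$ modulo a finite--dimensional subspace.

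That subspace is spanned by the (generalized) threshold elements of the operators $-\Delta+V_{\{\lambda_j\}}$, rescaled to each $\lambda_j$: the scaling element $\Lambda W$ and --- the new feature in dimension $N\ge8$ --- the polynomially growing solutions of $(-\Delta+V)g=0$, whose number grows linearly with $N$. The main obstacle, in my view, is to show that the coefficients of $u_0,u_1$ along all of these directions \emph{except} the $\Lambda W$'s are themselves controlled by the right--hand side. Already for a single soliton this requires solving a linear system whose size grows with $N$, matching these generalized eigenfunctions against the radiated profile; for a multisoliton the system couples the scales, because the eigenfunction attached to the $j$--th soliton does not vanish on the shells belonging to the others. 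It is, however, block--triangular up to off--diagonal entries of order $\gamma(\lambda)$, since that eigenfunction restricted to the $k$--th shell is small by this amount, so one must prove it invertible with inverse bounded uniformly in $\lambda$ and $J$, solve for the coefficients, and then check that the contributions of the extra even--dimensional eigenfunctions, and of every threshold element attached to a scale other than the one under consideration, are absorbed into $\sqrt{E_{\mathrm{out}}}+\gamma(\lambda)\norm{(u_0,u_1)}_{\dot{H}^1\times L^2}$ in the stated $Z_{-3,\lambda}$ (respectively $\dot{H}^1$) norm. Carrying the even--dimensional logarithmic losses uniformly across the $J$ scales, and arranging the linear systems so that the constants neither blow up with $J$ nor degenerate as the solitons become well separated, is the principal technical burden; the two parities $N\equiv0$ and $N\equiv2\bmod 4$ then go through by the same scheme, differing only in which half of the data is controlled in the strong norm and which only in the weakened norm $Z_{-3,\lambda}$, as dictated by the even--dimensional free channel estimate.
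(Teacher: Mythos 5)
Your high-level strategy matches the paper's: iterate over the soliton scales $\lambda_j$, introduce the separating radii $\sqrt{\lambda_j\lambda_{j+1}}$ (these are the $R_j^+$ of the paper), treat the non-dominant potentials perturbatively with errors of size $\gamma(\lambda)$, and close a linear system to control the coefficients along the finite-dimensional family of generalized eigenfunctions $T_k^\infty$. That is the correct backbone, and you correctly identify the linear-system step as the new technical burden in high dimensions.

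There is however a genuine gap: you treat the $\dot H^1$ (resp.\ $L^2$) bound and the $Z_{-3,\lambda}$ bound as a single estimate produced by ``running the exterior estimate at every radius,'' whereas the paper must split the theorem into two propositions proved by quite different machinery. The non-resonant half (Proposition~\ref{multifirsthalf}) proceeds as you describe, via exterior Strichartz (Lemma~\ref{multistrich}) and a scaled single-soliton exterior rigidity estimate (Lemma~\ref{claim46}). The resonant half (Proposition~\ref{p67}), giving the $Z_{-3,\lambda}$ bound, requires the separate elliptic/averaging-operator technology of Section~5 (Lemmas~\ref{56equiv},~\ref{0improv}) applied after a careful three-piece cutoff decomposition $v_j=v_j^1+v_j^2+v_j^3$ adapted to the shells, combined with the crucial $Z_{-3}$-to-$L^1L^2$ bound of Lemma~\ref{L43} on $V_{\{\lambda\}}u$. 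Without the latter you cannot absorb $\sum_{i\neq j}V_{\{\lambda_i\}}v_j$ as an $L^1_tL^2_x$ source with a $\gamma^\alpha$ gain, and your Duhamel perturbation step does not close in the $Z_{-3,\lambda}$ topology. Relatedly, the linear systems in the paper are not one coupled near-block-triangular system across all scales: they live at a single scale $\lambda_j$ (the matrices $A$, $B$ in Lemma~\ref{notinterestingsingle}, and $\tilde M$ in Proposition~\ref{p67}), with the cross-scale coupling deferred to the perturbative errors and to the orthogonality input $(u_0,u_1)\perp\big((\Lambda W)_{(\lambda_i)},(\Lambda W)_{[\lambda_i]}\big)$; organizing it the way you propose would force you to prove a uniform-in-$J$ and uniform-in-$\lambda$ invertibility that the scale-by-scale argument avoids. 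Finally, your appeal to an inductive hypothesis at $J-1$ scales is not how the paper proceeds (it does a one-pass chain $j=1,\dots,J$ with a final step $\eta_J=0$), and one would need to check that the $Z_{-3,\lambda}$ norm restricted to $\{|x|<\rho_1\}$ is comparable to $Z_{-3,\lambda'}$ with $\lambda'=(\lambda_2,\dots,\lambda_J)$ for that route to even make sense.
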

Theorems \ref{single} and \ref{mainmulti} were proven in \cite{mainref} for the case of dimensions $N=6,8.$ However, in higher dimensions, substantial additional work is needed. In the one soliton case, to bound $\norm{\Pi_{\dot{H}^1}^\perp u_0}_{\dot{H}^1}$ if $N \equiv 0 \mod 4$ (respectively $\norm{\Pi_{L^2}^\perp u_0}_{L^2}$ if $N \equiv 2 \mod 4$) one needs to use a channels of energy estimate for the free wave equation combined with a contradiction argument to show that any asymptotic counterexample to Theorem \ref{single} is trivial (see Lemma \ref{notinterestingsingle}). Additionally, in the multisoliton case (Theorem \ref{mainmulti}), one proceeds by induction on the number of solitons. An additional argument is used in the analogous contradiction argument to control the orthogonal projection onto the span of the generalized eigenfunctions of $-\Delta +V$ corresponding to $\Lambda W.$

Theorem \ref{mainmulti} will be used in \cite{new} to provide an alternative proof to that of Jendrej and Lawrie in \cite{JL} of the soliton resolution for radial equivariant wave maps on $\mathbb{S}^2$ in even dimensions.

\medskip
\noindent\textbf{Acknowledgements.} The author would like to thank Carlos Kenig for suggesting the problem, useful discussions and comments, and for carefully reading earlier versions of this manuscript.

\section{More general nonlinearities}

Our main result also holds for other similar equations. Let us consider the equation
\begin{equation}\label{othereqn}
\p_t^2 u - \Delta u + \vphi(r,u) = 0
\end{equation}
for $r\in \R^+,$ $t \in \R.$ Suppose that $\vphi$ is of the form
\begin{equation}\label{vphiform}
\vphi(r,u) = \sum_{k =2}^\infty \vphi_k r^{(k-1)\left(\frac{N}{2}-1\right)-2} u^k,
\end{equation}
where
\[
\lim_{k\to \infty} \tau^k \vphi_k = 0
\]
for any $\tau>0.$ We will make some additional assumptions:
\begin{itemize}
\item[(A1)] If $u$ is a solution to \eqref{othereqn}, then so is $u_\lambda(r,t) = \lambda^{\frac{N-2}{2}} u(\lambda r , \lambda t).$
\item[(A2)] All static radial solutions $U(r)$ of \eqref{othereqn} verify
\[
\p_r^\alpha U(r) \sim r^{-(N-2)-\alpha}
\]
as $r \to \infty$ for $\alpha \in \N.$
\end{itemize}
One can then consider the equation \eqref{othereqn} linearized around $U(r),$
\begin{equation}\label{otherlin}
\p_t^2 u-\Delta u + \p_2 \vphi (r,U)u=0,
\end{equation}
where $\p_2$ denotes the derivative with respect to the second variable of $\vphi$ and where $U$ is any static solution to \eqref{othereqn}.

We have the following result.
\begin{theorem}\label{othersinglethm}
Suppose $\vphi$ is a function satisfying assumptions (A1) and (A2) and is of the form \eqref{vphiform}. Then for any radial solution $u$ of \eqref{otherlin}, we have the channels of energy estimate
\[
\norm{\Pi_{\dot{H}^1}^\perp u_0}_{L_2} + \norm{\Pi_{L^2}^\perp u_1}_{Z_{-3}} \leq C\sqrt{E_{\mathrm{out}}},
\]
if $N \equiv 0 \mod 4,$
\[
\norm{\Pi_{L^2}^\perp u_1}_{L_2} + \norm{\nabla \Pi_{\dot{H}^1}^\perp u_0}_{Z_{-3}} \leq C\sqrt{E_{\mathrm{out}}},
\]
if $N \equiv 2 \mod 4.$
\end{theorem}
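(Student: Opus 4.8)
The plan is to rerun the argument behind Theorem~\ref{single} (i.e.\ the scheme of \cite{cdkm,mainref}, with the even-dimensional logarithmic loss absorbed into the $Z_\alpha$ norms exactly as there), replacing the explicit potential $V=-\tfrac{N+2}{N-2}W^{4/(N-2)}$ by $\tilde V(r):=\partial_2\varphi(r,U(r))$ and checking that the only properties of $V$ the argument actually consumes survive under (A1), (A2) and the form \eqref{vphiform}. Those properties are: (i) the scaling invariance of the nonlinear equation, which supplies the two explicit obstructions to the naive estimate together with the identity $(-\Delta+\tilde V)\Lambda U=0$; (ii) pointwise bounds $\partial_r^\alpha\tilde V(r)=O(r^{-2-\sigma-\alpha})$ as $r\to\infty$ for some $\sigma>0$, with $\tilde V$ smooth on $(0,\infty)$ and bounded near the origin; and (iii) the description at spatial infinity of the finite tower of generalized zero-energy eigenfunctions of $-\Delta+\tilde V$, whose length grows linearly in $N$ and which is governed only by the indicial exponents $0$ and $-(N-2)$ of the radial Laplacian. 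Granting (i)--(iii), the free-wave channels-of-energy input in the exterior regions $\{|x|\ge R+|t|\}$ (valid for all even $N$, as recalled in the introduction), the compactness/contradiction argument of Lemma~\ref{notinterestingsingle}, and the linear-algebraic control of the projections onto this tower produce the stated estimate exactly as for $W$.

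For (i): the static equation reads $-\Delta U+\varphi(r,U)=0$, and by (A1) each rescaling $\lambda^{(N-2)/2}U(\lambda\,\cdot\,)$ is again static; differentiating in $\lambda$ at $\lambda=1$ gives $(-\Delta+\tilde V)\Lambda U=0$, so that $(\Lambda U,0)$ and $(0,\Lambda U)$ are a static and a linearly growing solution of \eqref{otherlin} with $E_{\mathrm{out}}=0$ — the analogues of $\Lambda W$ and $t\Lambda W$. As for $W$ one checks that $\Lambda U$ is regular at the origin, lies in $\dot H^1$ (and, for $N\ge 6$, in $L^2$), and spans the radial $\dot H^1$-kernel of $-\Delta+\tilde V$: the radial ODE is second order, one solution is excluded by regularity at $0$, and because $\tilde V$ decays faster than $r^{-2}$ the two behaviours $1$ and $r^{-(N-2)}$ at infinity allow only the latter in $\dot H^1$.

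For (ii): from \eqref{vphiform}, $\partial_2\varphi(r,u)=\sum_{k\ge2}k\varphi_k\,r^{(k-1)(N/2-1)-2}u^{k-1}$, and the hypothesis $\tau^k\varphi_k\to0$ for every $\tau>0$ makes this series, and every series obtained from it by differentiating in $r$ and in $u$, locally uniformly absolutely convergent. Inserting $\partial_r^\alpha U(r)\sim r^{-(N-2)-\alpha}$ from (A2), the $k=2$ term dominates and one obtains $\partial_r^\alpha\tilde V(r)=O(r^{-(N+2)/2-\alpha})$ as $r\to\infty$; for $N\ge 6$ this is at least the $r^{-4-\alpha}$ decay used in the $W$-case, and near the origin $\tilde V$ is bounded by the regularity of $U$. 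Thus every estimate in the Newton-type recursive construction of exterior solutions — which uses only such bounds on the potential — goes through, the stronger decay only improving the error terms.

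The part demanding genuine work is (iii), exactly as for $W$ in the present paper. One must track the orthogonal projections of the putative counterexample $u_L$ onto the $\sim N$-dimensional span of the generalized eigenfunctions $\Lambda U=\rho_0,\rho_1,\rho_2,\dots$ defined recursively by $(-\Delta+\tilde V)\rho_j=\rho_{j-1}$, and show that the coupled linear system they satisfy — obtained by matching asymptotic expansions across the interface $|x|\sim R+|t|$ — forces every component except the $\Lambda U$-component to vanish, the latter being discarded by the definition of $\Pi^\perp$. This step is insensitive to the precise shape of $\tilde V$: it is dictated by the indicial structure at infinity and by $N$ alone, both identical to the $W$ case, so the linear systems already derived here apply unchanged. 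The one place the decay rate is used is in checking that the now infinitely many subleading terms of $\tilde V$ do not spoil the solvability and remainder bounds of those systems, and the estimate $\partial_r^\alpha\tilde V=O(r^{-(N+2)/2-\alpha})$ suffices because it beats every threshold appearing in the construction. I expect this bookkeeping, rather than the passage from $W$ to $U$ itself, to be the main obstacle.
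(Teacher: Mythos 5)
Your proposal is correct and follows exactly the route the paper takes: it states that the proof of Theorem~\ref{othersinglethm} is identical to that of Theorem~\ref{single}, which only uses the asymptotics of $U$ supplied by (A1) and (A2). Your more detailed unpacking of which properties of the potential (scaling identity for $\Lambda U$, decay $\tilde V = O(r^{-(N+2)/2})\lesssim r^{-4}$ for $N\ge 6$, indicial structure of the generalized kernel) are actually consumed by the argument is accurate and fills in what the paper leaves implicit.
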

The proof of Theorem \ref{othersinglethm} is the same as that of Theorem \ref{single}. The proof of Theorem \ref{single} only uses the asymptotics of $U.$

Similarly, we have the follwoing analogue in the case of a multisoliton.
\begin{theorem}
Suppose $\vphi$ is a function satisfying assumptions (A1) and (A2) and is of the form \eqref{vphiform}. Also, suppose that $\lambda = (\lambda_1, \ldots , \lambda_J)$ is such that $\gamma(\lambda) \leq \gamma_0$ for some small enough $\gamma_0.$ Then there exists a constant $C$ such that for any radial solution $u$ of
\[
\left\{
\begin{matrix}
\p_t^2 u -\Delta u + \p_2 \left(\vphi(r,U_{\lambda_1}) + \cdots + \vphi(r,U_{\lambda_J})\right) u = 0\\
(u,\p_t u)\vert_{t=0} = (u_0,u_1),
\end{matrix}
\right.
\]
we have
\[
\norm{\Pi_{\dot{H}^1}^\perp u_0}_{L_2} + \norm{\Pi_{L^2}^\perp u_1}_{Z_{-3,\lambda}} \leq C\left(\sqrt{E_{\mathrm{out}}}+\gamma(\lambda)\norm{(u_0,u_1)}_{\dot{H}^1\times L^2}\right),
\]
if $N \equiv 0 \mod 4,$
\[
\norm{\Pi_{L^2}^\perp u_1}_{L_2} + \norm{\nabla \Pi_{\dot{H}^1}^\perp u_0}_{Z_{-3,\lambda}} \leq C\left(\sqrt{E_{\mathrm{out}}}+\gamma(\lambda)\norm{(u_0,u_1)}_{\dot{H}^1\times L^2}\right),
\]
if $N \equiv 2 \mod 4.$
\end{theorem}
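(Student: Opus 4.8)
The plan is to reduce this multisoliton statement for general $\varphi$ to the already-established multisoliton Theorem \ref{mainmulti} by the same mechanism used to pass from Theorem \ref{single} to Theorem \ref{othersinglethm}: the proof of Theorem \ref{mainmulti} only uses the spatial asymptotics $\p_r^\alpha U(r)\sim r^{-(N-2)-\alpha}$ of each soliton profile, together with the scaling invariance of the underlying nonlinear equation, and these two facts are exactly what assumptions (A2) and (A1) provide. So the first step is to check that the potential $V_\lambda^\varphi := \p_2\big(\varphi(r,U_{\lambda_1})+\cdots+\varphi(r,U_{\lambda_J})\big)$ has, near each scale $\lambda_j$, the same structure as $V_\lambda$: each summand $\p_2\varphi(r,U_{\lambda_j})$ is a rescaling (by $\lambda_j$, using (A1)) of $\p_2\varphi(r,U)$, and since $\varphi$ has the homogeneity form \eqref{vphiform}, $\p_2\varphi(r,U)(r) = \sum_{k\ge 2} k\varphi_k r^{(k-1)(N/2-1)-2}U^{k-1}$; plugging in $U\sim r^{-(N-2)}$ from (A2) gives $\p_2\varphi(r,U)(r)\sim r^{-4}$ at infinity — i.e. the critical decay rate — exactly as for $V=-\tfrac{N+2}{N-2}W^{4/(N-2)}$. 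The convergence condition $\tau^k\varphi_k\to 0$ for all $\tau$ guarantees the series defining $\varphi$ and its $u$-derivative converge and can be differentiated term by term in the relevant range, so $\p_2\varphi(r,U)$ is a genuine smooth potential with the same decay.

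Next I would run the induction on the number of solitons $J$ exactly as in the proof of Theorem \ref{mainmulti}. The base case $J=1$ is Theorem \ref{othersinglethm}, which is already granted. For the inductive step, one freezes the smallest (or largest) scale, treats the corresponding single-soliton potential as the "main" potential and the remaining $J-1$ potentials as a perturbation that is small in the region where the frozen soliton lives — here the smallness is precisely measured by $\gamma(\lambda)$, which is why the hypothesis $\gamma(\lambda)\le\gamma_0$ appears. The free-wave channels-of-energy estimates on exterior cones $\{|x|\ge R+|t|\}$ from \cite{corotational, ogchannels, radfields}, the contradiction/compactness argument isolating asymptotic counterexamples (the analogue of Lemma \ref{notinterestingsingle}), and the linear-algebra control of projections onto the generalized eigenfunctions of $-\Delta+V_\lambda^\varphi$ attached to $\Lambda U$ all go through verbatim, since each of these ingredients is invoked only through the asymptotics of the profiles and the dimension $N$, never through the specific algebraic form of $W$.

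The main obstacle — or rather the one point demanding genuine care — is the generalized-eigenfunction analysis: for $\varphi$ in the class \eqref{vphiform} one must verify that the static operator $-\Delta+\p_2\varphi(r,U)$ admits the same finite chain of generalized eigenfunctions associated to the scaling mode $\Lambda U$, with the same growth/decay rates at infinity, so that the linear systems controlling the projections $\Pi^\perp$ have the same invertibility structure. This follows because $\Lambda U$ is automatically a zero-energy eigenfunction (differentiating the static equation in the scaling parameter, using (A1)), and the higher generalized eigenfunctions are obtained by solving the inhomogeneous ODE $(-\Delta+\p_2\varphi(r,U))\psi_{k+1} = \psi_k$, whose asymptotic behavior is governed by the indicial roots of $-\Delta$ at infinity (hence by $N$ alone) once one knows $\p_2\varphi(r,U)$ decays like $r^{-4}$ — which is what (A2) and \eqref{vphiform} give. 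With the decay matching, the log-weighted spaces $Z_{-3,\lambda}$ are the correct target spaces for the same reasons as in Theorem \ref{mainmulti}, and the estimate closes with the same constant structure, the error term $\gamma(\lambda)\norm{(u_0,u_1)}_{\dot H^1\times L^2}$ absorbing the soliton-interaction contributions.
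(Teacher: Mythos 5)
Your proposal is correct and takes essentially the same approach as the paper: the paper gives no explicit proof of this theorem (nor of Theorem \ref{othersinglethm}; it only remarks that ``the proof of Theorem \ref{single} only uses the asymptotics of $U$''), so the implied argument is precisely what you describe — rerun the induction on $J$ from Theorem \ref{mainmulti}, observing that every ingredient (the free exterior energy estimates, the compactness/contradiction argument, Lemma \ref{claim46}, Lemma \ref{L43}, and the linear-algebra control of the projections onto the generalized eigenfunctions of $-\Delta + \p_2\varphi(r,U)$) is invoked only through the spatial asymptotics of the profile and the dimension $N$, which (A1) and (A2) supply.

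One imprecision worth flagging, harmless for the argument but worth fixing: plugging $U\sim r^{-(N-2)}$ into $\p_2\varphi(r,\cdot) = \sum_{k\geq 2}k\varphi_k r^{(k-1)(N/2-1)-2}u^{k-1}$ gives, for the first nonzero index $k_0\geq 2$ in \eqref{vphiform}, decay $r^{-(k_0-1)(N/2-1)-2}$ — not $r^{-4}$ in general. For $k_0=2$ this is $r^{-N/2-1}$, and for the wave maps example (only odd powers, $k_0=3$) it is $r^{-N}$. The model critical NLW is special in that $k_0 = \frac{N+2}{N-2}$ gives exactly $r^{-4}$. The good news is that for integer $k_0\geq 2$ and $N\geq 6$ one always has $(k_0-1)(N/2-1)+2 \geq 4$, so the potential decays \emph{at least} as fast as $r^{-4}$; this is all that is used (membership in $L^2_t L_x^{2N/3}$ on exterior cones, the indicial roots of $-\Delta$ at infinity, and the form of Lemma \ref{L43}), so the proof closes. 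You should also note that (A2) controls $U$ only as $r\to\infty$; regularity of $U$ at $r=0$ (used in Proposition \ref{varofpars} and in Case 4 of Lemma \ref{notinterestingsingle} where the cutoff $\chi_0 T_k^0$ is built) is implicit in the ground-state ansatz and holds for the wave maps example, but it is worth stating explicitly when invoking the single-soliton chain of generalized eigenfunctions.
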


To see that the hypotheses of Theorem \ref{othersinglethm} are not vacuous, consider $k$-equivariant wave maps from Minkowski space into the sphere. Suppose that $\psi(r,t)$ solves 
\[
\p_t^2 \psi - \p_r^2 \psi - \frac{1}{r}\p_r \psi + k^2\frac{\sin(2\psi)}{2r^2} = 0.
\]
This equation has the static solutions
\[
Q_{k,\ell,\lambda}(r) = \ell k + 2\arctan(\lambda r^k)
\]
for all $\ell \in \Z.$ If we set $u$ to be defined by
\[
u(r,t) = \frac{\psi(r,t)}{r^k},
\]
then $u$ solves 
\[
\p_t^2 u -\Delta u + \vphi(r,u) = 0,
\]
where
\[
\vphi = \frac{k^2}{r^{2+k}}\left(r^k u - \frac{\sin(2r^k u)}{2}\right),
\]
which satisfies the hypotheses of the theorem.

\section{Preliminaries}
Let $\norm{u}_{L_R^p\left(\R^N\right)}$ and $\norm{u}_{\dot{H}^1_R\left(\R^N\right)}$ be the norms defined by
\[
\norm{u}_{L_R^p\left(\R^N\right)} := \left(\int_R^\infty \vert u(r)\vert^p r^{N-1} dr\right)^{\frac{1}{p}},
\]
\[
\norm{u}_{\dot{H}^1_R\left(\R^N\right)} := \left(\int_R^\infty \vert \p_r u(r)\vert^2 r^{N-1} dr\right)^{\frac{1}{2}}.
\]
Similarly, we define the exterior Strichartz norms
\[
\norm{u}_{L_t^pL_r^q(r>R+\vert t-\bar{t}\vert)} := \left(\int_\R \left(\int_{R+\vert t-\bar{t}\vert} \vert u(\bar{t},r)\vert^q r^{N-1}dr\right)^{\frac{p}{q}}d\bar{t}\right)^{\frac{1}{p}}
\]

In the case that $N\equiv 0 \mod 4,$ we define the projection $\pi_0$ as the orthogonal projection with respect to the $\dot{H}_R^1$ inner product to the subspace
\begin{equation}\label{span1}
\mathrm{Span}\left\{\frac{1}{\vert x\vert^{N-2k}}:1\leq k \leq \frac{N}{4}\right\}.
\end{equation}
Similarly, if $N\equiv 2 \mod 4,$ we define the projection $\pi_1$ as the orthogonal projection with respect to the $L_R^2$ inner product to the subspace
\begin{equation}\label{span2}
\mathrm{Span}\left\{\frac{1}{\vert x\vert^{N-2k}}:1\leq k \leq \frac{N-2}{4}\right\}.
\end{equation}
Additionally, we define the alternating norm
\[
\norm{u}_{\mathcal{A},N,R} =
\left\{
\begin{matrix}
\norm{\pi_0^\perp u}_{\dot{H}^1_R} & \mbox{ if } N \equiv 0 \mod 4\\
\norm{\pi_1^\perp u}_{L_R^2} & \mbox{ if } N \equiv 2 \mod 4
\end{matrix}
\right.
\]
where
\[
\pi_0^\perp u_0 = u_0-\pi_0 u_0
\]
and
\[
\pi_1^\perp u_1 = u_1-\pi_1 u_1.
\]
Let $\mathcal{C}_{t,R}$ be the exterior cone
\[
\mathcal{C}_{t,R} :=\{(\bar{t},\bar{r}) \in \R\times (0,\infty): \bar{r} \geq R+\vert \bar{t}-t\vert \}.
\]
We also define the exterior energy
\[
E_{t,R}=\sup_{\bar{t}\in \R} \left(\norm{(u,\p_tu)}_{\dot{H}_{R+\vert t-\bar{t}\vert}^1 \times L_{R+\vert t-\bar{t}\vert}^2} \right).
\]
We have the following exterior Strichartz estimate.
\begin{lemma}\label{extstrichartz}
Suppose that $N>3$ and that $(u_0,u_1)\in \dot{H}^1(\R^N)\times L^2(\R^N)$ is radial, and $u$ is the solution to
\[
\p_{tt}u - \Delta u = f
\]
with initial conditions $(u,\p_tu)\vert_{t=t_0}=(u_0,u_1),$ where $f \in L_t^1L_r^2$ is radial. Then for any $R_0>0,$ $t_0\in \R$ we have
\[
\norm{(u,\p_t u)}_{E_{t_0,R_0}} + \norm{u}_{L_t^2L_r^{\frac{2N}{N-3}}(C_{t_0,R_0})} \lesssim \norm{(u_0,u_1)}_{\dot{H}^1\times L^2}+\norm{f}_{L_t^1L_r^2(\mathcal{C}_{t_0,R_0})}.
\]
\end{lemma}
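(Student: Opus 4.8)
The plan is to reduce the estimate to the classical global energy and Strichartz estimates for the free wave equation, the only genuinely local ingredient being finite speed of propagation. After translating in time so that $t_0 = 0$, I would first observe that, by finite speed of propagation, the restriction of $u$ to the exterior cone $\mathcal C_{0,R_0}$ is unchanged if $f$ is replaced by $f\,\mathbf 1_{\mathcal C_{0,R_0}}$: the difference $w$ of the two solutions solves the wave equation with zero Cauchy data and forcing supported in the complement of $\mathcal C_{0,R_0}$, while for any $(t,x)$ with $|x|\ge R_0+|t|$, any point $(s,y)$ in the backward light cone from $(t,x)$ with $s$ between $0$ and $t$ satisfies $|y|\ge |x|-|t-s|\ge R_0+|t|-|t-s| = R_0+|s|$, so $(s,y)\in \mathcal C_{0,R_0}$; hence the forcing of $w$ vanishes on the entire backward light cone and $w(t,x)=0$. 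Since $\norm{f\,\mathbf 1_{\mathcal C_{0,R_0}}}_{L^1_tL^2_r}=\norm{f}_{L^1_tL^2_r(\mathcal C_{0,R_0})}$, it now suffices to prove the estimate with the \emph{global} norm $\norm{f}_{L^1_tL^2_x(\R^{1+N})}$ on the right.

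For the energy term, note that $E_{0,R_0}=\sup_{\bar t}\norm{(u,\p_t u)(\bar t)}_{\dot H^1_{R_0+|\bar t|}\times L^2_{R_0+|\bar t|}}$ is bounded by the supremum over $\bar t$ of the full $\dot H^1(\R^N)\times L^2(\R^N)$ norm, so the classical energy inequality $\norm{(u,\p_t u)(\bar t)}_{\dot H^1\times L^2}\le \norm{(u_0,u_1)}_{\dot H^1\times L^2}+\int_0^{\bar t}\norm{f(s)}_{L^2_x}\,ds$ finishes this part at once.

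For the Strichartz term I would use that $(q,r)=(2,\tfrac{2N}{N-3})$ is a non-endpoint wave-admissible pair in dimension $N>3$ carrying the $\dot H^1$ scaling: one checks $\tfrac1q+\tfrac Nr=\tfrac N2-1$ and $\tfrac1q+\tfrac{N-1}{2r}<\tfrac{N-1}4$ exactly when $N>3$ (equality holds at $N=3$, which is why the hypothesis appears). Writing $u$ via Duhamel as the free evolution of $(u_0,u_1)$ plus $\int_0^t\tfrac{\sin((t-s)\sqrt{-\Delta})}{\sqrt{-\Delta}}f(s)\,ds$, the homogeneous Strichartz estimate controls the first term by $\norm{(u_0,u_1)}_{\dot H^1\times L^2}$, and for the Duhamel term Minkowski's integral inequality together with the homogeneous estimate applied to each $f(s)$ (viewed as an initial velocity) gives a bound by $\int\norm{f(s)}_{L^2_x}\,ds=\norm{f}_{L^1_tL^2_x}$, so no Christ--Kiselev argument is needed. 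Restricting the spatial integral from $\R^N$ down to $\{r\ge R_0+|\bar t|\}$ only decreases the norm, which yields the left-hand side localized to $\mathcal C_{0,R_0}$.

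Because the energy and Strichartz inputs are classical, there is no serious analytic obstacle; the step needing the most care is the geometric verification that the backward light cone from a point of $\mathcal C_{0,R_0}$ stays inside $\mathcal C_{0,R_0}$ (which legitimizes localizing the forcing), together with keeping track of which Strichartz exponent is admissible — and noticing that admissibility forces $N>3$.
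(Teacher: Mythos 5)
Your proof is correct and takes essentially the same route as the paper: truncate the forcing to the exterior cone $\mathcal C_{t_0,R_0}$ (justified by finite speed of propagation, with the same geometric observation about backward light cones), then apply the global energy and Strichartz estimates and restrict the left-hand side to the cone. The only minor difference is that the paper additionally truncates the initial data to $\{r>R_0\}$ and re-extends it, yielding the slightly stronger bound $\norm{(u_0,u_1)}_{\dot H^1_{R_0}\times L^2_{R_0}}$ on the right (though the lemma as stated only claims the full norm), whereas you keep the original data, which is simpler and suffices for the statement.
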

\begin{proof}
Without loss of generality, suppose that $t_0=0.$ Recall that
\begin{equation}\label{keeltaostrich}
\sup_{t \in \R} \norm{(u(t),\p_tu(t))}_{\dot{H}^1\times L^2} + \norm{u}_{L_t^2L_r^{\frac{2N}{N-3}}} \lesssim \norm{(u_0,u_1)}_{\dot{H}^\times L^2} +\norm{f}_{L_t^1L_r^2}
\end{equation}
(see \cite[corollary 1.3]{keeltao}). Let $(\tilde{u}_0,\tilde{u}_1)$ be any extension of $(u_0,u_1)1_{r>R_0}$ such that $(\tilde{u}_0,\tilde{u}_1)\in \dot{H}^1\times L^2$ with
\[
\norm{(\tilde{u}_0,\tilde{u}_1)}_{\dot{H}^1\times L^2} \lesssim \norm{(u_0,u_1)}_{\dot{H}_{R_0}^1\times L^2_{R_0}}.
\]
We also let $\tilde{f} = f 1_{C_{0,R_0}}.$ Note that $\tilde{f} \in L_t^1L_r^2.$ Letting $\tilde{u}$ be the solution to
\[
\left\{
\begin{matrix}
\p_t^2 \tilde{u} - \Delta \tilde{u} = \tilde{f},\\
(u,\p_u)\vert_{t=t_0} = (\tilde{u}_0,\tilde{u}_1)
\end{matrix}
\right.
\]
we see that by finite propagation speed we have $u(x,t) = \tilde{u}(x,t)$ for all $(x,t)\in C_{0,R_0}.$ Then we have by \eqref{keeltaostrich} that
\begin{eqnarray*}
\norm{(u,\p_t u)}_{E_{t_0,R_0}} + \norm{u}_{L_t^2L_r^{\frac{2N}{N-3}}}&\lesssim& \sup_{t \in \R} \norm{(\tilde{u}(t),\p_t\tilde{u}(t))}_{\dot{H}^1\times L^2} + \norm{\tilde{u}}_{L_t^2L_r^{\frac{2N}{N-3}}}\\
&\lesssim& \norm{(\tilde{u}_0,\tilde{u}_1)}_{\dot{H}^\times L^2} +\norm{\tilde{f}}_{L_t^1L_r^2}\\
&\lesssim &\norm{(u_0,u_1)}_{\dot{H}^1\times L^2}+\norm{f}_{L_t^1L_r^2(\mathcal{C}_{t_0,R_0})}.
\end{eqnarray*}
This completes the proof.
\end{proof}
We will also need the following exterior energy estimate from \cite{higherchan}.
\begin{prop}\label{2.3}
Suppose $N$ is even, and let $u\in \R\times \R^N$ solve
\[
\p_{tt}u-\Delta u = f,
\]
with initial conditions $(u,\p_t u)\vert_{t=0}=(u_0,u_1)$ where $u_0,u_1,$ and $f$ are radial. Then we have
\begin{equation}\label{extest}
\norm{u}_{\mathcal{A},N,R} \lesssim \lim_{t \to \infty} \norm{(u(t),\p_tu(t))}_{\dot{H}_{R+\vert t\vert}^1(\R^N)\times L_{R+\vert t\vert}^2(\R^N)} + \norm{f}_{L_t^1L_r^2(\mathcal{C}_{0,R})}.
\end{equation}
\end{prop}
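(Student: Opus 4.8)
The plan is to reduce \eqref{extest} to the corresponding estimate for the \emph{homogeneous} free wave equation and then absorb the forcing term using Lemma~\ref{extstrichartz}. Write $u = u_F + w$, where $u_F$ solves $\p_{tt}u_F-\Delta u_F = 0$ with Cauchy data $(u_F,\p_t u_F)\vert_{t=0}=(u_0,u_1)$ and $w$ solves $\p_{tt}w-\Delta w = f$ with $(w,\p_t w)\vert_{t=0}=(0,0)$. Since the alternating norm $\norm{\,\cdot\,}_{\mathcal{A},N,R}$ depends only on the Cauchy data at $t=0$ — it equals $\norm{\pi_0^\perp u_0}_{\dot{H}^1_R}$ when $N\equiv 0 \bmod 4$ and $\norm{\pi_1^\perp u_1}_{L^2_R}$ when $N\equiv 2 \bmod 4$ — we have $\norm{u}_{\mathcal{A},N,R}=\norm{u_F}_{\mathcal{A},N,R}$. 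Hence it suffices to prove \eqref{extest} with $u$ replaced by $u_F$ on both sides and then substitute $u_F = u - w$ on the right.

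For the homogeneous piece one invokes the sharp exterior energy estimate for radial free waves in even dimensions,
\[
\norm{u_F}_{\mathcal{A},N,R} \lesssim \lim_{t\to\infty}\norm{(u_F(t),\p_tu_F(t))}_{\dot{H}^1_{R+\vert t\vert}(\R^N)\times L^2_{R+\vert t\vert}(\R^N)},
\]
which is the $f=0$ case (established in \cite{higherchan}; by finite speed of propagation and the scaling invariance of all the norms involved one may normalize $R=1$). The substance of this estimate is that the finitely many homogeneous profiles $\vert x\vert^{-(N-2k)}$ appearing in \eqref{span1} (resp. \eqref{span2}) are precisely the directions in the Cauchy data along which the outgoing energy as $t\to+\infty$ degenerates, and that on the orthogonal complement the map (Cauchy data)$\,\mapsto\,$(outgoing energy) is coercive. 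A natural route is induction on the dimension: the intertwining identities relating radial solutions in dimension $N$ to those in dimension $N+2$ (via $\tfrac{1}{r}\p_r$ and multiplication by suitable powers of $r$) reduce matters to the base cases $N=2,4$, where one argues directly from the Fourier--Bessel (equivalently, after the method of descent, d'Alembert-type) representation of radial free solutions, tracking the asymptotic expansion of the propagator that produces exactly the power and logarithmic corrections recorded by the spans \eqref{span1}, \eqref{span2}. This homogeneous estimate — in particular the identification of the exact list of failure directions together with coercivity on their complement — is the main obstacle; the rest is a soft reduction.

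Finally, for the forcing term, restrict $f$ to the cone $\mathcal{C}_{0,R}$: by finite speed of propagation the restriction of $w$ to $\mathcal{C}_{0,R}$ is unaffected by $f$ outside $\mathcal{C}_{0,R}$, so Lemma~\ref{extstrichartz} applied with $t_0=0$, $R_0=R$ and zero initial data gives
\[
\sup_{\bar t\in\R}\norm{(w(\bar t),\p_t w(\bar t))}_{\dot{H}^1_{R+\vert \bar t\vert}\times L^2_{R+\vert \bar t\vert}} = \norm{(w,\p_t w)}_{E_{0,R}} \lesssim \norm{f}_{L_t^1L_r^2(\mathcal{C}_{0,R})},
\]
which in particular controls $\lim_{t\to\infty}\norm{(w(t),\p_t w(t))}_{\dot{H}^1_{R+\vert t\vert}\times L^2_{R+\vert t\vert}}$. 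Combining the triangle inequality
\[
\lim_{t\to\infty}\norm{(u_F,\p_tu_F)}_{\dot{H}^1_{R+\vert t\vert}\times L^2_{R+\vert t\vert}} \le \lim_{t\to\infty}\norm{(u,\p_tu)}_{\dot{H}^1_{R+\vert t\vert}\times L^2_{R+\vert t\vert}} + \sup_{\bar t\in\R}\norm{(w,\p_tw)}_{\dot{H}^1_{R+\vert \bar t\vert}\times L^2_{R+\vert \bar t\vert}}
\]
with the homogeneous estimate applied to $u_F$ and with $\norm{u}_{\mathcal{A},N,R}=\norm{u_F}_{\mathcal{A},N,R}$ yields \eqref{extest}.
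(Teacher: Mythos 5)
Your reduction is correct, and since the paper does not prove Proposition~\ref{2.3} itself but simply imports it from \cite{higherchan}, there is no internal proof to compare against; what you have written is the natural way to obtain the inhomogeneous statement from the homogeneous one. The central observation — that $\norm{u}_{\mathcal{A},N,R}$ depends only on $(u_0,u_1)$ and hence equals $\norm{u_F}_{\mathcal{A},N,R}$ for the free evolution $u_F$ of the same data — is exactly right, and combining the homogeneous exterior energy estimate of \cite{higherchan} with the zero-data exterior Strichartz bound from Lemma~\ref{extstrichartz} via the triangle inequality closes the argument (one should write $\limsup$ for the term involving $u$ if the limit is not known \emph{a priori} to exist, but this is cosmetic). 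Your parenthetical sketch of how one proves the $f=0$ estimate — intertwining reduction in the dimension and Fourier--Bessel analysis at the base cases — is a fair description of the strategy in \cite{higherchan}, but of course that is being cited rather than reproven here, which matches what the paper does.
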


\section{Single soliton case (non resonant component)}\label{firsthalfsec}

We recall that $\Lambda W$ solves
\[
-\Delta(\Lambda W)+ V(\Lambda W) = 0.
\]
We will say that $f(r)\sim g(r)$ if $r \to a,$ with $a \in \R\cup\{\pm \infty\}$ if $\frac{f(r)}{g(r)}\to C$ for some constant $C$ as $r \to a.$
\begin{lemma}
There exists a solution $\Gamma$ to
\[
-\Delta \Gamma + V \Gamma = 0,
\]
where the Wronskian is given by
\[
\Gamma(r)\p_r(\Lambda W)(r) - \Lambda W(r)\p_r\Gamma(r) =r^{-(N-1)}
\]
and $\Gamma$ has the asymptotics
\[
\Gamma(r)\sim c_1r^{-(N-2)}\quad \mbox{as }r\to0, \; \Gamma(r)\sim c_2\quad \mbox{as }r\to\infty
\]
for some dimensional constants $c_1,c_2.$
\end{lemma}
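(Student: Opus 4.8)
The plan is to obtain $\Gamma$ from the known solution $\Lambda W$ by reduction of order, with care taken around the unique positive zero of $\Lambda W$. A direct computation gives the explicit form $\Lambda W(r) = \tfrac{N-2}{2}(1-ar^2)(1+ar^2)^{-N/2}$ with $a = \tfrac{1}{N(N-2)}$, so $\Lambda W$ vanishes exactly at $r_0 := \sqrt{N(N-2)}$, is positive on $(0,r_0)$, tends to $\tfrac{N-2}{2}$ as $r\to 0$, and is a nonzero multiple of $r^{-(N-2)}$ as $r\to\infty$. On $(0,r_0)$ I would set $\Gamma(r) := -\Lambda W(r)\int_{r_1}^{r}\frac{ds}{s^{N-1}\Lambda W(s)^{2}}$ for a fixed $r_1\in(0,r_0)$. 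Writing $\Gamma = \Lambda W\cdot J$ and substituting into $\partial_r^2 + \tfrac{N-1}{r}\partial_r - V$, the term proportional to $J$ cancels since $\Lambda W$ solves the equation, and what remains is equivalent to $(r^{N-1}\Lambda W^{2} J')' = 0$; this both verifies that $\Gamma$ solves $-\Delta\Gamma + V\Gamma = 0$ on $(0,r_0)$ and yields, after a one-line computation, $\Gamma\,\partial_r(\Lambda W) - \Lambda W\,\partial_r\Gamma = r^{-(N-1)}$ (the leading minus sign is inserted precisely to make this come out $+r^{-(N-1)}$ rather than $-r^{-(N-1)}$).

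Next I would extend $\Gamma$ to all of $(0,\infty)$. The only singular points of the ODE are $r=0$ and $r=\infty$; the point $r_0$ is ordinary, so the solution on $(0,r_0)$ continues uniquely to a solution on $(0,\infty)$ (equivalently, the reduction-of-order formula has a removable singularity at $r_0$). By Abel's identity the Wronskian of $\Gamma$ and $\Lambda W$ is a constant multiple of $r^{-(N-1)}$ on all of $(0,\infty)$, and this constant is $1$ by the previous paragraph, so the prescribed Wronskian relation holds globally; in particular $\Gamma$ and $\Lambda W$ are linearly independent.

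For the behavior near $0$: since $\Lambda W$ is smooth with $\Lambda W(s)\to\tfrac{N-2}{2}$, one has $\frac{1}{s^{N-1}\Lambda W(s)^{2}} = \tfrac{4}{(N-2)^2}s^{1-N}(1+O(s^2))$, and the $O(s^2)$ correction integrates to a term of order strictly smaller than $r^{2-N}$ (here $N\ge 8$ is more than enough), so $\int_{r_1}^{r}\frac{ds}{s^{N-1}\Lambda W(s)^{2}} = -\tfrac{4}{(N-2)^3}r^{2-N}(1+o(1))$ and hence $\Gamma(r)\sim c_1 r^{-(N-2)}$ with $c_1 = \tfrac{2}{(N-2)^2}\neq 0$. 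For the behavior near $\infty$, I would use that on $(r_0,\infty)$ any solution with the same Wronskian against $\Lambda W$ as $\Gamma$ differs from $-\Lambda W(r)\int_{r_2}^{r}\frac{ds}{s^{N-1}\Lambda W(s)^{2}}$ (for a base point $r_2>r_0$) by a constant multiple $m\,\Lambda W$. Since $\Lambda W(s) = c_\infty s^{2-N}(1+O(s^{-2}))$ with $c_\infty\neq 0$, we get $\frac{1}{s^{N-1}\Lambda W(s)^{2}} = c_\infty^{-2}s^{N-3}(1+O(s^{-2}))$; because $N\ge 8$ forces $N-3>-1$, the integral genuinely grows like $\tfrac{c_\infty^{-2}}{N-2}r^{N-2}$, giving $\Gamma(r)\to -\tfrac{1}{c_\infty(N-2)} =: c_2\neq 0$ (the $m\,\Lambda W$ term being $O(r^{2-N})=o(1)$).

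The main obstacle is the zero of $\Lambda W$ at $r_0$: it makes the reduction-of-order integral honestly singular, which is why the construction must proceed in two stages — define $\Gamma$ by the explicit formula on $(0,r_0)$, then continue it across $r_0$ by ODE theory — and why one must re-express $\Gamma$ using a base point on the far side of $r_0$ to read off the asymptotics at infinity. The remaining care is bookkeeping: checking that $c_1,c_2$ are nonzero (automatic from linear independence of $\Gamma$ and $\Lambda W$) and that the error terms in the two asymptotic integrals are of strictly lower order, which uses the explicit asymptotics of $\Lambda W$ — equivalently, that $V$ is bounded near $0$ and $V = O(r^{-4})$ at infinity — together with $N\ge 8$.
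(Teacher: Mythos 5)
Your construction is the same reduction-of-order argument the paper uses: the paper simply writes
$\Gamma(r) = -\Lambda W(r)\int_1^r s^{-(N-1)}\Lambda W(s)^{-2}\,ds$, verifies the Wronskian identity, and reads the asymptotics off those of $\Lambda W$. What you add — and this is a genuine, worthwhile clarification — is the explicit treatment of the simple zero of $\Lambda W$ at $r_0=\sqrt{N(N-2)}$. As written, the paper's integral does not converge once $r$ passes $r_0$ (the integrand has a nonintegrable $(s-r_0)^{-2}$ singularity), so the formula only literally defines $\Gamma$ on $(0,r_0)$; your two-stage argument — define $\Gamma$ by the formula on $(0,r_0)$, extend uniquely across the ordinary point $r_0$ by ODE theory, then re-express $\Gamma$ with a base point $r_2>r_0$ (up to a harmless multiple of $\Lambda W$) to read off the behavior at infinity — is exactly the right way to make the paper's construction rigorous without changing its substance. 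Your use of Abel's identity to propagate the Wronskian normalization, and the observation that the $m\,\Lambda W$ ambiguity is $O(r^{2-N})$ and hence does not affect the limit $c_2$, are both correct. The asymptotic computations at $0$ and $\infty$ match the paper's \eqref{Lambdaasymp}--\eqref{gammaasymp}. So: same approach, slightly more careful execution; no gaps.
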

\begin{proof}
We let
\[
\Gamma(r) = -\Lambda W(r)\int_1^r s^{-(N-1)}(\Lambda W)^{-2}(s)ds.
\]
Then we have
\[
-\Delta \Gamma + V \Gamma = 0,
\]
and for all $r>0,$
\[
\Gamma(r)\p_r(\Lambda W)(r) - \Lambda W(r)\p_r\Gamma(r) =r^{-(N-1)}.
\]
Since
\[
\Lambda W(r) = \frac{N-2}{2}\left(\frac{1-\frac{r^2}{N(N-2)}}{1+\frac{r^2}{N(N-2)}}\right)W(r),
\]
we have
\begin{equation}\label{Lambdaasymp}
\Lambda W \sim \frac{N-2}{2} \quad \mbox{as } r\to0,\; \Lambda W \sim -\frac{N-2}{2}\frac{(N(N-2))^{\frac{N-2}{2}}}{r^{N-2}} \quad \mbox{as }r \to \infty
\end{equation}
and so
\begin{equation}\label{gammaasymp}
\Gamma(r)\sim c_1r^{-(N-2)}\quad \mbox{as }r\to0, \; \Gamma(r)\sim c_2\quad \mbox{as }r\to\infty.
\end{equation}
\end{proof}

Let $T_0^\infty:=\Lambda W,$ $T_0^0:=\Gamma.$

\begin{prop}\label{varofpars}
Suppose that $N\geq 6.$ There exist $T_k^\infty, T_k^0$ for $1\leq k \leq \frac{N-6}{2}$ such that
\[
-\Delta T_{k+1}^0 + VT_{k+1}^0 = -T_k^0
\]
\[
-\Delta T_{k+1}^\infty + VT_{k+1}^\infty = -T_k^\infty
\]
and we have the asymptotics
\begin{equation}\label{asympind1}
T_k^0(r)\sim r^{a_k^0}\quad \mbox{as }r\to0,\;T_k^0(r)\sim r^{b_k^0}\quad \mbox{as }r\to\infty,
\end{equation}
\begin{equation}\label{asympind2}
T_k^\infty(r)\sim r^{a_k^\infty}\quad \mbox{as }r\to0,\;T_k^\infty(r)\sim r^{b_k^\infty}\quad \mbox{as }r\to\infty,
\end{equation}
where $a_k^0,b_k^0,a_k^\infty,b_k^\infty$ are given by
\[
a_k^0 =-N+2+2k, b_k^0 = 2k,
\]
\[
a_k^\infty= -N+2, b_k^\infty=-N+2+2k
\]
if $k > 0,$ and if $k=0$ we have
\[
a_0^0 = -N+2,\; b_0^0 = 0,\; a_0^\infty = 0, \; b_0^\infty = -N+2.
\]
\end{prop}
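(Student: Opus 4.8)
The plan is to construct the families $T_k^0$ and $T_k^\infty$ iteratively by variation of parameters, starting from the base pair $T_0^0 = \Gamma$ and $T_0^\infty = \Lambda W$, whose asymptotics are recorded in \eqref{Lambdaasymp} and \eqref{gammaasymp} and match the claimed exponents $a_0^0 = -N+2$, $b_0^0 = 0$, $a_0^\infty = 0$, $b_0^\infty = -N+2$. Given $T_k^0$ (resp.\ $T_k^\infty$), I would define $T_{k+1}^0$ (resp.\ $T_{k+1}^\infty$) by the standard variation-of-parameters formula for the inhomogeneous ODE $-\Delta w + Vw = -T_k^{0}$, using the two homogeneous solutions $\Lambda W$ and $\Gamma$ and the Wronskian identity $\Gamma \p_r(\Lambda W) - \Lambda W \p_r \Gamma = r^{-(N-1)}$ already established. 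Concretely, one writes
\[
T_{k+1}^{0}(r) = \Lambda W(r)\int_{r_1}^r s^{N-1}\Gamma(s) T_k^{0}(s)\,\frac{ds}{s^{N-1}\cdot(\text{Wronskian}\cdot s^{N-1})^{-1}} - \Gamma(r)\int_{r_2}^r s^{N-1}\Lambda W(s) T_k^{0}(s)\,(\cdots)\,ds,
\]
with the limits of integration $r_1, r_2 \in \{0, 1, \infty\}$ chosen at each stage to kill the unwanted homogeneous contributions and produce the correct leading behavior at both ends; since the Wronskian is $r^{-(N-1)}$, the variation-of-parameters kernel is simply $r^{N-1}$ times the product of the two solutions, which keeps the bookkeeping clean.

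The heart of the argument is the induction on the asymptotic exponents. At $r \to \infty$, write $T_k^0 \sim r^{2k}$ and note that multiplying by the kernel and integrating a solution against $\Lambda W \sim r^{-(N-2)}$ or against $\Gamma \sim \text{const}$ raises the power in a controlled way; one checks that the dominant term in $T_{k+1}^0$ comes from the $\Gamma(r)\int^r (\cdots) \Lambda W \, T_k^0$ piece and scales like $r^0 \cdot r^{\,1 + (N-1) - (N-2) + 2k} = r^{2k+2} = r^{2(k+1)}$, matching $b_{k+1}^0 = 2(k+1)$; similarly near $r \to 0$ one gets $T_{k+1}^0 \sim r^{-N+2+2(k+1)}$ from the other term, matching $a_{k+1}^0$. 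The same computation with the roles of the two solutions interchanged gives the $T_k^\infty$ asymptotics $a_k^\infty = -N+2$ for all $k \geq 1$ (the near-origin behavior saturates at the $\Gamma$-type exponent) and $b_k^\infty = -N+2+2k$. One must be slightly careful that $V$ decays like $r^{-4}$ at infinity so the homogeneous solutions really do have pure power asymptotics rather than power-times-log; in even dimensions this is where logarithmic corrections could in principle appear, so I would verify that at each of the finitely many steps $1 \leq k \leq \frac{N-6}{2}$ the relevant exponent differences are not resonant (i.e.\ the integrand's power is not exactly $-1$), which is exactly what the explicit formulas $a_k^0, b_k^0$, etc.\ encode — the gaps are always even integers, and since $N$ is even and $k$ ranges only up to $\frac{N-6}{2}$ none of the critical cancellations occur.

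The main obstacle I anticipate is precisely this resonance/logarithm issue at the endpoints: when integrating $s^{N-1}\Gamma(s) T_k^\infty(s) \sim s^{N-1} \cdot s^{-(N-2)} \cdot s^{b_k^\infty}$ near infinity, the exponent is $1 + b_k^\infty = -N+3+2k$, and one needs this to stay away from $-1$, i.e.\ $2k \neq N-4$, which holds throughout the allowed range $k \leq \frac{N-6}{2}$; the borderline case $k = \frac{N-4}{2}$ is deliberately excluded, and this is the structural reason the construction stops at $\frac{N-6}{2}$. I would organize the proof as: (i) state the variation-of-parameters formula and the choice of integration limits; (ii) prove by induction the asymptotics at $r \to 0$; (iii) prove by induction the asymptotics at $r \to \infty$; (iv) at each step verify the integrands are non-resonant so the power-law asymptotics are genuine. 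The verification of the ODE $-\Delta T_{k+1}^{0} + V T_{k+1}^{0} = -T_k^{0}$ is then automatic from the variation-of-parameters construction and the Wronskian identity, so no separate work is needed there.
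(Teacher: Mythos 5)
Your proposal is correct and takes essentially the same route as the paper: the paper sets $T_0^0=\Gamma$, $T_0^\infty=\Lambda W$, defines exactly the variation-of-parameters operators $\mathcal{S}_0,\mathcal{S}_\infty$ you describe (with integration limits at $0$, $1$, or $\infty$ chosen to pick out the desired growth/decay), sets $T_k^\infty=-\mathcal{S}_\infty T_{k-1}^\infty$ and $T_k^0=-\mathcal{S}_0 T_{k-1}^0$, and propagates the power-law asymptotics by induction. One small slip in your sketch: near infinity it is $\Lambda W$, not $\Gamma$, that decays like $s^{-(N-2)}$ (the factor $T_0^\infty$ in the $\int_r^\infty$ kernel), though the resulting integrand exponent $1+b_k^\infty$ and the non-resonance condition $2k\neq N-4$ you derive from it are nevertheless correct and do correctly identify why the construction stops at $k=\tfrac{N-6}{2}$.
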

\begin{proof}
We prove the result by induction. Suppose by induction that \eqref{asympind1}, \eqref{asympind2} hold for $k.$ Let $\mathcal{S}_0, \mathcal{S}_\infty$ denote the operators defined by
\[
\mathcal{S}_0f(r) :=T_0^0(r)\int_0^r \rho^{N-1} T_0^\infty(\rho) f(\rho)d\rho+T_0^\infty(r) \int_1^r \rho^{N-1} T_0^0(\rho)f(\rho)d\rho.
\]
\[
\mathcal{S}_\infty f(r) := -T_0^0(r)\int_r^\infty \rho^{N-1} T_0^\infty(\rho) f(\rho)d\rho+T_0^\infty(r) \int_1^r \rho^{N-1} T_0^0(\rho)f(\rho)d\rho.
\]
Then let
\[
T_k^\infty = -\mathcal{S}_\infty (T_{k-1}^\infty), \;T_k^0 = -\mathcal{S}_0(T_{k-1}^0).
\]
Note that if $f\sim r^\alpha$ as $r \to0,$ $f\sim r^\beta$ as $r \to\infty,$ then
\[
\mathcal{S}_0 f \sim r^{\alpha+2}, \mbox{ as }r \to 0,\; \mathcal{S}_0 f \sim C+r^{\beta+2}\mbox{ as }r \to \infty
\]
if $\alpha > -N,$ $\alpha+2 < 0,$ $\beta>-2.$ Similarly,
\[
\mathcal{S}_\infty f \sim r^{\alpha+2} + r^{-(N-2)} + C \mbox{ as }r \to 0,\; \mathcal{S}_\infty f \sim \frac{1}{r^{N-2}}+r^{2+\beta} \mbox{ as }r\to \infty
\]
if $\alpha>-N,$ $\beta < -2.$

Therefore
\[
T_{k+1}^0 \sim r^{\min\{-(N-2),a_k^0+2\}}\quad \mbox{as }r\to0,\;  T_{k+1}^0 \sim r^{b_k^0+2}\quad \mbox{as }r\to\infty
\]
and
\[
T_{k+1}^\infty \sim r^{\min\{-(N-2),a_k^\infty+2\}}\quad \mbox{as }r\to0,\;  T_{k+1}^\infty \sim r^{b_k^\infty+2}\quad \mbox{as }r\to\infty.
\]
By using the induction hypothesis this completes the proof, since \eqref{Lambdaasymp}, \eqref{gammaasymp} prove the case $k=0.$
\end{proof}
\begin{lemma}\label{decompo}
Let $T_k^\infty, T_k^0$ be defined as in Lemma \ref{varofpars}. There exist coefficients $e_k^i$ such that
\[
\tilde{T}_k^0 := T_k^0 - \left(e_0^k T_0^\infty+\cdots + e_{k+1}^kT_{k+1}^\infty\right) \sim r^2 \mbox{ as } r\to 0.
\]
\end{lemma}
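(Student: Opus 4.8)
The plan is to peel off, one power at a time, the "bad" behavior of $T_k^0$ near the origin by subtracting suitable multiples of the functions $T_j^\infty$, exploiting the fact that near $r=0$ the $T_j^\infty$ with $j\geq 1$ behave like $r^{-(N-2)}$ while $T_0^\infty\sim 1$ and $\Gamma$-type corrections generate the other negative powers. More precisely, from Proposition~\ref{varofpars} we know $T_k^0\sim r^{-N+2+2k}$ as $r\to 0$ for $k>0$, so its leading singularity at the origin is a single power $r^{-N+2+2k}$; we want to kill all the powers appearing in the asymptotic expansion of $T_k^0$ that are $<2$, i.e. the powers $-N+2+2j$ for $0\leq j\leq k$ (together with the constant term $r^0$ coming from the $T_0^\infty$-type contribution). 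The key observation is that the operators $\mathcal S_0,\mathcal S_\infty$ used in the construction not only produce the leading power but, upon iteration, generate a full asymptotic series in powers $r^{-N+2},r^{-N+4},\dots$ and $r^0,r^2,r^4,\dots$ near the origin, and the functions $T_j^\infty$ have matching expansions; so finitely many of them span the space of "obstruction powers."

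The steps I would carry out, in order. First, upgrade the asymptotics of Proposition~\ref{varofpars} from a single leading term "$\sim r^{a}$" to an honest finite asymptotic expansion near $r=0$: show $T_k^0(r)=\sum_{j} c_{k,j} r^{-N+2+2j} + (\text{nonnegative even powers}) + o(r^2)$, and likewise $T_j^\infty(r)= d_j r^{-N+2} + \dots$ for $j\geq 1$ and $T_0^\infty(r)=\tfrac{N-2}{2}+O(r^2)$. This is a routine strengthening of the induction already performed, tracking lower-order terms produced by $\mathcal S_0,\mathcal S_\infty$ acting on power functions. Second, set up a linear-algebra argument: the family $\{T_0^\infty,\dots,T_{k+1}^\infty\}$ has $k+2$ members, and near $r=0$ their expansions involve the $k+2$ "dangerous" exponents (the negative powers $-N+2,-N+4,\dots,-N+2k$ — there are $k$ of these for the generic range — plus the exponent $0$ from $T_0^\infty$, plus possibly a $\log$-type term in the even-dimensional setting that one must also match). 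Arrange these expansions as a square matrix $M$ whose entry is the coefficient of the $i$-th dangerous power in $T_j^\infty$; one checks $M$ is invertible (it is essentially triangular, since $T_j^\infty$ first contributes at exponent $-N+2$ for all $j\geq 1$ but the coefficients at successive even powers are nonzero and distinct across $j$ — this is where one uses the explicit structure of $\mathcal S_\infty$). Third, solve $M e = v$, where $v$ is the vector of coefficients of the dangerous powers in $T_k^0$; the resulting $e^k_0,\dots,e^k_{k+1}$ are exactly the claimed coefficients, and by construction $\tilde T_k^0 = T_k^0 - \sum_j e^k_j T_j^\infty$ has all dangerous-power coefficients vanishing, hence $\tilde T_k^0\sim r^2$ as $r\to 0$.

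The main obstacle I anticipate is the bookkeeping at the exponent $-N+2$ (the origin of the degeneracy): all of $T_1^\infty,\dots,T_{k+1}^\infty$ share the same leading power $r^{-N+2}$ there, so subtracting them to cancel a single power of $T_k^0$ requires knowing that the \emph{higher} coefficients (at $r^{-N+4}, r^{-N+6},\dots$) of the $T_j^\infty$ are genuinely different — i.e. that the matrix $M$ really is nonsingular rather than just upper-triangular-with-repeated-leading-entry. I expect this to follow from an explicit computation of the next-order term produced by $\mathcal S_\infty$: applying $\mathcal S_\infty$ to $r^\alpha$ yields $r^{\alpha+2}$ with a coefficient like $\bigl((\alpha+2)(\alpha+N) \bigr)^{-1}$ (up to the potential $V$, which contributes only higher-order corrections near $0$), and these coefficients are nonzero and shift with $j$, giving the needed invertibility; one must be slightly careful that none of the relevant denominators vanish, which is guaranteed by the dimensional constraint $N\geq 8$ and the range $k\leq \frac{N-6}{2}$. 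A secondary subtlety, specific to even dimensions, is whether a resonant $\log r$ term is produced when $\alpha+2$ hits an exponent already present; if so, it must be included as an extra row/column of $M$, and one checks the count still works out.
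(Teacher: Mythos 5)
Your plan (strengthen the leading-order asymptotics of Proposition~\ref{varofpars} to a full expansion at the origin, then solve a linear system matching the coefficients of the ``dangerous'' powers) does not work as described, and the obstruction is not one of the two subtleties you flag but rather a counting/rank problem that they presuppose away.

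You assert that $T_k^0$ contributes roughly $k+2$ dangerous terms, so that $\{T_0^\infty,\dots,T_{k+1}^\infty\}$ supplies exactly enough free parameters and the system is square. But this is not what the expansions look like. Since $T_0^0=\Gamma=-\Lambda W(r)\int_1^r s^{-(N-1)}(\Lambda W)^{-2}(s)\,ds$, and $(\Lambda W)^{-2}$ is smooth and even at the origin, $\Gamma$ carries \emph{every} even negative power $r^{-(N-2)},r^{-(N-4)},\dots,r^{-2}$, together with a $\log r$ term (the $j=\tfrac{N-2}{2}$ Taylor coefficient produces $s^{-1}$) and a constant. The same is true of each $T_j^\infty$ with $j\geq 1$, because $-\mathcal S_\infty$ contributes a piece proportional to $T_0^0(r)\int_r^\infty(\cdot)$, which equals (a constant times) $\Gamma$ plus $O(r^2)$ near the origin. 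So the dangerous part of the span you want to cancel against has dimension on the order of $\tfrac{N}{2}$, independent of $k$, whereas you only have $k+2$ coefficients --- and $k+2\leq\tfrac{N-2}{2}$ throughout the allowed range. Already at $k=0$ (the base case), $T_0^0$ has $\tfrac{N}{2}$-ish dangerous terms but you subtract only two functions. Your matrix $M$ is therefore neither square nor invertible; it is small and the associated system is over-constrained. The statement is saved only by a strong structural degeneracy: the dangerous parts of all the $T_j^\infty$ for $j\geq 1$ are collinear (each is a scalar multiple of $\Gamma$'s dangerous part, up to $O(r^2)$), and the dangerous part of $T_k^0$ happens to lie in the same one-dimensional direction. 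That consistency is exactly what has to be proved; nothing in your outline supplies it, and your proposed ``triangularity'' heuristic (distinct next-order coefficients of $\mathcal S_\infty$ acting on powers) points in the opposite direction from what is actually true.

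The paper's proof sidesteps all of this. Instead of asymptotic matching, it uses two operator facts: $\mathcal S_\infty T_i^\infty=-T_{i+1}^\infty$, and $(\mathcal S_0-\mathcal S_\infty)f = T_0^0(r)\int_0^\infty \rho^{N-1}T_0^\infty f\,d\rho$ is a rank-one operator landing on $T_0^0$. Writing $T_{k+1}^0=-\mathcal S_0(T_k^0)$ and substituting the inductive decomposition $T_k^0=\tilde T_k^0+\sum_i e_i^k T_i^\infty$, one collects terms so that the output of $\mathcal S_\infty$ is a linear combination of $T_i^\infty$'s, the rank-one discrepancy is a multiple of $T_0^0$ (which is then re-expressed via the base case as $\tilde T_0^0$ plus $T_0^\infty,T_1^\infty$), and the remaining piece $\mathcal S_0(\tilde T_k^0)$ is controlled using only that $\tilde T_k^0=O(r^2)$. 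No expansion beyond leading order is ever computed, and no matrix inversion is needed. If you want to repair your approach, you would need to first establish the collinearity/consistency statement above --- at which point you have essentially rediscovered the operator identity and might as well use the paper's inductive argument directly.
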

\begin{proof}
We will prove this by induction. Note that
\[
0 <\int_0^\infty \rho^{N-1} (T_0^\infty(\rho))^2 d\rho < \infty,
\]
since $T_0^\infty \sim 1$ as $r \to 0$ and $T_0^\infty \sim r^{-(N-2)}$ as $r\to \infty.$ Letting
\[
e_1^0 : = -\left(\int_0^\infty \rho^{N-1} (T_0^\infty(\rho))^2 d\rho \right)^{-1}
\]
and
\[
e_0^0 = e_1^0 \int_0^1 \rho^{N-1}T_0^0(\rho)T_0^\infty(\rho)d\rho
\]
which is well defined since $\rho^{N-1}T_0^0(\rho)T_0^\infty(\rho) \sim \rho$ for $\rho \to 0,$ we see that
\begin{eqnarray*}
T_0^0 - e_1^0 T_1^\infty &=& T_0^0 - e_1^0\left((e_1^0)^{-1}T_0^0+T_0^0(r)\int_0^r \rho^{N-1} (T_0^\infty(\rho))^2 d\rho+T_0^\infty(r) \int_1^r \rho^{N-1} T_0^0(\rho)T_0^\infty(\rho)d\rho\right)\\
&=& -e_1^0\left(T_0^0(r)\int_0^r \rho^{N-1} (T_0^\infty(\rho))^2 d\rho+T_0^\infty(r) \int_1^r \rho^{N-1} T_0^0(\rho)T_0^\infty(\rho)d\rho\right)
\end{eqnarray*}
and thus
\begin{eqnarray*}
T_0^0 - e_0^0 T_0^\infty - e_1^0 T_1^\infty & = & -e_1^0\left(T_0^0(r)\int_0^r \rho^{N-1} (T_0^\infty(\rho))^2 d\rho+T_0^\infty(r) \int_0^r \rho^{N-1} T_0^0(\rho)T_0^\infty(\rho)d\rho\right)\\
&\sim & r^{-(N-2)} \int_0^r \rho^{N-1} d\rho + \int_0^r \rho d\rho \sim r^2.
\end{eqnarray*}
This proves the base case. For the general case, suppose that $e_i^k$ for $0 \leq i \leq k+1$ have all been defined. Then we have
\begin{eqnarray*}
T_{k+1}^0 &=& -\mathcal{S}_0(T_k^0)\\
&=& \mathcal{S}_0 \left(\tilde{T}_k^0 + e_0^k T_0^\infty + \cdots + e_{k+1}^k T_{k+1}^\infty\right)\\
& = & \mathcal{S}_0(\tilde{T}_k^0) + \sum_{i=0}^{k+1} e_i^k \mathcal{S}_\infty (T_i^\infty) +  \sum_{i=0}^{k+1} e_i^k (\mathcal{S}_0-\mathcal{S}_\infty)T_i^\infty\\
&=& \mathcal{S}_0(\tilde{T}_k^0) - \sum_{i=1}^{k+2} e_{i-1}^k T_i^\infty + \sum_{i=0}^{k+1} e_i^k T_0^0(r) \int_0^\infty \rho^{N-1} T_0^\infty(\rho) T_k^\infty(\rho) d\rho,
\end{eqnarray*}
where we note that $\mathcal{S}_0(\tilde{T}_k^0)$ is well defined by the induction hypothesis and where
\[
\int_0^\infty \rho^{N-1} T_0^\infty(\rho) T_k^\infty(\rho) d\rho
\]
is well defined by Lemma \ref{varofpars}. This implies that
\begin{eqnarray*}
T_{k+1}^0 &=& \mathcal{S}_0(\tilde{T}_k^0) + \left(\sum_{i=0}^{k+1} e_i^k \int_0^\infty \rho^{N-1} T_0^\infty(\rho) T_k^\infty(\rho) d\rho\right) \tilde{T}_0^0\\
&+& \left(\sum_{i=0}^{k+1} e_i^k \int_0^\infty \rho^{N-1} T_0^\infty(\rho) T_k^\infty(\rho) d\rho\right)(e_0^0T_0^\infty+e_1^0 T_1^\infty) - \sum_{i=1}^{k+2} e_{i-1}^k T_i^\infty.
\end{eqnarray*}
Noting that the first line in the equation above is of order $r^2$ as $r \sim 0,$ this completes the proof.
\end{proof}
From the solutions obtained from Proposition \ref{varofpars}, we obtain solutions of the homogeneous equation $\p_{tt}u-\Delta u +Vu=0$ such that
\[
\lim_{t \to \pm \infty} \int_{\vert x\vert > R+\vert t\vert}\vert \nabla_{t,x}u(t,x)\vert^2 dx = 0
\]
for any $R>0,$ namely
\[
S_k^{\infty,0}:= \sum_{i=0}^k T_{k-i}^\infty \frac{t^{2i}}{(2i)!},
\]
\[
S_k^{\infty,1}:=\sum_{i=0}^k T_{k-i}^\infty\frac{t^{2i+1}}{(2i+1)!}.
\]
Note that these are solutions for the systems
\[
\left\{
\begin{matrix}
\p_{tt}S_k^{\infty,0}-\Delta S_k^{\infty,0} +VS_k^{\infty,0}=0\\
(S_k^{\infty,0},\p_tS_k^{\infty,0})\vert_{t=0} = (T_k^\infty,0)
\end{matrix}
\right.
\]
\[
\left\{
\begin{matrix}
\p_{tt}S_k^{\infty,1}-\Delta S_k^{\infty,1} +VS_k^{\infty,1}=0\\
(S_k^{\infty,1},\p_tS_k^{\infty,1})\vert_{t=0} = (0,T_k^\infty)
\end{matrix}
\right.
\]
Suppose that $N \geq 8,$ and let
\[
\mathcal{D}:=\left\{S_k^{\infty,0}:0\leq k \leq \frac{N-6}{2}\right\} \cup \left\{S_k^{\infty,1}:0\leq k \leq \frac{N-8}{2}\right\}
\]
We claim these are the only ones.
\begin{prop}\label{rigidity}
Let $R>0$ and suppose that $u$ is a non-radiative solution to \eqref{LW} with $N\geq 8.$ Suppose additionally that
\begin{itemize}
    \item[i)] $u$ is an even in time solution in the case that $N\equiv 0 \mod 4;$
    \item[ii)]$u$ is an odd in time solution in the case that $N\equiv 2 \mod 4.$
\end{itemize}
Suppose that $R>0.$ Then:
\begin{itemize}
    \item[i)] If $N\equiv 0 \mod 4,$ then there exist coefficients $\{c_k^0\}_{k=0}^{\frac{N-6}{2}}$ such that
    \[
    u=\sum_{k=0}^{\frac{N-6}{2}} c_k^0 S_k^{\infty,0}
    \]
    for all $x$ with $\{\vert x\vert > R+\vert t\vert\}.$
    \item[ii)]If $N\equiv 2 \mod 4,$ then there exist coefficients $\{c_k^1\}_{k=0}^{\frac{N-8}{2}}$ such that
    \[
    u=\sum_{k=0}^{\frac{N-8}{2}} c_k^1 S_k^{\infty,1}
    \]
    for all $x$ with $\{\vert x\vert > R+\vert t\vert\}.$
\end{itemize}
If $R=0,$ then there exists a constant $c_0$ such that $u=c_0S_0^{\infty,0}$ if $N\equiv 0 \mod 4,$ and there exists a constant $c_1$ such that $u=c_1S_0^{\infty,1}$ if $N\equiv 2 \mod 4,$
\end{prop}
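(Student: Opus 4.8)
The plan is to carry out the Duyckaerts--Kenig--Merle rigidity scheme, adapted to the even-dimensional channels-of-energy estimate of Proposition~\ref{2.3}: treat the potential as a perturbation in the far exterior, extract the asymptotic expansion of $u$ along the cone $\mathcal C_{0,R}$ term by term, match each term with one of the explicit solutions in $\mathcal D$, and rule out a nontrivial remainder. As a first reduction, note that since $V$ is $t$-independent the equation \eqref{LW} is reversible in time, so the parity hypothesis forces one component of the data to vanish: if $N\equiv 0\bmod 4$ and $u$ is even in $t$ then $u_1=\p_t u|_{t=0}=0$, and if $N\equiv 2\bmod 4$ and $u$ is odd in $t$ then $u_0=0$. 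Write $v$ for the surviving datum, a radial function on $\{r>R\}$. Since $S_k^{\infty,0}$ is even in $t$ with Cauchy data $(T_k^\infty,0)$ and $S_k^{\infty,1}$ is odd with Cauchy data $(0,T_k^\infty)$, by finite speed of propagation and uniqueness for \eqref{LW} it suffices to show that $v$ agrees on $\{r>R\}$ with a finite linear combination $\sum_k c_k T_k^\infty$ (with $0\le k\le\frac{N-6}{2}$, resp. $\frac{N-8}{2}$); the matching combination of the $S_k^{\infty,0}$ (resp. $S_k^{\infty,1}$), which has the same Cauchy data on $\{r>R\}$, then coincides with $u$ throughout $\mathcal C_{0,R}$.

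To make the perturbative structure explicit, observe that the asymptotics of $W$ give $|\p_r^\alpha V(r)|\lesssim r^{-4-\alpha}$, so the radial Hardy inequality yields $\|Vu\|_{L_t^1L_r^2(\mathcal C_{0,\rho})}\lesssim \rho^{-2}\|(u,\p_t u)\|_{E_{0,\rho}}$ for $\rho\ge R$, and Lemma~\ref{extstrichartz} applied to $\p_{tt}u-\Delta u=-Vu$ then gives $\|(u,\p_t u)\|_{E_{0,\rho}}\lesssim \|(u_0,u_1)\|_{\dot H^1_\rho\times L^2_\rho}$ once $\rho$ is large enough to absorb the perturbative term. Applying Proposition~\ref{2.3} on $\mathcal C_{0,\rho}$ with source $f=-Vu$, and using that $u$ is non-radiative so that the limit term vanishes, we get $\|v\|_{\mathcal A,N,\rho}\lesssim \rho^{-2}\|(u_0,u_1)\|_{\dot H^1_\rho\times L^2_\rho}$: modulo the finite-dimensional space $\mathrm{Span}\{|x|^{-(N-2k)}\}$ of \eqref{span1}/\eqref{span2}, the datum $v$ is negligible at the order dictated by the decay of $V$. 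Iterating the Duhamel formula and keeping track of the corrections — which is exactly the effect of the variation-of-parameters operators $\mathcal S_0,\mathcal S_\infty$ of Proposition~\ref{varofpars} acting on the elliptic profiles $|x|^{-(N-2k)}$ — upgrades these profiles to the $T_k^\infty$ and organizes $v$ on $\{r>R\}$ as $v=\sum_k c_k T_k^\infty+w$, where the $c_k$ are read off order by order from the expansion of $v$ and $w$ decays, as $r\to\infty$, strictly faster than every profile in $\mathcal D$ and every element of \eqref{span1}/\eqref{span2}.

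It then remains to kill the remainder. Because $w$ decays faster than the span \eqref{span1} (resp. \eqref{span2}), one checks that $\|w\|_{\mathcal A,N,\rho}\gtrsim\|w\|_{\dot H^1_\rho}$ (resp. $\gtrsim\|w\|_{L^2_\rho}$) for $\rho$ large; applying the estimate of the previous step to $\tilde u:=u-\sum_k c_k S_k^{\infty,0}$ (resp. $\sum_k c_k S_k^{\infty,1}$), which is still non-radiative, gives $\|w\|_{\dot H^1_\rho}\lesssim \rho^{-2}\|w\|_{\dot H^1_\rho}$ for $\rho$ large, hence $w\equiv 0$ on $\{r>\rho\}$ and then, by the ODE structure of the static equation, on all of $\{r>R\}$. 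This settles the case $R>0$. When $R=0$ the cone reaches the origin, and one runs the same argument but additionally uses that every $T_k^\infty$ with $k\ge 1$ behaves like $r^{-(N-2)}$ as $r\to 0$ and is thus not locally in $\dot H^1$ near the origin, whereas $u$ is; hence only $S_0^{\infty,0}$ (resp. $S_0^{\infty,1}$) can appear, and $u=c_0 S_0^{\infty,0}$ (resp. $c_1 S_0^{\infty,1}$). Lemma~\ref{decompo} is what allows one to split off the genuinely singular part from the $O(r^2)$ part in the putative expansion at $r=0$.

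The step I expect to be the main obstacle — the feature that is absent in the cases $N=6,8$ treated in \cite{mainref} — is the bookkeeping of the expansion: the set $\mathcal D$ now contains $\sim N/2$ profiles $T_k^\infty$, so the peeling must be carried through that many steps while tracking as many distinct decay exponents at once; moreover the even-dimensional estimate of Proposition~\ref{2.3} controls only one of the two data components in $L^2$ (the other merely in the logarithmically weakened norm $Z_{-3}$), and each use of it costs a factor $\langle\log\rho\rangle$. The delicate point is therefore to organize the extraction of the coefficients $c_k$ — equivalently, to solve the linear system relating the asymptotic coefficients of $v$ to the $c_k$ — so that these logarithmic losses do not accumulate over the $O(N)$ steps and the resulting expansion is \emph{exact}, ensuring that the remainder $w$ genuinely lies outside \eqref{span1}/\eqref{span2}; this, together with accommodating the profiles $T_k^\infty$ that are not themselves in $\dot H^1$, is the higher-dimensional linear-algebra input advertised in the abstract.
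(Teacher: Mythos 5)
Your proposal takes a genuinely different route from the paper's proof. The paper does not peel off the coefficients $c_k$ via an asymptotic expansion at $r=\infty$; it chooses all the $c_k^\sigma$ at once so that the Cauchy datum of $\tilde u := u-\sum_k c_k^\sigma S_k^{\infty,\sigma}$ is, in the exterior $L^2_{R'}$ pairing, orthogonal to the finite-dimensional span of $\{r^{-(N-2j)}\}$ appearing in \eqref{span1}/\eqref{span2}. The existence of such a choice is a linear-algebra statement (invertibility of the Gram-type matrix $A=A_0(R')+A_{\mathrm{error}}(R')$), and the orthogonality makes the $\mathcal A,N,R'$ quantity of $\tilde u(0)$ equal to the full exterior norm by construction. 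The conclusion on $\{|x|>R'+|t|\}$ then follows in one stroke: apply Proposition~\ref{2.3} with source $-V\tilde u$, apply Lemma~\ref{extstrichartz}, use H\"older, and absorb via the smallness of $\|V\|_{L^2_tL^{2N/3}_x(\{r>R'+|t|\})}$ for $R'$ large; propagating from $R'$ down to all $R>0$ is then done as in \cite{cdkm}.

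The concrete gap in your version is the remainder step. You assert that because $w$ decays strictly faster than every element of \eqref{span1}/\eqref{span2}, one has $\|w\|_{\mathcal A,N,\rho}\gtrsim\|w\|_{\dot H^1_\rho}$; but decay rate does not by itself control the $\dot H^1_\rho$-angle of $w$ against the finite-dimensional span. For a pure power $w=r^{-\alpha}$ the cosine of the angle with a span element $r^{-(N-2j)}$ is independent of $\rho$ and tends to $1$ as $\alpha$ approaches the span exponent, so the lower bound degenerates unless you also establish a uniform quantitative gap in the decay exponents of $w$ — and your sketch of ``iterating the Duhamel formula'' never actually produces the expansion $v=\sum_k c_k T_k^\infty+w$ with such a gap. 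The paper sidesteps this entirely by imposing orthogonality in the definition of the $c_k$, rather than trying to deduce it from asymptotics. There is also a bookkeeping obstruction you do not address: for $N\geq 12$ the number of profiles $T_k^\infty$ to be extracted, namely $\tfrac{N-4}{2}$ (resp.\ $\tfrac{N-6}{2}$), exceeds the dimension $\tfrac{N}{4}$ (resp.\ $\tfrac{N-2}{4}$) of the span \eqref{span1}/\eqref{span2}, so the coefficients read off order by order from Proposition~\ref{2.3} cannot be in one-to-one correspondence with the $c_k$; this is precisely why the paper treats the coefficient identification as a single linear system rather than a peeling. Your handling of the $R=0$ case — discarding $S_k^{\infty,\sigma}$ for $k\ge 1$ because $T_k^\infty\sim r^{-(N-2)}$ at the origin forces them out of the energy space — does match the paper.
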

The analogues of these for $N=4,6$ are done in \cite{cdkm}.
\begin{proof}
First note that by replacing $u$ with $\frac{1}{2}(u(t,x)-u(-t,x))$ if $N\equiv 0 \mod 4$ or $\frac{1}{2}(u(t,x)+u(-t,x))$ if $N\equiv 2 \mod 4,$ we can assume that $u_0\equiv 0$ if $N= 0 \mod 4$ or $u_1\equiv0$ if $N= 2 \mod 4.$ Fix a large radius $R'>0.$ If $N\equiv 0 \mod 4,$ let $\{c_k^0\}_{k=0}^{\frac{N-6}{2}}$ be such that
\[
\int_{R'}^\infty \left(u_1 - \sum_{k=0}^{\frac{N-6}{2}} c_k^0 T_k^{\infty}\right) \frac{1}{r^{N-2j}} r^{N-1}dr=0,
\]
for all $1\leq j\leq \frac{N}{4},$ while in the case $N\equiv 2 \mod 4$ we let $\{c_k^1\}_{k=0}^{\frac{N-8}{2}}$ be such that
\[
\int_{R'}^\infty \left(u_0 - \sum_{k=0}^{\frac{N-8}{2}} c_k^1 T_k^{\infty}\right) \frac{1}{r^{N-2j}} r^{N-1}dr=0,
\]
for all $0\leq j\leq \frac{N-2}{4}.$ To show the existence of these coefficients, note that their existence is equivalent to the invertibility of the matrices $A=(a_{i,j})_{0\leq i,j \leq \frac{N-6}{2}}$ (or $0\leq i,j \leq \frac{N-8}{2}$ if $N \equiv 2 \mod 4$) with
\[
a_{i,j} = \int_{R'}^\infty T_i^\infty \frac{1}{r^{N-2j}} r^{N-1}dr.
\]
This matrix can be written as
\[
A=A_0(R')+A_{\mathrm{error}}(R'),
\]
where $A_0(R')=(a_{ij}^0(R'))_{0\leq i,j\leq \frac{N-6}{2}}$ (or $(a_{ij}^0(R'))_{0\leq i,j\leq \frac{N-8}{2}}$ if $N \equiv 2 \mod 4$) with
\[
a_{i,j}^0(R') = \int_{R'}^\infty \frac{1}{r^{N-2i}} \frac{1}{r^{N-2j}} r^{N-1}dr
\]
and $A_{\mathrm{error}}(R')$ is a smaller order term. More precisely, we have by explicit computation
\[
\norm{A_{\mathrm{error}}(R')}_2 = O(R'^{-1}\norm{A_0(R')}_2)
\]
where $\norm{\cdot}_2$ denotes the $L^2$ norm for matrices. The matrix $A_0(R')$ is easily seen to be invertible, since if $v=(v_i)_{0\leq i \leq \frac{N-6}{2}}$ lies in its kernel, then we obtain
\[
\int_{R'}^\infty \left(\sum_{i=0}^{\frac{N-6}{2}}v_i \frac{1}{r^{N-2i}} r^{N-1}dr\right)v_k = 0
\]
for all $1\leq k \leq \frac{N-6}{2},$ and so $v=0.$ In the case that $N \equiv 2 \mod 4,$ we set $c_{(N-6)/2}^1=0.$ Also, let $\sigma:=0$ if $N\equiv 0 \mod 4$ and $\sigma:=1$ if $N\equiv 2 \mod 4.$
Recall that by Proposition \ref{2.3} we have
\[
\norm{u-\sum_{k=0}^{\frac{N-6}{2}}c_k^\sigma S_k^{\infty,\sigma}}_{\mathcal{A},N,R'} \lesssim \norm{V\left(u-\sum_{k=0}^{\frac{N-6}{2}}c_k^\sigma S_k^{\infty,\sigma}\right)}_{L_t^1L_r^2(\mathcal{C}_{0,R'})}.
\]
Now by Lemma \ref{extstrichartz}, we have
\begin{eqnarray*}
\norm{u-\sum_{k=0}^{\frac{N-6}{2}}c_k^\sigma S_k^{\infty,\sigma}}_{L^2L^{\frac{2N}{N-3}}(\vert x\vert \geq R'+\vert t\vert)} \lesssim \norm{V\left(u-\sum_{k=0}^{\frac{N-6}{2}}c_k^\sigma S_k^{\infty,\sigma}\right)}_{L_t^1L_r^2(\mathcal{C}_{0,R'})} + \norm{u-\sum_{k=0}^{\frac{N-6}{2}}c_k^\sigma S_k^{\infty,\sigma}}_{\mathcal{A},N,R'}.
\end{eqnarray*}
Combining these two estimates and using H\"older, we obtain
\[
\norm{u-\sum_{k=0}^{\frac{N-6}{2}}c_k^\sigma S_k^{\infty,\sigma}}_{L^2L^{\frac{2N}{N-3}}(\vert x\vert \geq R'+\vert t\vert)} \lesssim \norm{V}_{L^2L^{\frac{2N}{3}}(\vert x\vert \geq R'+\vert t\vert)}\norm{u-\sum_{k=0}^{\frac{N-6}{2}}c_k^\sigma S_k^{\infty,\sigma}}_{L^2L^{\frac{2N}{N-3}}(\vert x\vert \geq R'+\vert t\vert)}.
\]
Since $V \in L^2_tL^\frac{2N}{3}_x,$ taking $R'$ large, we obtain the result for $|x|\geq R'+|t|.$ Arguing as in \cite[Lemma 3.2]{cdkm}, we obtain it for all $R>0.$ For the case of $R=0,$ note that $S_{k}^{\infty,0},S_{k}^{\infty,1} \notin E_{0,R}$ if $R=0,$ $k \geq 1.$ This shows that the only choice for $u$ is given by $c_0S_0^{\infty,0}$ or $c_0S_0^{\infty,1}.$
\end{proof}
The next step is to show a quantitative version of Proposition \ref{rigidity}. For this, let $\chi_0$ be a cutoff such that
\[
\chi_0(r) := 
\left\{
\begin{matrix}
1 & \mbox{if } r\leq 10,\\
0 & \mbox{if } r \geq 11
\end{matrix}
\right.
\]
with $\chi_0(r) \in [0,1].$
suppose that $N\geq 8.$ We define the projection $\Pi^\perp_R$ by
\begin{equation}\label{proj1}
\Pi^\perp_R (u_0,u_1) =
\left\{
\begin{matrix}
\Pi_{\dot{H}^1_R}(\mathrm{Span}\{\Lambda W\})^\perp(u_0) & \mbox{for }R=0\\
\Pi_{\dot{H}^1_R}(\mathrm{Span}\{T_k^\infty\}_{k=0}^{\frac{N-6}{2}}\cup\{\chi_0 T_k^0\}_{k=0}^{\frac{N-6}{2}})^\perp(u_0) & \mbox{for }R\in (0,1)\\
\Pi_{\dot{H}^1_R}(\mathrm{Span}\{T_k^\infty\}_{k=0}^{\frac{N-6}{2}})^\perp(u_0) & \mbox{ for } R \geq 1.
\end{matrix}
\right.
\end{equation}
in the case that $N \equiv 0 \mod 4,$ and
\begin{equation}\label{proj2}
\Pi^\perp_R (u_0,u_1) =
\left\{
\begin{matrix}
\Pi_{L^2_R}(\mathrm{Span}\{T_k^\infty\}_{k=0}^{\frac{N-8}{2}})^\perp(u_1) & \mbox{for }R=0 \mbox{ or } R \geq 1\\
\Pi_{L^2_R}(\mathrm{Span}\{T_k^\infty,T_k^0\}_{k=0}^{\frac{N-8}{2}})^\perp(u_1) & \mbox{for }R\in (0,1).
\end{matrix}
\right.
\end{equation}
in the case that $N \equiv 2 \mod 4.$
Let $\overrightarrow{T}_k^\infty, \overrightarrow{T}_k^0$ be defined as
\[
\overrightarrow{T}_k^\infty =
\left\{
\begin{matrix}
(T_k^\infty,0) & \mbox{ if } N \equiv 0\mod 4,\\
(0,T_k^\infty) & \mbox{ if } N \equiv 2\mod 4
\end{matrix}
\right.
\]
and
\[
\overrightarrow{T}_k^0 =
\left\{
\begin{matrix}
(T_k^0,0) & \mbox{ if } N \equiv 0\mod 4,\\
(0,T_k^0) & \mbox{ if } N \equiv 2\mod 4.
\end{matrix}
\right.
\]
For convenience we set $T_{\frac{N-6}{2}}^\infty,T_{\frac{N-6}{2}}^0 = 0$ in the case that $N \equiv 2 \mod 4.$
\begin{lemma}\label{notinterestingsingle}
There is a constant $C$ such that for all $R>0$ and any radial solution $u_L$ of \eqref{LW} with $u_L \in \mathcal{H}_R$ then
\[
\norm{\Pi^\perp_R (u_0,u_1)}_{\dot{H}^1}^2 \leq C \sum_{\pm}\lim_{t \to \pm \infty}\int_{r\geq R+\vert t\vert} \vert \nabla_{t,x}u_L\vert^2 dx
\]
if $N \equiv 0 \mod 4,$
\[
\norm{\Pi^\perp_R (u_0,u_1)}_{L^2} \leq C \sum_{\pm} \lim_{t \to \pm \infty} \int_{r \geq R+\vert t\vert} \vert \nabla_{t,x} u_L\vert^2 dx
\]
if $N \equiv 2 \mod 4.$
\end{lemma}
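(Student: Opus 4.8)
The plan is to run the Duyckaerts--Kenig--Merle compactness-and-rigidity scheme: argue by contradiction, produce a normalized sequence of would-be counterexamples, control it using the exterior Strichartz estimate of Lemma \ref{extstrichartz} and the free-wave exterior energy estimate of Proposition \ref{2.3}, and invoke the qualitative rigidity of Proposition \ref{rigidity} to force the limiting solution to be one of the explicit non-radiative solutions, hence annihilated by $\Pi_R^\perp$. First I would reduce to a fixed time parity: since $\Pi_R^\perp(u_0,u_1)$ depends only on $u_0$ when $N\equiv0\bmod4$ (only on $u_1$ when $N\equiv2\bmod4$), and passing from $u_L(t,x)$ to $\tfrac12\big(u_L(t,x)-u_L(-t,x)\big)$ (resp. $\tfrac12\big(u_L(t,x)+u_L(-t,x)\big)$) preserves $u_0$ (resp. $u_1$) and at most doubles $\sum_\pm\lim_{t\to\pm\infty}\int_{r\ge R+|t|}|\nabla_{t,x}u_L|^2\,dx$, we may assume $u_L$ is even (resp. odd) in time. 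I spell out the case $N\equiv0\bmod4$; the other is identical with $\dot{H}^1_R$, $\pi_0$, $\{T_k^\infty\}_{k=0}^{(N-6)/2}$, $u_0$ replaced by $L^2_R$, $\pi_1$, $\{T_k^\infty\}_{k=0}^{(N-8)/2}$, $u_1$.

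Suppose the estimate fails: then there are $R_n>0$ and even-in-time solutions $u_{L,n}$ with data $(u_{0,n},0)$, normalized by $\|\Pi_{R_n}^\perp(u_{0,n},0)\|_{\dot{H}^1_{R_n}}=1$, such that $\sum_\pm\lim_{t\to\pm\infty}\int_{r\ge R_n+|t|}|\nabla_{t,x}u_{L,n}|^2\,dx\to0$ (the two signs agree by time symmetry). Writing $\Pi_{R_n}^\perp=1-P_{R_n}$, with $P_{R_n}$ the $\dot{H}^1_{R_n}$-orthogonal projection onto the relevant span, I would decompose on the cone $\{r>R_n+|t|\}$: $u_{L,n}=v_n+\sum_k c_k^n S_k^{\infty,0}$, where $\sum_k c_k^n S_k^{\infty,0}$ is the evolution of the $\mathrm{Span}\{T_k^\infty\}$-component of $P_{R_n}u_{0,n}$ (non-radiative at every radius, by construction) and $v_n$ is the solution whose data on $\{r>R_n\}$ is $\Pi_{R_n}^\perp u_{0,n}$ plus, when $R_n\in(0,1)$, the $\chi_0T_k^0$-component of $P_{R_n}u_{0,n}$. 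Then $v_n$ has the same exterior energy as $u_{L,n}$, hence $\sum_\pm\lim_{t\to\pm\infty}\int_{r\ge R_n+|t|}|\nabla_{t,x}v_n|^2\,dx\to0$, and I claim $v_n$ can be taken bounded in $\dot{H}^1\times L^2$: the $\Pi_{R_n}^\perp u_{0,n}$-part has unit norm, while the coefficients of the $\chi_0T_k^0$-part stay bounded, since otherwise rescaling $v_n$ by such a coefficient and passing to a limit would give a non-radiative solution whose data, on an interval, is a nonzero combination of the $T_k^0$'s and, by Proposition \ref{rigidity}, also a combination of the $T_k^\infty$'s, which is impossible since these form linearly independent families of generalized eigenfunctions of $-\Delta+V$. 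Passing to a subsequence, $R_n\to R_\infty\in[0,\infty]$.

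Next I want $\|\pi_0^\perp v_{0,n}\|_{\dot{H}^1_{R_n}}\to0$. Applying Proposition \ref{2.3} to $v_n$ with forcing $f=-Vv_n$ bounds this by $\big(\lim_{t\to\infty}\int_{r\ge R_n+|t|}|\nabla_{t,x}v_n|^2\,dx\big)^{1/2}+\|Vv_n\|_{L^1_tL^2_r(\mathcal{C}_{0,R_n})}$, whose first term is $o(1)$. If $R_\infty=\infty$, then $\|V\|_{L^2_tL^{2N/3}_x(\mathcal{C}_{0,R_n})}\to0$, so Lemma \ref{extstrichartz} (after absorbing the potential term) gives $\|v_n\|_{L^2_tL^{2N/(N-3)}_r(\mathcal{C}_{0,R_n})}\lesssim1$ and H\"older yields $\|Vv_n\|_{L^1_tL^2_r(\mathcal{C}_{0,R_n})}\to0$. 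If $R_\infty\in[0,\infty)$, I would instead extract a weak limit $v_n\rightharpoonup v_\infty$; by weak lower semicontinuity of the exterior energy, $v_\infty$ is non-radiative in $\{r>R_\infty+|t|\}$, it is even in time, and it solves \eqref{LW}, so by Proposition \ref{rigidity} it is a combination of the $S_k^{\infty,0}$'s (just $c_0\Lambda W$ if $R_\infty=0$); but the data of $v_n$ is $\dot{H}^1_{R_n}$-orthogonal to $\mathrm{Span}\{T_k^\infty\}$ (to $\Lambda W$ if $R_\infty=0$), which passes to the limit, so $v_{0,\infty}$ lies in and is orthogonal to that span, forcing $v_\infty=0$ and $v_n\rightharpoonup0$; then, splitting $\mathcal{C}_{0,R_n}$ into a bounded spacetime region — on which $v_n\to0$ strongly in $L^2_tL^{2N/(N-3)}_r$, by Rellich compactness together with the Strichartz bound of Lemma \ref{extstrichartz} — and a far region $\{r>A+|t|\}$ — on which $\|V\|_{L^2_tL^{2N/3}_x}$ is small uniformly in $n$ once $A$ is large — H\"older again gives $\|Vv_n\|_{L^1_tL^2_r(\mathcal{C}_{0,R_n})}\to0$. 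In every case $\|\pi_0^\perp v_{0,n}\|_{\dot{H}^1_{R_n}}\to0$.

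It remains to show the resonant remainder $\pi_0v_{0,n}\in\mathrm{Span}\{|x|^{-(N-2k)}:1\le k\le N/4\}$ also tends to $0$ in $\dot{H}^1_{R_n}$; together with the previous paragraph this forces $\|v_{0,n}\|_{\dot{H}^1_{R_n}}\to0$, contradicting the normalization. When $R_\infty\in(0,\infty)$ this is immediate: $\pi_0v_{0,n}$ lies in a fixed finite-dimensional subspace of $\dot{H}^1_{R_\infty}$ on which weak convergence implies strong, so it tends to $0$. When $R_\infty\in\{0,\infty\}$ that subspace degenerates (its unit vectors concentrate at $r\sim R_n$), and here one uses that — by the dimension count $N/4\le\tfrac{N-4}{2}$, valid exactly for $N\ge8$, with equality at $N=8$ — the leading asymptotics of the functions $|x|^{-(N-2k)}$, $1\le k\le N/4$, are carried by those of $\{T_k^\infty\}_k$ at $r=\infty$ and by those of $\{T_k^0\}_k$ at $r=0$; consequently, in $\dot{H}^1_{R_n}$, $\mathrm{range}(\pi_0)$ is, up to $o(1)$ relative error, contained in the span to which $v_{0,n}$ is orthogonal (which is exactly why the $\chi_0T_k^0$'s are built into the definition of $\Pi_R^\perp$ on $(0,1)$), so $\|\pi_0v_{0,n}\|_{\dot{H}^1_{R_n}}^2=\langle v_{0,n},\pi_0v_{0,n}\rangle_{\dot{H}^1_{R_n}}=o(1)$. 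The main obstacle is this matching, carried out uniformly in $R$: in low dimensions there is at most one such generalized eigenfunction and the free-wave resonant space agrees with the perturbed one up to lower order, whereas for $N\ge8$ one must track a number of $T_k^\infty,T_k^0$ growing linearly in $N$ and verify, in particular in the degenerate regimes $R_n\to0$ and $R_n\to\infty$ (and for the boundedness step above), that $\mathrm{range}(\pi_0)$ is absorbed by the perturbed resonant span — which is precisely what the inequality $N\ge8$ and the modified projection on $(0,1)$ secure.
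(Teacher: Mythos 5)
Your high-level strategy (DKM contradiction argument, decompose off the $T_k^\infty$-evolution, exterior Strichartz plus the alternating-norm estimate, then rigidity) is the same as the paper's, and your treatment of $R_\infty\in(0,\infty)$ and $R_\infty=\infty$ is essentially correct. The genuine gap is in the $R_n\to 0$ regime, in two places. First, your boundedness of the $\chi_0T_k^0$-coefficients is only a heuristic: ``rescale and pass to a limit'' is exactly the sort of compactness step that fails when $R_n\to 0$, since the rescaled data escapes every fixed $\dot H^1_R$-ball; the paper makes this quantitative via the positive-definiteness of the Gram matrix of asymptotic exterior energies of the $\tilde S_k^0$'s (the matrix $A$), together with a second linear system ($B$) controlling the $T_k^\infty$-coefficients, with careful tracking of the $R_n$-orders of its entries to see that $B^{-1}$ stays bounded where needed.

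Second, and more seriously, your final step does not go through as written. You claim $\|\pi_0 v_{0,n}\|_{\dot H^1_{R_n}}^2=\langle v_{0,n},\pi_0 v_{0,n}\rangle=o(1)$ because $\mathrm{range}(\pi_0)$ is absorbed, up to $o(1)$ relative error, into the span to which $v_{0,n}$ is orthogonal. But $v_{0,n}$ is \emph{not} orthogonal to the $\chi_0T_k^0$'s --- by your own construction $v_{0,n}$ \emph{contains} the $\chi_0T_k^0$-component of $u_{0,n}$ with coefficients that are only bounded, not small, and those functions are precisely the ones asymptotically aligned with $r^{-(N-2k)}$. In fact, since $\|v_{0,n}\|^2=\|\Pi^\perp_{R_n}u_{0,n}\|^2+\|\chi_0T_k^0\text{-part}\|^2\ge 1$, once $\|\pi_0^\perp v_{0,n}\|\to 0$ one must have $\|\pi_0 v_{0,n}\|\ge 1-o(1)$, so the statement you are trying to prove is incompatible with the quantities you are working with. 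The missing ingredient is a further orthogonalization: one must split $v_n=\sum_i\bar c_n^i\,\chi_0T_i^0+w_n$ with $w_n$ orthogonal to the $\chi_0T_i^0$ in a suitable weighted space, show $w_n\rightharpoonup 0$ there, and run the asymptotic-matching argument (comparing $r^{-(N-2k)}$ with $\chi_0T_{k-1}^0$ in $\dot H^1_{R_n}$-relative error) on $w_n$ rather than on $v_n$. Without that step the contradiction is not reached, and this is exactly the content of the paper's Case 4.
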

\begin{proof}
Suppose by contradiction that the result does not hold. Then there exists initial conditions $u_n$ with corresponding solution to \eqref{LW} $\{u_{L,n}\}_n$, where $u_n=(u_{0,n},0)$ if $N\equiv 0 \mod N,$ and $u_n=(0,u_{1,n})$ if $N\equiv 2 \mod N,$ with
\[
\norm{\Pi^\perp_R (u_{0,n},u_{1,n})}_{\dot{H}^1}=1
\]
if $N \equiv 0 \mod 4,$
\[
\norm{\Pi^\perp_R (u_{0,n},u_{1,n})}_{L^2}=1
\]
if $N \equiv 2 \mod 4,$ and
\begin{equation}\label{eq10}
\lim_{n\to\infty} \left(\lim_{t \to \infty}\int_{\vert x\vert \geq R_n+\vert t\vert}\vert \nabla_{t,x} u_{L,n}(t,x)\vert^2dx\right)=0.
\end{equation}
We can decompose the initial condition $u_n$ in $\{\vert x\vert \geq R_n\}$ as
\[
u_n=\sum_{k=0}^{\frac{N-6}{2}}c_{k,n} \overrightarrow{T}_k^\infty + v_n,
\]
where $v_n$ is chosen so that $v_n = (v_{0,n},0)$ if $N \equiv 0 \mod 4,$ $v_n = (0,v_{1,n})$ if $N \equiv 2 \mod 4,$ with
\begin{equation}\label{orthog}
v_{0,n} \perp_{\dot{H}^1_{R_n}} \overrightarrow{T}_k^\infty \mbox{ if } N \equiv 0 \mod 4, \; v_{1,n} \perp_{L^2_{R_n}} \overrightarrow{T}_k^\infty \mbox{ if } N \equiv 2 \mod 4
\end{equation}
and where $v_{0,n}, v_{1,n}$ have support in the exterior of $\{\vert x\vert \geq R_n\}.$ We split into cases. Let $\mathcal{A}_{N,R}$ denote the space $\dot{H}^1_R$ if $N \equiv 0 \mod 4,$ and the space $L^2_R$ if $N \equiv 2 \mod 4.$\\

\underline{Case 1:} For all $n,$ $R_n \geq 1.$ By taking a subsequence if necessary, we can additionally assume that $R_n \to R_\infty,$ where $R_\infty \geq 1.$ Note that in this case by the definition of the projection $\Pi_R^\perp,$
\[
\norm{v_n}_{\dot{H}^1_{R_n} \times L^2_{R_n}} = 1.
\]
We claim that
\[
v_n\rightharpoonup0
\]
weakly in $\dot{H}^1 \times L^2.$ Indeed, first note that if $R_\infty = \infty$ then this is trivial since $v_n(r)=0$ if $r \leq R_n.$ If not, let $v^*$ be any weak subsequential limit. Then by the orthogonality conditions \eqref{orthog} we have that
\[
v^* \perp_{\dot{H}^1 \times L^2} \overrightarrow{T}_k^\infty
\]
by weak convergence and the fact that $\norm{T_k^\infty}_{\mathcal{A}_{N,1}}<\infty.$ On the other hand, let $v_{L}$ denote the solution to \eqref{LW} with initial conditions $v^*.$ We also let $v_n^R$ be the solution to \eqref{LW} in $\{\vert x\vert \geq R\}$ with initial data $1_{r\geq R}v_n,$ where $R$ is chosen so that $R > R_\infty,$ and similarly $v^R$ be the solution to \eqref{LW} in $\{\vert x\vert \geq R\}$ with initial data $1_{r\geq R}v^*.$ Using our contradiction hypothesis and finite propagation speed we have
\begin{equation}\label{eq9bis}
\lim_{n\to\infty}\lim_{t\to\infty} \int_{r \geq R+\vert t\vert} \vert 
\nabla_{t,x}v_{L,n}^R\vert^2= 0.
\end{equation}
Since $(v_{L,n}^R(0),\p_t v_{L,n}^R(0)) \rightharpoonup v^*$ weakly in $\dot{H}^1\times L^2,$ we have that $v^*$ is in the $\dot{H}^1\times L^2$ closure of the convex hull of $\{(v_{L,n}^R(0),\p_t v_{L,n}^R(0))\}_{n \geq N_0}$ for any large $N_0.$ Hence for any $\e>0,$ there exist coefficients $\{c_k\}_{k=1}^\infty$ such that all but finitely many are $0,$ their sum is $1,$ and such that
\[
\norm{v^* - \sum_{k=0}^\infty c_k \left(v_{L,k}^R(0),\p_t v_{L,k}^R(0)\right)}_{\dot{H}^1_R\times L^2_R} < \e.
\]
Let $w$ denote the solution to \eqref{LW} with initial condition $(v^* - \sum_{k=0}^\infty c_k \left(v_{L,k}^R(0),\p_t v_{L,k}^R(0)\right))1_{\vert x\vert > R}.$ Then by Lemma 2.1 in \cite{exterior10} we have that $\sup_{t \in \R} \norm{(w,\p_t w)}_{\dot{H}^1\times L^2(|x|\geq R+|t|)} \leq C.$ This implies that $v^R$ is then in the $L^\infty_t(\dot{H}^1_R\times L^2_R)$ closure of the convex hull of $\{v_n^R\}_{n \geq N_0}.$ Thus combining this with \eqref{eq9bis} we obtain that
\begin{equation}\label{eq11}
\lim_{\vert t\vert \to \infty} \int_{r\geq R+\vert t\vert} \vert \nabla_{t,x} v_L^R\vert^2 = 0.
\end{equation}
Now using Proposition \ref{rigidity} we see that there exist coefficients $c_k$ such that $v_L^R = \sum_{k=1}^{\frac{N-6}{2}} c_k(R) \overrightarrow{S}_k^\infty$ (or $v_L^R = \sum_{k=1}^{\frac{N-8}{2}} c_k(R) \overrightarrow{S}_k^\infty$ if $N\equiv 2 \mod 4$) Since $v_L^{R_1}(0),$ $v_L^{R_2}(0)$ coincide for $\vert x\vert \geq \max\{R_1,R_2\},$ the coefficient $c_k(R)$ is independent of $R,$ and so we see that $v_L^R=0$ and therefore $v^*=0.$ Now the claim has been established.

We let $v_{F,n}$ be the free wave $(\p_{tt}-\Delta)v_{F,n}=0$ where $v_{F,n}$ has initial conditions $v_n.$ Then by Lemma 3.7 in \cite{exterior10} together with the contradiction hypothesis we have
\[
\lim_{t\to \infty}\int_{\vert x\vert \geq R_n+\vert t\vert} \vert \nabla_{t,x}v_{F,n}\vert^2 dx\geq C\norm{v_n}_{\dot{H}^1\times L^2}^2 - \sum_{j=0}^{\frac{N-4}{4}}\left(\norm{r^{-(N-2j-2)}}_{\dot{H}^1_{R_n}}\int_{\vert x\vert \geq R_n} \nabla \left(\vert x\vert^{-(N-2j-2)}\right) \cdot \nabla v_n(x) dx \right)^2.
\]
if $N \equiv 0 \mod 4,$ and
\[
\lim_{t\to \infty}\int_{\vert x\vert \geq R_n+\vert t\vert} \vert \nabla_{t,x}v_{F,n}\vert^2 dx\geq C\norm{v_n}_{\dot{H}^1\times L^2}^2 - \sum_{j=0}^{\frac{N-8}{4}}\left(\norm{r^{-(N-2j-2)}}_{L^2_{R_n}}\int_{\vert x\vert \geq R_n} \vert x\vert^{-(N-2j-2)} v_n(x) dx \right)^2.
\]
if $N \equiv 2 \mod 4.$
Now since $v_n\rightharpoonup v^*=0$ in $\dot{H}^1 \times L^2,$ we have that
\[
\lim_{n \to \infty} \norm{r^{-(N-2j-2)}}_{\dot{H}^1_{R_n}}\int_{\vert x\vert \geq R_n} \nabla \left(\vert x\vert^{-(N-2j-2)}\right) \cdot \nabla v_n(x) dx = 0
\]
if $N \equiv 0\mod 4,$ and
\[
\lim_{n \to \infty} \norm{r^{-(N-2j-2)}}_{\dot{H}^1_{R_n}}\int_{\vert x\vert \geq R_n} \vert x\vert^{-(N-2j-2)} v_n(x) dx = 0
\]
if $N \equiv 2\mod 4.$ Combining with \eqref{eq11} this yields a contradiction.
\\

\underline{Case 2:} For all $n,$ $R_n =0.$ This case is simlar to the previous one. We let $v_n = \Pi_{R}^\perp u_n,$ and so in particular
\[
\norm{v_n}_{\dot{H}^1\times L^2} \leq 1.
\]
We again claim that $v_n \rightharpoonup 0$ in $\dot{H}^1\times L^2.$ Let $v^*$ be any subsequential weak limit in $\dot{H}^1\times L^2.$ Then we have that
\[
v^* \perp_{\dot{H}^1 \times L^2} \overrightarrow{T}_0^\infty
\]
by weak convergence and the fact that $\norm{T_k^\infty}_{\mathcal{A}_N}<\infty.$ On the other hand, let $v_{L}$ denote the solution to \eqref{LW} with initial conditions $v^*.$ We also let $v_n^R$ be the solution to \eqref{LW} in $\{\vert x\vert \geq R+|t|\}$ with initial data $1_{r\geq R}v_n,$ where $R$ is chosen so that $R > R_\infty,$ and similarly $v^R$ be the solution to \eqref{LW} in $\{\vert x\vert \geq R\}$ with initial data $1_{r\geq R}v^*.$ Using our contradiction hypothesis and finite propagation speed we have
\begin{equation}\label{eq9}
\lim_{n\to\infty}\lim_{t\to\infty} \int_{r \geq R+\vert t\vert} \vert 
\nabla_{t,x}v_{L,n}^R\vert^2= 0.
\end{equation}
Since $(v_{L,n}^R(0),\p_t v_{L,n}^R(0)) \rightharpoonup v^*$ weakly in $\dot{H}^1\times L^2,$ we have that $v^*$ is in the $\dot{H}^1\times L^2$ closure of the convex hull of $\{(v_{L,n}^R(0),\p_t v_{L,n}^R(0))\}_{n \geq N_0}$ for any large $N_0.$ Hence for any $\e>0,$ there exist coefficients $\{c_k\}_{k=1}^\infty$ such that all but finitely many are $0,$ their sum is $1,$ and such that
\[
\norm{v^* - \sum_{k=0}^\infty c_k \left(v_{L,k}^R(0),\p_tv_{L,k}^R(0)\right)}_{\dot{H}^1_R\times L^2_R} < \e.
\]
Let $w$ denote the solution in $\{r>R\}\times (-\infty,\infty)$ to \eqref{LW} in $\{\vert x\vert \geq R\}$ with initial condition $(v^* - \sum_{k=0}^\infty c_k \left(v_{L,k}^R(0),\p_t v_{L,k}^R(0)\right))1_{\vert x\vert > R}.$ Then by Lemma 2.1 in \cite{exterior10} we have that $\sup_{t \in\R} \norm{(w,\p_t w)}_{\dot{H}^1\times L^2} < C.$ This implies that $v^R$ is then in the $L^\infty_t (\dot{H}^1\times L^2)$ closure of the convex hull of $\{v_n^R\}_{n \geq N_0}.$ Thus combining this with \eqref{eq9} we obtain that
\[
\lim_{\vert t\vert\to \infty} \int_{r\geq R+\vert t\vert} \vert \nabla_{t,x} v_L^R\vert^2 = 0.
\]
Now using Proposition \ref{rigidity} we see that there exists a coefficient $c=c(R)$ such that $v_L^R = c(R)\overrightarrow{S}_0^{\infty,\sigma}$ where $\sigma=0$ if $N\equiv 0 \mod 4$ and $\sigma=1$ if $N \equiv 2 \mod 4$ and where $\overrightarrow{S}_0^{\infty,\sigma}$ denotes the $\sigma$-th coordinate of $\overrightarrow{S}_0^\infty.$ Since $v_L^{R_1}(0),$ $v_L^{R_2}(0)$ coincide for $\vert x\vert \geq \max\{R_1,R_2\},$ the coefficient $c(R)$ is independent of $R$ and so we see that $v_L^R=0$ and therefore $v^* = 0.$ Now the claim has been established.

We let $v_{F,n}$ be the free wave $(\p_{tt}-\Delta)v_{F,n}=0$ where $\overrightarrow{v}_{F,n}$ has initial conditions $v_n.$ Then by Lemma 3.7 in \cite{exterior10} together with the contradiction hypothesis we have
\[
\lim_{n\to\infty}\lim_{t\to \infty}\int_{\vert x\vert \geq R_n+\vert t\vert} \vert \nabla_{t,x}v_{F,n}\vert^2 dx\geq C\norm{v_n}_{\dot{H}^1\times L^2}^2.
\]
Since $v_n\rightharpoonup v^*=0$ in $\dot{H}^1 \times L^2$ this yields a contradiction.
\\

\underline{Case 3:} For all $n,$ $R_n \in (0,1)$ and $\lim_{n \to \infty} R_n = R_\infty \in(0,1].$  Suppose $R_n \in (0,1)$ and that $\lim_{n \to \infty} R_n = R_\infty \in (0,1].$ By \eqref{eq10}, we have that $\Pi_{R_n}^\perp u_n = \Pi_{R_n}^\perp v_n,$ and so $\norm{v_n}_{\dot{H}^1_{R_n}\times L^2_{R_n}} \geq 1.$ Let
\[
\tilde{u}_n = \frac{u_n}{\norm{v_n}_{\dot{H}^1_{R_n}\times L^2_{R_n}}},\; \tilde{v}_n = \frac{v_n}{\norm{v_n}_{\dot{H}^1_{R_n}\times L^2_{R_n}}}
\]
and similarly let $\tilde{u}_{L,n}$ be the solution to \eqref{LW} with initial condition $\tilde{u}_n.$ Then note that
\[
\tilde{u}_n = \sum_{k=0}^{\frac{N-6}{2}} \tilde{c}_{k,n} T_k^\infty + \tilde{v}_n
\]
with $\tilde{c}_{k,n} = \frac{c_{k,n}}{\norm{v_n}_{\dot{H}^1_{R_n}\times L^2_{R_n}}},$ $\tilde{v}_n \perp_{\dot{H}^1\times L^2} \overrightarrow{T}_k^\infty,$ $\norm{v_n}_{\dot{H}^1_{R_n}\times L^2_{R_n}}=1,$ $R_n \to R_\infty >0,$ and
\[
\lim_{n \to \infty} \lim_{t\to \infty} \int_{\vert x\vert > R_n+\vert t\vert}\vert \nabla_{t,x} \tilde{u}_{L,n}(t,x)\vert^2 dx = 0.
\]
Now we continue as in case 1.\\

\underline{Case 4:} For all $n,$ $R_n \in (0,1)$ and $\lim_{n \to \infty} R_n = 0.$
The proof is very similar to that of case 3 in the proof of Lemma 3.1 in \cite{cdkm}. One can bound the asymptotic exterior energy of the solutions to \eqref{LW} with initial conditions given by $(T_k^0,0)$ and $(0,T_k^0).$ Then one can expand $u_n$ as
\[
u_n= \sum_{k=0}^{\frac{N-6}{2}}c_{k,n}^\infty \overrightarrow{T}_k^\infty + c_{k,n}^0 \chi_0\overrightarrow{T}_k^0 + \Pi^\perp u_n
\]
\[
u_n = \sum_{k=0}^{\frac{N-6}{2}} \tilde{c}_{k,n}^\infty \overrightarrow{T}_k^\infty +v_n.
\]
Let $\tilde{S}_k^0$ denote the solution to \eqref{LW} with initial condition $\chi_0 \overrightarrow{T}_k^0.$ Let $R>0.$ Since $\chi_0 \overrightarrow{T}_k^0 \notin \mathrm{Span}(\{T_k^\infty\}_{k=0}^{\frac{N-6}{2}}),$ by Proposition \ref{rigidity} we have that there exists a constant $C$ such that
\[
 \frac{1}{C} \leq \lim_{t \to \pm\infty} \int_{\vert x\vert \geq R+\vert t\vert} \vert \nabla \tilde{S}_k^0\vert^2 dx.
\]
On the other hand, letting $\Xi_k := \tilde{S}_k^0 - \chi_0 S_k^{0,\sigma}$ where $\sigma = 0$ if $N \equiv 0\mod 4$ and $\sigma =1$ if $N \equiv 2 \mod 4,$ we have
\[
\left\{
\begin{matrix}
(\p_t^2-\Delta + V)\Xi_K = -F_K\\
(\Xi_K,\p_t \Xi_K)\vert_{t=0} = (0,0),
\end{matrix}
\right.
\]
where
\[
F_K:= \chi_0 \left(\sum_{k=1}^K \frac{t^{2(k-1)}}{(2(k-1))!}T_{k-i}\right) + (-\Delta+V)\left(\chi_0\left(\sum_{k=0}^K \frac{t^{2i}}{(2i)!} T_{k-i}\right)\right)
\]
which is $0$ if $r \leq 10$ or $r \geq 12.$ From this one obtains as in the proof of \eqref{eq11}, with standard energy estimates that there is a constant $C$ such that
\begin{equation}\label{eq12}
\frac{1}{C} \leq \lim_{t \to \infty}\int_{\vert x\vert \geq R+\vert t\vert} \vert \nabla \tilde{S}_K^0\vert^2 dx\leq C.
\end{equation}
We take the exterior energy on both sides to obtain
\begin{eqnarray*}
\sum_{k_1,k_2=0}^{\frac{N-6}{2}} a_{k_1k_2} c_{k_1,n}^0 c_{k_2,n}^0 &=& \lim_{t \to \infty} \int_{\vert x\vert > R+\vert t\vert} \left\vert \nabla_{t,x} \sum_{k=0}^{\frac{N-6}{2}} c_{k,n}^0 \tilde{S}_k^0\right\vert^2 dx\\
&\lesssim & \lim_{t\to \infty} \int_{\vert x\vert > R+\vert t\vert} \vert \nabla_{t,x} u_{L,n}\vert^2 + \vert \nabla_{t,x}\bar{w}_{L,n}\vert^2 + \left\vert \nabla_{t,x} \sum_{k=0}^{\frac{N-6}{2}} c_{k,n}^\infty c_{k,n}^\infty S_k^\infty\right\vert^2 dx
\end{eqnarray*}
which is uniformly bounded in $n,$
where
\[
a_{k_1k_2} := \lim_{t\to \pm\infty} \int_{\vert x\vert > R+\vert t\vert} \nabla_{t,x} \tilde{S}_{k_1}^0 \cdot \nabla_{t,x} \tilde{S}_{k_2}^0 dx.
\]
Note that the matrix $A:=(a_{k_1k_2})_{0 \leq k_1,k_2 \leq (N-6)/2}$ is symmetric, positive semi-definite. We claim that it is also invertible, hence strictly positive definite. To see this, suppose that $v = (v_k)_{0\leq k \leq N-6/2}$ lies in its kernel. Letting $f := \sum_{k=0}^{\frac{N-6}{2}} v_k \tilde{S}_k^0$ we obtain
\begin{eqnarray*}
0&=& \sum_{i=0}^{\frac{N-6}{2}} v_i \lim_{t\to\infty} \int_{\vert x\vert > R=\vert t\vert} \nabla_{t,x}\tilde{S}_k^0 \cdot \nabla_{t,x}\tilde{S}_i^0\\
&=& \lim_{t \to \infty} \int_{\vert x\vert > R=\vert t\vert} \nabla_{t,x} \tilde{S}_k^0 \cdot \nabla_{t,x} f.
\end{eqnarray*}
Multiplying the above identity by $v_k$ and adding over all $k$ we obtain that
\[
\lim_{t\to\infty} \int_{\vert x\vert > R+\vert t\vert} \vert \nabla_{t,x} f\vert^2 = 0.
\]
By Lemma \ref{rigidity} we have that $f \in \mathrm{Span}\{S_k^\infty\}_{0\leq k \leq N-6/2}.$ Thus $f \in \mathrm{Span}\{S_k^\infty\}_{0\leq k \leq N-6/2} \cap \mathrm{Span}\{S_k^0\}_{0\leq k \leq N-6/2}.$ This intersection is $\{0\}$ as can be seen by successively taking $(-\Delta+V)$ on an equality of the form $\sum_{k=0}^{\frac{N-6}{2}} d_k T_k^\infty = \sum_{k=0}^{\frac{N-6}{2}} d_k T_k^0.$ Therefore $A$ is invertible.

From this we conclude that $c_{k,n}^0$ is bounded. Now we have
\[
\sum_{k=0}^{\frac{N-6}{2}} (c_{k,n}^\infty - \tilde{c}_{k,n}^\infty) \overrightarrow{T}_k^\infty = \sum_{k=0}^{\frac{N-6}{2}} c_{k,n}^0 (\tilde{S}_k^0(0),\p_t \tilde{S}_k^0(0)) + \Pi^\perp u_n - v_n.
\]
Therefore taking the $\dot{H}^1\times L^2$ inner product with $\{(\tilde{S}_k^0(0),\p_t \tilde{S}_k^0(0))\}_{k=0}^{\frac{N-6}{2}},$ we obtain
\[
B\overrightarrow{c} = \overrightarrow{b},
\]
where
\[
\overrightarrow{c} = (c_{k,n}^\infty-\tilde{c}_{k,n}^\infty)_{k=0}^{\frac{N-6}{2}},
\]
\[
\overrightarrow{b} = \left(\sum_{i=0}^{\frac{N-6}{2}}c_{k,n}^0 \langle (\tilde{S}_i^0(0),\p_t \tilde{S}_i^0(0)),\overrightarrow{T}_k^\infty\rangle_{\mathcal{H}_{R_n}}\right)_{k=0}^{\frac{N-6}{2}} + (\langle (\Pi^\perp u_n - v_n), \tilde{T}_k^\infty\rangle_{\dot{H}^1\times L^2})_{k=0}^{\frac{N-6}{2}}
\]
and
\[
B=(b_{ij})_{0 \leq i,j\leq \frac{N-6}{2}}
\]
with
\[
b_{k_1k_2}=\langle T_{k_1}^\infty,T_{k_2}^\infty\rangle_{\mathcal{H}_{R_n}}.
\]
Since $B$ is invertible by the same argument as that for $A$ above, we obtain that $c_{k,n}-\tilde{c}_{k,n}$ is bounded since $\overrightarrow{b}$ is bounded. Thus in particular $c_{k,n}^\infty-\tilde{c}_{k,n}^\infty$ is bounded. Now we have
\[
\sum_{k=0}^{\frac{N-6}{2}} (c_{k,n}^\infty - \tilde{c}_{k,n}^\infty) \overrightarrow{T}_k^\infty = \sum_{k=0}^{\frac{N-6}{2}}c_{k,n}^0 (\tilde{S}_k^0(0),\p_t \tilde{S}_k^0(0)) + \Pi^\perp u_n - v_n.
\]
Therefore taking the $\dot{H}^1_{R_n}\times L^2_{R_n}$ inner product with $\{(\tilde{S}_k^0(0),\p_t \tilde{S}_k^0(0))\}_{k=0}^{\frac{N-6}{2}},$ we obtain
\[
B\overrightarrow{c} = \overrightarrow{b},
\]
where
\[
\overrightarrow{c} = (c_{k,n}^\infty-\tilde{c}_{k,n}^\infty)_{k=0}^{\frac{N-6}{2}},
\]
\[
\overrightarrow{b} = \sum_{i=0}^{\frac{N-6}{2}}\left(c_{i,n}^0 \langle (\tilde{S}_i^0(0),\p_t \tilde{S}_i^0(0)),\overrightarrow{T}_k^\infty\rangle_{\mathcal{A}_N}\right)_{k=0}^{\frac{N-6}{2}} + (\langle (\Pi^\perp u_n - v_n), \overrightarrow{T}_k^\infty\rangle_{\dot{H}^1\times L^2})_{k=0}^{\frac{N-6}{2}}
\]
and
\[
B=(b_{ij})_{0 \leq ij\leq \frac{N-6}{2}}
\]
with
\[
b_{k_1k_2}=\langle T_{k_1}^\infty,T_{k_2}^\infty\rangle_{\mathcal{A},R_n}.
\]
Since $B$ is invertible by the same argument as that for $A$ above, we obtain that $\overrightarrow{c} = B^{-1}\overrightarrow{b}.$ Noting that $b_{0k},b_{k0}$ for $0 \leq k \leq \frac{N-6}{2}$ are of order $O(1)$ while all other entries are of order $R_n^{-N}$ if $N \equiv 0 \mod 4,$ $R_n^{-(N-2)}$ if $N\equiv 2 \mod 4,$ we see that
\[
\mathrm{det} B =
\left\{
\begin{matrix}
O(R_n^{-N \frac{N-6}{2}}) & \mbox{ if } N \equiv 0 \mod 4\\
O(R_n^{-(N-2) \frac{N-6}{2}}) & \mbox{ if } N \equiv 2 \mod 4
\end{matrix}
\right.
\]
and so if $N \equiv 0 \mod 4$ (resp. $N \equiv 2 \mod 4$) the coordinates of $B^{-1}$ are of order $R_n^N$ (resp. $R_n^{-N+2}$), except for $b^{00}$ which is of order $O(1).$ This implies that $\overrightarrow{c}$ is bounded. Thus $v_{1,n}$ is bounded in $\dot{H}^1(r^{2N-\frac{5}{2}}\langle r\rangle^{-(N-\frac{3}{2})}dr)$ if $N \equiv 0\mod 4$ or $L^2(r^{2N-\frac{9}{2}}\langle r\rangle^{-(N-\frac{7}{2})}dr)$ if $N \equiv 2\mod 4.$ Thus in particular $c_{k,n}^\infty-\tilde{c}_{k,n}^\infty$ is bounded, and thus after proceeding as in case 1 using Lemma \ref{rigidity} one obtains that $v_n\rightharpoonup 0$ in $\dot{H}^1_R\times L^2_R$ for any $R>0,$ and so 
\[
v_n\rightharpoonup 0
\]
in $\dot{H}^1(r^{2N-\frac{5}{2}}\langle r\rangle^{-\left(N-\frac{3}{2}\right)})\times L^2(r^{2N-\frac{9}{2}}\langle r\rangle^{-\left(N-\frac{7}{2}\right)}).$ Next, we decompose $v_n$ as
\[
v_n = \sum_{i=0}^{\frac{N-6}{2}}\bar{c}_n^i (\chi_0T_i^0,0) + w_n
\]
if $N \equiv 0 \mod 4,$
\[
v_n = \sum_{i=0}^{\frac{N-6}{2}}\bar{c}_n^i (0,\chi_0T_i^0) + w_n
\]
if $N \equiv 2 \mod 4,$ where $w_n \perp_{\dot{H}^1(r^{2N-\frac{5}{2}}\langle r\rangle^{-\left(N-\frac{3}{2}\right)})\times L^2(r^{2N-\frac{9}{2}}\langle r\rangle^{-\left(N-\frac{7}{2}\right)})} (T_i^0,0)$. We extend $w_n$ trivially for $r\leq R_n.$ Note that
\[
\norm{w_n}_{\dot{H}^1_{R_n}\times L_{R_n}^2}^2 + \sum_{i=0}^{\frac{N-6}{2}}(c_{i,n}^0-\bar{c}_n^i)^2\norm{\chi_0 T_i^0}_{\dot{H}^1}^2 \lesssim \norm{\Pi^\perp u_n}_{\dot{H}^1_{R_n}\times L^2_{R_n}}^2 + \sum_{k=0}^{\frac{N-6}{2}} (c_{k,n}^\infty - \tilde{c}_{k,n}^\infty)^2 \norm{T_k^\infty}_{\dot{H}^1_{R_n}}^2
\]
if $N \equiv 0 \mod 4,$
\[
\norm{w_n}_{\dot{H}^1_{R_n}\times L_{R_n}^2}^2 + \sum_{i=0}^{\frac{N-6}{2}}(c_{i,n}^0-\bar{c}_n^i)^2\norm{\chi_0 T_i^0}_{L^2_{R_n}}^2 \lesssim \norm{\Pi^\perp u_n}_{\dot{H}^1_{R_n}\times L^2_{R_n}}^2 + \sum_{k=0}^{\frac{N-6}{2}} (c_{k,n}^\infty - \tilde{c}_{k,n}^\infty)^2 \norm{T_k^\infty}_{L^2_{R_n}}^2
\]
if $N \equiv 2 \mod 4.$ Now after proceeding as in step 3 of the proof of Lemma 3.1 in \cite{cdkm}, we obtain that $w_n$ converges to $(0,0)$ weakly in $\dot{H}^1(r^{2N-\frac{5}{2}}\langle r\rangle^{-\left(N-\frac{3}{2}\right)})\times L^2(r^{2N-\frac{9}{2}}\langle r\rangle^{-\left(N-\frac{7}{2}\right)}).$ Now we can proceed as in step 4 of the proof of Lemma 3.1 in \cite{cdkm}. 
Letting $w_{F,n}$ be the free wave with initial condition $w_n$ and $w_{L,n}$ the free wave with initial condition $w_n,$ we obtain
\[
\lim_{n\to \infty} \left(\lim_{t\to\infty} \int_{\vert x\vert \geq R_n+\vert t\vert} \vert \nabla_{t,x} w_{F,n}\vert^2\right) = 0.
\]
Moreover we have
\begin{eqnarray*}
&&\left(\norm{\frac{1}{r^{N-2k}}}_{\dot{H}^1_{R_n}}\right)^{-1}\left\vert \int_{r \geq R_n} \nabla w_{F,n} \cdot \nabla \frac{1}{r^{N-2k}} r^{N-1}dr \right\vert\\
&&= \left(\norm{\frac{1}{r^{N-2k}}}_{\dot{H}^1_{R_n}}\right)^{-1}\left\vert \int_{r \geq R_n} \nabla w_{F,n} \cdot \nabla \left(\frac{1}{r^{N-2k}}-\chi_0 T_k^0\right) r^{N-1}dr \right\vert\\
&&\lesssim \left(\norm{\frac{1}{r^{N-2k}}}_{\dot{H}^1_{R_n}}\right)^{-1}\norm{\frac{1}{r^{N-2k}}-\chi_0 T_k^0}_{\dot{H}^1_{R_n}}\to 0
\end{eqnarray*}
if $N \equiv 0 \mod 4$ and similarly for the case $N \equiv 2 \mod 4.$ However,
\[
\norm{w_n}_{\dot{H}^1_{R_n}\times L^2_{R_n}} \geq \norm{\Pi^\perp w_n}_{\dot{H}^1_{R_n}} = \norm{\Pi^\perp u_n}_{\dot{H}^1_{R_n}} = 1
\]
if $N \equiv 0 \mod 4,$ and
\[
\norm{w_n}_{\dot{H}^1_{R_n}\times L^2_{R_n}} \geq \norm{\Pi^\perp w_n}_{L^2_{R_n}} = \norm{\Pi^\perp u_n}_{L^2_{R_n}} = 1
\]
if $N \equiv 2 \mod 4,$ which contradicts Lemma \ref{2.3}.
\end{proof}

\section{Single soliton case (resonant component)}

Let $\chi_0$ be a smooth cutoff such that
\[
\chi^0(r) =
\left\{
\begin{matrix}
1 & \mbox{ if } r \leq 10\\
0 & \mbox{ if } r \geq 11.
\end{matrix}
\right.
\]
\[
\Pi_{L_R^2}^\perp =
\left\{
\begin{matrix}
\Pi_{L_R^2}(\mathrm{span}\{T_0^\infty, \ldots T_{\frac{N-6}{2}}^\infty\})^{\perp} \mbox{ if } R \geq 1\\
\Pi_{L_R^2}(\mathrm{span}\{T_0^\infty, \ldots T_{\frac{N-6}{2}}^\infty,\chi^0 T_0^0, \ldots,\chi^0 T_{\frac{N-6}{2}}^0\})^{\perp} \mbox{ if } 0<R\leq 1,
\end{matrix}
\right.
\]
and similarly
\[
\Pi_{\dot{H}^1_R}^\perp =
\left\{
\begin{matrix}
\Pi_{\dot{H}^1_R}(\mathrm{span}\{T_0^\infty, \ldots T_{\frac{N-6}{2}}^\infty\})^{\perp} \mbox{ if } R \geq 1\\
\Pi_{\dot{H}^1_R}(\mathrm{span}\{T_0^\infty, \ldots T_{\frac{N-6}{2}}^\infty,\chi^0 T_0^0, \ldots,\chi^0 T_{\frac{N-6}{2}}^0\})^{\perp} \mbox{ if } 0<R\leq 1.
\end{matrix}
\right.
\]

Let
\[
\norm{f}_{Z_{\alpha}}=\sup_{R>0} \frac{R^{-\frac{N}{2}-\alpha}}{\langle \log R\rangle} \left( \int_{R < \vert x\vert <2R} f^2(x)dx\right)^{\frac{1}{2}},
\]
\[
\norm{f}_{Z_{\alpha,R}}=\sup_{\rho\geq R} \frac{\rho^{-\frac{N}{2}-\alpha}}{\langle \log \frac{\rho}{\langle R\rangle}\rangle} \left( \int_{\rho < \vert x\vert <2\rho} f^2(x)dx\right)^{\frac{1}{2}},
\]
and
\[
d_{Z_{\alpha,R}}(u,E):= \inf_{v \in E} \norm{u-v}_{Z_{\alpha,R}}.
\]
We will prove the following.
\begin{lemma}\label{otherpart}
Suppose $u_L$ is a radial solution to \eqref{LW}. Then
\[
\left.
\begin{matrix}
\norm{\Pi_{L^2}^\perp u_1}_{Z_{-4}}^2 & \mbox{ if } N \equiv 0 \mod 4\\
\norm{\nabla \Pi_{\dot{H}^1}^{\perp} u_0}_{Z_{-3}}^2& \mbox{ if }N\equiv 2 \mod 4
\end{matrix}
\right\}
\leq C \sum_{\pm}\lim_{t \to \pm \infty} \int_{r > \vert t\vert} \vert \nabla_{t,x} u_L(t,x)\vert^2 dx.
\]
\end{lemma}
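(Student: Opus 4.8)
I would follow the low-dimensional scheme of \cite{cdkm}: compare $u_L$ with the free wave carrying the same Cauchy data, invoke the sharp exterior-energy estimate for radial free waves in even dimensions — which controls, in a logarithmically weighted $L^2$ space, precisely the Cauchy component \emph{complementary} to the one handled by Proposition \ref{2.3} — and then absorb the error created by the potential $V$.

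\textbf{Reduction to the free wave and control of $E_{\mathrm{out}}$.} Write $u_L = u_F + w$, where $\partial_{tt}u_F - \Delta u_F = 0$ with $(u_F,\partial_t u_F)|_{t=0} = (u_0,u_1)$ and $w$ solves the inhomogeneous free wave equation with forcing $-Vu_L$ and zero Cauchy data. Since $u_F$ and $u_L$ carry the same data, $\Pi_{L^2}^\perp u_1$ (resp.\ $\nabla\Pi_{\dot{H}^1}^\perp u_0$) is a Cauchy datum of $u_F$, so it suffices to run the free-wave estimate on $u_F$, provided one knows $E_{\mathrm{out}}(u_F)\lesssim E_{\mathrm{out}}(u_L)$. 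For this, $|\nabla_{t,x}u_F|^2\le 2|\nabla_{t,x}u_L|^2 + 2|\nabla_{t,x}w|^2$ reduces matters to $\limsup_{t\to\pm\infty}\int_{r>|t|}|\nabla_{t,x}w|^2\lesssim E_{\mathrm{out}}(u_L)$, and by the exterior-cone energy estimate this is $\lesssim \|Vu_L\|_{L^1_tL^2_r(\mathcal{C}_{0,0})}^2$. Split the cone at a large radius $R_\ast$: on $\{r\ge R_\ast+|t|\}$ the decay $|V|\sim r^{-4}$ gives $\|V\|_{L^2_tL^{2N/3}_r(r\ge R_\ast+|t|)}\lesssim R_\ast^{-2}$, so Hölder together with Lemma \ref{extstrichartz} yields, after absorbing the self-referential term for $R_\ast$ large, $\|u_L\|_{L^2_tL^{2N/(N-3)}_r(r\ge R_\ast+|t|)}\lesssim\|(u_0,u_1)\|_{\dot{H}^1_{R_\ast}\times L^2_{R_\ast}}$; and Lemma \ref{notinterestingsingle} — using that $\Lambda W$ decays like $r^{-(N-2)}$, so passing from $\|\Pi_{R_\ast}^\perp(u_0,u_1)\|$ to the full exterior norm on $\{r\ge R_\ast\}$ only costs bounded coefficients times $\sqrt{E_{\mathrm{out}}}$-controlled quantities — bounds the right side by $\sqrt{E_{\mathrm{out}}(u_L)}$. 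On the bounded part $\{|t|<r\le R_\ast\}$, $V$ is merely bounded but the region is compact in space-time, and the same exterior-Strichartz control of $u_L$ there contributes a further term $\lesssim\sqrt{E_{\mathrm{out}}(u_L)}$. Hence $E_{\mathrm{out}}(u_F)\lesssim E_{\mathrm{out}}(u_L)$.

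\textbf{The free-wave input and reconciliation of the projection.} I would then invoke (or, if needed, establish along the lines of the representation-formula computations in \cite{cdkm,radfields,energypartition,corotational,ogchannels}) the even-dimensional exterior-energy estimate in weighted form, the companion of Proposition \ref{2.3}: for a radial free wave in $\R^N$ with $N\equiv 0 \mod 4$ one has $\|\partial_t u_F(0) - \Pi\,\partial_t u_F(0)\|_{Z_{-4}}\lesssim\sqrt{E_{\mathrm{out}}}$, where $\Pi$ removes the finitely many homogeneous "invisible" profiles $|x|^{-(N-2k)}$ annihilated by the free outer-energy functional, with the analogous statement for $\nabla u_0$ in $Z_{-3}$ when $N\equiv 2 \mod 4$. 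It remains to replace $\Pi$ by the projection $\Pi^\perp$ of the statement: near infinity $\Lambda W$ is a linear combination of the $|x|^{-(N-2k)}$ up to a remainder of finite, $\sqrt{E_{\mathrm{out}}}$-controlled $Z$-norm, while near the origin one compensates using the singular generalized solutions $\chi_0 T_k^0$ together with the relations $(-\Delta+V)T_k^0 = -T_{k-1}^0$ from Proposition \ref{varofpars}; the resulting discrepancies are all bounded by $\sqrt{E_{\mathrm{out}}}$ using the previous paragraph and the boundedness of the relevant coefficient matrices.

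\textbf{Main obstacle.} The difficulty is twofold. First, the free-wave weighted estimate in high even dimensions with the correct $\langle\log R\rangle$ weight: the number of homogeneous profiles $|x|^{-(N-2k)}$ killed by the outer-energy functional grows linearly in $N$, so the projection $\Pi$ — and its reconciliation with $\Pi^\perp$ — is governed by an $N$-dependent linear system that must be inverted with a controlled inverse, exactly the kind of argument carried out for the matrices $A_0(R')$, $A$, $B$ in the proof of Lemma \ref{notinterestingsingle}. Second, the apparent circularity in the perturbation step — one cannot feed the global $\dot{H}^1\times L^2$ norm of $(u_0,u_1)$ into Lemma \ref{extstrichartz}, since that norm is precisely what fails to be $\sqrt{E_{\mathrm{out}}}$-controlled — which is resolved by running the Duhamel/Strichartz bootstrap only on far exterior cones, where the relevant norm of $V$ is $O(R_\ast^{-2})$, and by invoking Lemma \ref{notinterestingsingle} to control the exterior Cauchy data there.
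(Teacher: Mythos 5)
Your proposed proof takes a genuinely different route from the paper and, as written, contains several gaps that are not cosmetic.

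\textbf{The paper's route.} The paper proves Lemma \ref{otherpart} by combining Lemma \ref{56equiv}, Lemma \ref{0improv}, and Lemma \ref{notinterestingsingle}. The engine is a dyadic/telescoping argument across exterior cones $\{r>R_k+|t|\}$, $R_k=2^k$, exploiting the recursive structure $(-\Delta+V)T_{k+1}^{\bullet}=-T_k^{\bullet}$ from Proposition \ref{varofpars} to eliminate the coefficients of the generalized eigenfunctions, followed by an elliptic improvement via the averaging operator $\mathcal{A}$ (for $N\equiv 0\bmod 4$). The free wave never enters the perturbative picture; instead the proof lives entirely inside the ODE structure of $-\Delta+V$ and the exterior-energy estimate Proposition \ref{2.3}.

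\textbf{Gap 1: the postulated free-wave companion estimate.} Your argument hinges on an estimate of the form $\|\partial_t u_F(0)-\Pi\,\partial_t u_F(0)\|_{Z_{-4}}\lesssim\sqrt{E_{\mathrm{out}}(u_F)}$ for the \emph{free} radial wave, where $\Pi$ removes the homogeneous profiles $|x|^{-(N-2k)}$. You flag that this might need to be ``established along the lines of the representation-formula computations,'' but it is neither stated nor used anywhere in this paper, and its proof would be a substantial piece of work in its own right (handling a $\Theta(N)$-dimensional kernel with sharp logarithmic losses). In fact, the paper's entire Section 5 is designed precisely so that this step can be \emph{bypassed}: the recursion for the $T_k^\bullet$'s plays the role that the homogeneous free profiles would play in your argument, but the matching at the level of the potential is automatic, whereas for the free wave it is not.

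\textbf{Gap 2: the bound $E_{\mathrm{out}}(u_F)\lesssim E_{\mathrm{out}}(u_L)$.} Your bootstrap runs on far exterior cones $\{r\ge R_*+|t|\}$ and ultimately requires controlling $\|(u_0,u_1)\|_{\dot H^1_{R_*}\times L^2_{R_*}}$ by $\sqrt{E_{\mathrm{out}}}$. But Lemma \ref{notinterestingsingle} controls only the \emph{projected} norm $\|\Pi^\perp_{R_*}(u_0,u_1)\|$. For $R_*\ge 1$ this projection removes the full $\frac{N-4}{2}$-dimensional span of $\{T_k^\infty\}_{k=0}^{(N-6)/2}$, and the corresponding coefficients are genuinely uncontrolled at that scale: every $\vec S_k^{\infty,\sigma}$ is non-radiative on $\{r\ge R_*+|t|\}$ (Proposition \ref{rigidity}), so a data piece proportional to $T_k^\infty$ on $\{r\ge R_*\}$ contributes nothing to the exterior energy at scale $R_*$ while contributing at full strength to $\|Vu_L\|_{L^1_tL^2_x}$ on that cone. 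Your sentence ``passing from $\|\Pi^\perp_{R_*}(u_0,u_1)\|$ to the full exterior norm only costs bounded coefficients times $\sqrt{E_{\mathrm{out}}}$-controlled quantities'' is therefore not correct; those coefficients are exactly what the paper must work so hard to track across scales (this is the purpose of the matrices $B$ and the telescoping estimates \eqref{eq108}--\eqref{eq111}).

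\textbf{Gap 3: projection reconciliation.} Even granting the companion free-wave estimate, the passage from ``modulo $\mathrm{Span}\{|x|^{-(N-2k)}\}$'' to ``modulo $\mathrm{Span}\{\Lambda W\}$'' is non-trivial. The two subspaces agree at one leading order near infinity but not elsewhere, and the number of invisible free profiles ($\frac{N}{4}$ or $\frac{N-2}{4}$) is strictly larger than one. The paper resolves this by the same elimination machinery that handles Gap 2 (and by the duality test functions $\Phi_k,\tilde\Phi_k$ in the proof of Lemma \ref{56equiv}); your one-sentence sketch does not supply a substitute.

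In short, the comparison-with-the-free-wave strategy is an appealing first idea, but to make it close you would essentially need to reproduce the hard content of Lemmas \ref{56equiv} and \ref{0improv} anyway, plus prove a free-wave estimate the paper manages to avoid.
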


We will prove the cases $N \equiv 0 \mod 4$ and $N \equiv 2\mod 4$ separately, in subsection \ref{other0} and \ref{other2}

\subsection{The $N\equiv 0 \mod 4$ case}\label{other0}

Let
\[
\norm{u_L}_{\tilde{Y}}:=\sup_{t\in \R,R>\vert t\vert} \norm{\Pi_{\dot{H}^1_R}^\perp u_L(t)}_{\dot{H}_R^1}
\]
and similarly let
\[
\norm{u_L}_{\tilde{Y}_R}:=\sup_{t\in \R,\rho>\vert t\vert+R} \norm{\Pi_{\dot{H}^1_\rho}^\perp u_L(t)}_{\dot{H}_\rho^1}.
\]
We also let
\[
\mathcal{A}f(r) := \int_r^\infty \rho f(\rho) d\rho, \; \mathcal{A}^{-1} = -\frac{1}{r}\p_r f(r).
\]
We will need the following lemma.
\begin{lemma}\label{56equiv}
Suppose that $u_L$ is a linear radial solution to \eqref{LW}. Let $\Pi_{L^2}^\perp$ be defined as in \eqref{Pi}. Suppose that $u_L$ is odd in time. Then if $N \equiv 0 \mod 4$ and $N \geq 10,$ then
\[
\norm{\mathcal{A}\Pi_{L^2}^\perp \p_t u_L(0)}_{Z_{-2}}\lesssim\norm{u_L}_{\tilde{Y}}.
\]
\end{lemma}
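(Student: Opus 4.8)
The plan is to reduce the estimate on $\mathcal{A}\Pi_{L^2}^\perp \p_t u_L(0)$ in the $Z_{-2}$ norm to a control on $\Pi_{L^2}^\perp \p_t u_L(0)$ itself together with the hypothesis $\norm{u_L}_{\tilde Y}<\infty$. First I would record the algebra: since $\mathcal A f(r) = \int_r^\infty \rho f(\rho)\,d\rho$ is (up to sign) an antiderivative, $\mathcal A$ trades one power of $r$ and integrates a tail, so heuristically $\norm{\mathcal A g}_{Z_{-2}}$ is controlled by $\norm{g}_{Z_{-3}}$ plus boundary contributions coming from the slow decay of $g$ at infinity. The point of the projection $\Pi_{L^2}^\perp$ is exactly to kill the components of $\p_t u_L(0)$ that decay like the $r^{-(N-2k)}$ in \eqref{span2} (equivalently, the $T_k^\infty$ directions), which are the ones that would otherwise make the tail integral in $\mathcal A$ diverge or pick up a non-integrable contribution. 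So the first step is to expand $\p_t u_L(0)$ against the generalized eigenfunctions $T_k^\infty$ (and, near the light cone, the $\chi^0 T_k^0$), using Proposition \ref{varofpars} and Lemma \ref{decompo} for their precise asymptotics $a_k^\infty, b_k^\infty, a_k^0, b_k^0$, and to verify that after removing these directions what remains lies in $Z_{-3}$ (or better) with norm $\lesssim \norm{u_L}_{\tilde Y}$.

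Next I would set up the connection to $\tilde Y$. The norm $\norm{u_L}_{\tilde Y} = \sup_{t,\, R>|t|}\norm{\Pi_{\dot H^1_R}^\perp u_L(t)}_{\dot H^1_R}$ gives uniform-in-$t$ control of the exterior $\dot H^1$ norm of the projected solution on every exterior cone $\{|x|>|t|+0\}$. Evaluating at $t=0$ and using that $u_L$ is odd in time (so $u_L(0)=0$ and $\p_t u_L(0)$ is the only surviving datum, and $\p_t u_L$ is even, $\p_{tt}u_L$ odd), I would differentiate the equation \eqref{LW} in $t$ and use that $\mathcal A^{-1} = -\tfrac1r\p_r$ essentially inverts $\mathcal A$, to rewrite $\mathcal A \Pi_{L^2}^\perp \p_t u_L(0)$ in terms of $\p_r$-derivatives of the solution, i.e. in terms of quantities the $\tilde Y$ norm already measures. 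Concretely: $\p_r \mathcal A g = -r g$, so $\mathcal A g$ is recovered from $g$ by one integration; matching this against the scaling weights in $Z_{-2}$ versus the $\dot H^1_R$ norm appearing in $\tilde Y$ should give the bound on each dyadic shell $R<|x|<2R$, with the $\langle\log R\rangle$ factor in $Z_{-2}$ absorbing the at-most-logarithmic growth that the even-dimensional exterior energy estimates (Proposition \ref{2.3}, and the $Z_{\alpha}$-type spaces) are designed to handle.

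The bookkeeping on dyadic shells is the routine part: on $R<|x|<2R$ one writes $\mathcal A g(r) = \int_r^\infty \rho g(\rho)d\rho$, splits the integral into the shell $\{R<\rho<2R\}$ and the far region $\{\rho>2R\}$, estimates the shell piece by Cauchy–Schwarz against $\norm{g}_{L^2(R<|x|<2R)}$, and estimates the far piece by summing a geometric-type series of the dyadic norms $\norm{g}_{L^2(2^jR<|x|<2^{j+1}R)}$; because $g = \Pi_{L^2}^\perp\p_t u_L(0)$ has had its slowly-decaying tail directions removed, this series converges and one picks up exactly the $Z_{-2}$ scaling with at worst the $\langle \log\rangle$ loss.

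The main obstacle I expect is controlling the projection itself: one must show that $\Pi_{L^2}^\perp \p_t u_L(0)$, with the projection taken in the various $L^2_R$ senses as $R$ ranges over $(0,1)$, $\{0\}$, and $[1,\infty)$, is compatible with the $\tilde Y$ bound in a way that is uniform in the shell parameter $R$ entering $Z_{-2}$. That is, the definition of $\Pi_{L^2}^\perp$ changes with the radius, and near $R\to 0$ one additionally subtracts the $\chi^0 T_k^0$ directions; one has to check that the extra terms $\chi^0 T_k^0$ (which are compactly supported in $r\le 11$) do not spoil the $Z_{-2}$ estimate on small shells, and that the difference between projecting in $L^2_R$ for different $R$ contributes only controllable errors — this is precisely the ``linear systems'' phenomenon the introduction warns about, and it will require inverting a (dimension-dependent size) Gram matrix of the $T_k^\infty$ on $\dot H^1_R$, as in the analogous argument of Lemma \ref{notinterestingsingle}, and tracking the orders $R^{-(N-2k)}$ of its entries. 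Once the uniform control of the projected datum is in hand, the passage from $\p_t u_L(0)$ to $\mathcal A \p_t u_L(0)$ and the matching of the $Z_{-2}$ weight to the $\dot H^1_R$ weight of $\tilde Y$ is a direct computation.
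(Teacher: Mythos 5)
Your proposal is missing the central mechanism of the proof, and I do not see how the strategy as outlined could close. The $\tilde Y$ norm controls $\Pi_{\dot H^1_\rho}^\perp u_L(t)$ in $\dot H^1_\rho$ for all exterior cones; it says nothing directly about $\partial_t u_L(0)$. Moreover, since $u_L$ is odd in time, $u_L(0)\equiv 0$, so evaluating the $\tilde Y$ bound at $t=0$ (as you suggest) yields the trivial statement $0=0$ and carries no information whatsoever about the time derivative. Your dyadic-shell estimate of $\mathcal A g$ (splitting into near shell plus far tail, Cauchy--Schwarz, summing a convergent series) would require as input that $g=\Pi_{L^2}^\perp\partial_t u_L(0)$ already lies in a weighted $L^2$ space like $Z_{-3}$ with norm $\lesssim\norm{u_L}_{\tilde Y}$ --- but that is essentially what one must prove, and your proposal offers no mechanism for deducing it.

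The step you are missing is the time-averaging identity that transfers information about $u_L(t)$ to $\partial_t u_L(0)$. The paper integrates by parts in time over $[0,R_k]$ with the weight $(1-t/R_k)$ and uses the equation and the vanishing $u_L(0)=0$ to write
\[
\partial_t u_L(0) = (-\Delta+V)\int_0^{R_k}\left(1-\frac{t}{R_k}\right)u_L(t)\,dt + \frac{u_L(R_k)}{R_k},
\]
which expresses $\partial_t u_L(0)$ on the shell $r\sim R_k$ in terms of time integrals of $u_L(t)$ over $|t|\lesssim R_k$ --- exactly the domain where $\tilde Y$ gives control in $\dot H^1_{R_k}$. One then inserts the decomposition $u_L(t)=\sum_j(\alpha^\infty_{j,k}T_j^\infty+\alpha^0_{j,k}\tilde T^0_j)+\tilde u_k(t)$, exploits the ladder relation $(-\Delta+V)T_{j+1}^{\infty/0}=-T_j^{\infty/0}$ from Proposition~\ref{varofpars} to shift indices, and compares the resulting representations for consecutive $k$ to obtain a telescoping difference equation for the coefficients $c^{\infty}_{j,k}$, $c^{0}_{j,k}$. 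Isolating individual coefficients requires the dual test functions $\Phi_k$, $\tilde\Phi_k$ supported in $[2R_k,3R_k]$ with the precise orthogonality properties against $T_j^\infty$, $\tilde T_j^0$, not a Gram-matrix inversion in the style of Lemma~\ref{notinterestingsingle} (which is a different argument, for a different quantity). Finally, one glues across dyadic scales with a partition of unity $\eta_k$, producing the pieces $f_k^1$, $f_k^2$, $\partial_r\tilde g_k$, and the commutator/principal term $p_1$, and estimates each in $Z_{-2,R}$ using the Hardy-type inequality $\norm{v}_{Z_{-2,R}}\lesssim\norm{\partial_r v}_{Z_{-3,R}}$.

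A secondary confusion: in the lemma statement $\Pi_{L^2}^\perp$ is the fixed global projection $\Pi_{L^2}(\mathrm{Span}(\Lambda W))^\perp$ from \eqref{Pi}; it is not the family of radius-dependent projections in \eqref{proj1}--\eqref{proj2}, nor the $\Pi_{\dot H^1_\rho}^\perp$ appearing inside $\tilde Y$. The ``inverting a Gram matrix of the $T_k^\infty$ on $\dot H^1_R$'' step you anticipate as the main obstacle is not actually part of this lemma's proof.
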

Note that the above lemma is proven in the case of $N=8$ in \cite{cdkm}.
\begin{proof}
First, note that without loss of generality, we can assume that
\begin{equation}\label{assump1}
\Pi_{L^2}^\perp u_L(0) = u_L(0).
\end{equation}
Indeed, suppose we have showed the result for solutions satisfying \eqref{assump1}. Then if $u_L$ is any solutions satisfying the hypotheses of the lemma, we let $\bar{u}_L$ denote the solution to \eqref{LW} with initial condition given by $(0,\Pi_{L^2}^\perp \p_t u_L(0)),$ and we let $v := u_L- \bar{u}_L.$ Then we have
\[
\norm{\mathcal{A}\Pi_{L^2}^\perp \p_t u_L(0)}_{Z_{-2}} = \norm{\mathcal{A} \p_t \bar{u}_L(0)}_{Z_{-2}} \lesssim\norm{\bar{u}_L}_{\tilde{Y}} \lesssim \norm{u_L}_{\tilde{Y}},
\]
where the last inequality comes from direct computation, using that by Lemma \ref{varofpars}
\[
\vert T_k^\infty(r)\vert + \left|r\frac{d}{dr}T_k^\infty(r)\right| \lesssim \langle r\rangle^{-N+2+2k}.
\]
Let $R_k:=2^k.$ Then note that
\begin{eqnarray}\label{ptdecomp2}
\p_{t} u_L(0)&=& - \int_0^{R_k}\p_{t}\left(\left(1-\frac{t}{R_k}\right)\p_t u_L(t)\right) dt\nonumber\\
 &=& \int_0^{R_k} \left(1-\frac{t}{R_k}\right)(-\p_{tt})u_L(t)dt + \frac{1}{R_k}\int_0^{R_k}\p_tu_L(t)dt\nonumber\\
 &=& (-\Delta + V)\int_0^{R_k} \left(1-\frac{t}{R_k}\right) u_L(t)dt + \frac{u_L(R_k)}{R_k}
\end{eqnarray}
where in the last line we used the fact that $u_L$ is odd in time, hence $ u_L(0)=0.$ Let $\tilde{u}_k(t) = \Pi_{\dot{H}_{R_k}^1}^\perp u_L(t).$ Writing $T_k^0$ in terms of $\tilde{T}_k^0$ by Lemma \ref{decompo}, there exist coefficients $\alpha_{j,k}^\infty = \alpha_{j,k}^\infty(t),\alpha_{j,k}^0 = \alpha_{j,k}^0(t)$ such that
\[
u_L(t)=\sum_{j=0}^{\frac{N-6}{2}} \alpha_{j,k}^\infty T_j^\infty + \alpha_{j,k}^0 \tilde{T}_j^0 + \tilde{u}_k(t)
\]
where by convention $\alpha_{j,k}^0 = 0$ if $k \geq 0$ and where we let $\tilde{T}_j^0$ be defined as in Lemma \ref{decompo}.
This implies
\begin{eqnarray}\label{ptdecomp1}
&&(-\Delta+V)\left(\int_0^{R_k}\left(1-\frac{t}{R_k}\right)u_L(t)dt\right)\nonumber\\
&&= \int_0^{R_k} \left(1-\frac{t}{R_k}\right)\left(\left(\sum_{j=0}^{\frac{N-6}{2}} -\alpha_{j+1,k}^\infty(t) T_j^\infty - \alpha_{j+1,k}^0(t)\tilde{T}_j^0(t)\right)+(-\Delta+V)\tilde{u}_k(t)\right) dt
\end{eqnarray}
whenever $r\geq R_k$ if $k\geq 0,$ or for all $R_k \leq r \leq 10$ if $k \leq -1.$ Recall that by Lemma \ref{decompo}, there exist coefficients $e_{j,i}^\infty,e_{j,i}^0$ such that
\[
\tilde{T}_i^0 := T_i^0 - \sum_{j =1}^{i+1} e_{j,i}^\infty T_j^\infty - \sum_{j=1}^{i-1} e_{j,i}^0 \tilde{T}_j^0 \sim r^2
\]
for $r \to 0.$ Plugging this and \eqref{ptdecomp1} into \eqref{ptdecomp2}, we obtain that for some coefficients $\{c_{j,k}^\infty\},\{c_{j,k}^0\}$
\begin{eqnarray*}
\p_t u(0) = \frac{1}{R_k}\tilde{u}_k(R_k) - \sum_{j=0}^{\frac{N-6}{2}} c_{j,k}^\infty T_j^\infty - \sum_{j=0}^{\frac{N-6}{2}} c_{j,k}^0 \tilde{T}_j^0 + (-\Delta+V) \left(\int_0^{R_k}\left(1-\frac{t}{R_k}\right)\tilde{u}_k(t) dt\right).
\end{eqnarray*}
Using the above equality for $k$ and $k+1$ we obtain
\begin{eqnarray}\label{elimination}
&&\sum_{j=0}^{\frac{N-6}{2}} (c_{j,k+1}^\infty - c_{j,k}^\infty)T_j^\infty + \sum_{j=0}^{\frac{N-6}{2}} (c_{j,k+1}^0 - c_{j,k}^0)\tilde{T}_j^0\nonumber\\
&&= \frac{1}{R_{k+1}}\tilde{u}_{k+1}(R_{k+1}) - \frac{1}{R_k}\tilde{u}_k(R_k)\nonumber\\
&&+ (-\Delta+V)\left(\int_0^{R_{k+1}}\left(1-\frac{t}{R_{k+1}}\right)\tilde{u}_{k+1}(t)dt-\int_0^{R_k}\left(1-\frac{t}{R_k}\right)\tilde{u}_k(t)dt\right).
\end{eqnarray}
Now we choose functions $\Phi_k,\tilde{\Phi}_k$ such that $\mathrm{Supp} \Phi_k, \mathrm{Supp} \tilde{\Phi}_k \subseteq [2R_k,3R_k],$
\[
\vert \p_r^j \Phi_k\vert \lesssim R_k^{-j}, \; \vert \p_r^j \tilde{\Phi}_k\vert \lesssim R_k^{-j}
\]
for $j = 0, \ldots , N-6$ and such that
\[
\int_{\R^N} \tilde{T}_0^0 \tilde{\Phi}_k = R_k^2, \; \int_{\R^N} \tilde{T}_j
^0 \tilde{\Phi}_k = 0 \mbox{ if }j \geq 1,
\]
\[
\int_{\R^N} T_j^\infty \tilde{\Phi}_k = 0 \mbox{ if } k \geq -1,
\]
and
\[
\int_{\R^N} T_0^\infty \Phi_k = R_k^2, \; \int_{\R^N} T_j^\infty \Phi_k = 0 \mbox{ if } j \geq 1,
\]
\[
\int_{\R^N} T_j^0 \Phi_k = 0 \mbox{ if } k \geq -1.
\]
To show this exists, we rescale by $R_k.$ Let $\tilde{\Phi}_k(r):= \tilde{\Phi}^k\left(\frac{r}{R_k}\right).$ Then we obtain the conditions
\[
\int_0^\infty \frac{\tilde{T}_0^0(R_kr)}{R_k^{-(N-2)}} \tilde{\Phi}^k(r) r^{N-1}dr = R_k^2
\]
\[
\int_0^\infty F(R_kr) \tilde{\Phi}^k(r) r^{N-1}dr = 0
\]
for all $F \in\{\tilde{T}_j^0\}_{j=1}^{\frac{N-6}{2}} \cup \{T_j^\infty\}_{j = 0}^{\frac{N-6}{2}}.$ Since $R_k^{(N-2)}\tilde{T}_0^0(R_kr) \sim c r^{-(N-2)}$ for some absolute constant $c$ as $r\to 0,$ we can take a function $\Phi^k \perp_{L^2(r^{N-1}dr)} \mathrm{Span}\{F(R_k\cdot) \vert F\in \{\tilde{T}_j^0\}_{j \geq 1} \cup \{T_j^\infty\}_{j \geq 0}\}$ with
\[
\left\langle \Phi^k,\frac{\tilde{T}_0^0(R_k \cdot)}{R_k^{-(N-2)}}\right\rangle = 1
\]
and $\vert \p_r^j \Phi^k\vert \lesssim 1$ for all $0 \leq j \leq N-6.$ We can then take $\Phi_k(r):= \tilde{\Phi}^k\left(\frac{r}{R_k}\right).$ The existence of $\tilde{\Phi}_k$ is proven similarly.

Fix any $R>0,$ and let $k_0$ be the unique integer such that $R_{k_0-2} \leq R < R_{k_0-1}$ and let $k_1 = \max \{k_0,0\}.$ Now applying $(-\Delta + V)^j$ for $1\leq j \leq \frac{N-6}{2}$ on both sides of \eqref{elimination} and integrating against $\Phi_k$ if $k \geq -1$ and $\Phi^k$ if $k \leq -1$ we obtain
\[
\vert c_{j,k+1}^\infty - c_{j,k}^\infty\vert \leq \norm{u_L}_{\tilde{Y}_R}R_k^{-2j}
\]
for all $k \geq -1,$ and after plugging these estimates back into \eqref{elimination} we obtain
\[
\vert c_{j,k+1}^\infty - c_{j,k}^\infty\vert \lesssim \norm{u_L}_{\tilde{Y}_R} R_k^{-1-2j}
\]
for all $k \leq -2.$ Then if we write $c_{j,k}^\infty = c_{j,k_1}^\infty+\bar{c}_{j,k}^\infty,$ $c_{j,k}^0 = c_{j,k_1}^0 + \bar{c}_{j,k}^0,$
\begin{equation}\label{eq108}
\bar{c}_{j,k}^\infty = 
\left\{
\begin{matrix}
O(kR_k^{-2j}\norm{u_L}_{\tilde{Y}_R}) \mbox{ if } k \geq 0,\\
O(R_k^{-2j-1} \norm{u_L}_{\tilde{Y}_R}) \mbox{ if }k_0\leq k \leq -1
\end{matrix}
\right.
\end{equation}

if $k_0 \leq 0,$ and
\begin{equation}\label{eq109}
\bar{c}_{j,k}^\infty = O(\vert k-k_0 \vert \norm{u_L}_{\tilde{Y}_R} R_k^{-2j})
\end{equation}
if $k\geq k_0.$ Similarly, we have
\begin{equation}\label{eq110}
\vert c_{j,k+1}^0 - c_{j,k}^0\vert \lesssim R_k^{-2j} \norm{u_L}_{\tilde{Y}_R}
\end{equation}
and so
\begin{equation}\label{eq111}
\vert \bar{c}_{j,k}^0\vert \lesssim \vert k\vert R_k^{-2j}\norm{u_L}_{\tilde{Y}_R}.
\end{equation}
Therefore
\begin{eqnarray*}
\p_tu_L(0) &=& \frac{1}{R_k}\tilde{u}_k(R_k) + \sum_{j=0}^{\frac{N-6}{2}} c_{j,k}^\infty T_j^\infty + \sum_{j=0}^{\frac{N-6}{2}} c_{j,k}^0 \tilde{T}_j^0\\
&&+ (-\Delta +V)\left(\int_0^{R_k} \left(1-\frac{t}{R_k}\right)\tilde{u}_k(t)dt\right)\\
&=&\sum_{j=0}^{\frac{N-6}{2}} c_{j,k_1}^\infty T_j^\infty + \left(\sum_{j=0}^{\frac{N-6}{2}} (\bar{c}_{j,k}^\infty T_j^\infty + c_{j,k}^0 \tilde{T}_j^0)+\frac{1}{R_k}\tilde{u}_k(R_k)\right)\\
&&+ (-\Delta + V)\left(\int_0^{R_k} \left(1-\frac{t}{R_k}\right)\tilde{u}_k(t)dt\right).
\end{eqnarray*}
Let $\eta$ be such that $\eta_k:= \eta\left(\frac{r}{R_k}\right)$ is a partition of unity, that is
\[
\sum_{k=-\infty}^\infty \eta\left(\frac{r}{R_k}\right)=1.
\]
Additionally, suppose $\mathrm{Supp} \eta \subseteq [1,3],$ and let
\[
f_k^1 := \left(\sum_{j=0}^{\frac{N-6}{2}} \bar{c}_{j,k}^\infty T_j^\infty + c_{j,k}^0 \tilde{T}_j^0\right) + \frac{1}{R_k} \tilde{u}_k(R_k).
\]
Then if $k \geq k_0,$ $R_k \leq r \leq 3R_k,$ we have
\begin{eqnarray*}
\p_tu_L(0) &=&\sum_{k \geq k_0} \eta_k \p_tu_L(0)\\
&=& \sum_{k \geq k_0}\eta_k \left(\sum_{j=0}^{\frac{N-6}{2}} \bar{c}_{j,k_1}^\infty T_j^\infty + \left(\sum_{j=0}^{\frac{N-6}{2}} (\bar{c}_{j,k}^\infty T_j^\infty +c_{j,k}^0\tilde{T}_j^0) + \frac{1}{R_k}\tilde{u}_k(R_k)\right)\right.\\
&& \quad \quad \quad \quad \quad\left.+ (-\Delta + V)\left(\int_0^{R_k} \left(1-\frac{t}{R_k}\right) \tilde{u}_k(t)dt\right)\right)\\
&=& \sum_{j=0}^{\frac{N-6}{2}} c_{j,k_1}^{\infty} T_j^\infty + \sum_{k \geq k_0} \eta_k \left(\left(\sum_{j=0}^{\frac{N-6}{2}} \bar{c}_{j,k}^\infty T_j^\infty + c_{j,k}^0 \tilde{T}_j^0\right) + \frac{1}{R_k} \tilde{u}_k(R_k)\right)\\
&+& \sum_{k \geq k_0} \eta_k (-\Delta +V)\left(\int_0^{R_k} \left(1-\frac{t}{R_k}\right)\tilde{u}_k(t) dt\right)\\
&=:& \sum_{j=0}^{\frac{N-6}{2}} c_{j,k_1}^\infty T_j^\infty + \sum_{k \geq k_0}\eta_k f_k^1 + \sum_{k \geq k_0}\eta_k(-\Delta+V)\left(\int_0^{R_k}\left(1-\frac{t}{R_k}\right)\tilde{u}_k(t)dt\right).
\end{eqnarray*}
Note that
\begin{eqnarray*}
&&(-\Delta+V) \left(\eta_k\left(\int_0^{R_k} \left(1-\frac{t}{R_k}\right)\tilde{u}_k(t)dt\right)\right) - \eta_k (-\Delta + V)\left(\int_0^{R_k}\left(1-\frac{t}{R_k}\right)\tilde{u}_k(t)dt\right)\\
&&=R_k^{-2} (2\eta_k''-\Delta \eta_k) \left(\frac{r}{R_k}\right) \int_0^{R_k}\left(1-\frac{t}{R_k}\right)\tilde{u}_k(t)dt+ \p_r\left(-2R_k^{-1}\eta_k'\left(\frac{r}{R_K}\right)\int_0^{R_k}\left(1-\frac{t}{R_k}\right) \tilde{u}_k(t) dt\right)\\
&& =: f_k^2 + \p_r \tilde{g}_k.
\end{eqnarray*}
This implies that
\begin{eqnarray*}
\p_tu_L(0) &=& \sum_{j=0}^{\frac{N-6}{2}} c_{j,k_1}^\infty T_j^\infty + \sum_{k \geq k_0} (\eta_k f_k^1 + f_k^2) + \p_r\left(\sum_{k \geq k_0} \tilde{g}_k\right)\\
&+& (-\Delta +V) \left(\sum_{k \geq k_0} \eta_k \left(\int_0^{R_k} \left(1-\frac{t}{R_k}\right)\tilde{u}_k(t)dt\right)\right).
\end{eqnarray*}
We let
\[
p_1 := (-\Delta+V) \sum_{k \geq k_0} \eta_k \left(\int_0^{R_k}\left(1-\frac{t}{R_k}\right)\tilde{u}_k(t) dt\right).
\]
Then we have the decomposition
\[
\p_tu_L(0) = p_1 + \sum_{j=1}^{\frac{N-6}{2}} c_{j,k_1}^0 T_j^0 + \tilde{u}_1
\]
where
\[
-\tilde{u}_1 = f+\p_r g,
\]
where
\begin{equation}\label{fgdecomp}
f=\sum_{k\geq k_0} \eta_k f_k^1+f_k^2, \; g=\sum_{k \geq k_0} \tilde{g}_k.
\end{equation}
Note that by Hardy's inequality (see e.g. \cite{cdkm}),
\[
\norm{v}_{Z_{-2,R}} \lesssim \norm{\p_r v}_{Z_{-3,R}}.
\]
Therefore
\begin{eqnarray*}
\norm{\mathcal{A}( \p_t u_L(0))}_{Z_{-2,R}} & \leq & \norm{\mathcal{A} p_1}_{Z_{-2,R}} + \sum_{j=1}^{\frac{N-6}{2}}c_{j,k_1}^0 \norm{\mathcal{A} T_j^0}_{Z_{-2,R}} + \norm{\mathcal{A} \tilde{u}_1}_{Z_{-2,R}}.
\end{eqnarray*}
We will now bound each term: using \eqref{eq108}, \eqref{eq109}, \eqref{eq110}, \eqref{eq111},
\begin{eqnarray*}
\norm{\mathcal{A}(\eta_kf_k^1)}_{Z_{-2,R}} \lesssim \norm{\p_r\mathcal{A}(\eta_kf_k^1)}_{Z_{-3,R}} = \norm{(r\eta_kf_k^1)}_{Z_{-3,R}}
\end{eqnarray*}
and
\begin{eqnarray*}
\norm{\mathcal{A}(r\eta_kf_k^1)}_{L^2(R_k\leq r\leq 3R_k)}&=& \norm{r\eta_k\left(\sum_{j=0}^{\frac{N-6}{2}} (\bar{c}_{j,k}^{\infty}T_j^\infty+c_{j,k}^{\infty} \tilde{T}_j^0) - \frac{1}{R_k}\tilde{u}_k(R_k)\right)}_{L^2(R_k\leq r\leq 3R_k)}\\
&\lesssim& \left\langle \log \frac{R_k}{\langle R\rangle}\right\rangle \norm{u_L}_{\tilde{Y}_R}.
\end{eqnarray*}

We also have
\begin{eqnarray*}
\norm{\mathcal{A}(f_k^2)}_{Z_{-2,R}} \lesssim \norm{\p_r\mathcal{A}(f_k^2)}_{Z_{-3,R}} = \norm{(rf_k^2)}_{Z_{-3,R}}
\end{eqnarray*}
and
\[
\norm{rf_k^2}_{L^2(R_k\leq r\leq 3R_k)} \lesssim R_k^{-1} \int_0^{R_k} \norm{ \p_t u_L(0)}_{L^2(R_k \leq r \leq 3R_k)} dt \lesssim \norm{u_L}_{\tilde{Y}_R}.
\]
By definition of the $Z_{-2,R}$ norm,
\[
\norm{f}_{Z_{-2,R_{k_0}}} \lesssim \norm{u_L}_{\tilde{Y}_R}.
\]
On the other hand, since
\[
\mathcal{A}(\p_r \tilde{g}_k) = -r\tilde{g}_k(r) - \int_r^\infty \tilde{g}_k(r)dr,
\]
we have
\[
\norm{\mathcal{A}(\p_r\tilde{g}_k)}_{L^2(R_k \leq r\leq 3R_k)} \lesssim \int_0^{R_k} \norm{\p_tu_L(0)}_{L^2(R_k \leq r \leq 3R_k)} dt \lesssim R_k\norm{u_L}_{\tilde{Y}_R}.
\]
This implies $\norm{g}_{\tilde{Z}_{-2,R_{k_0}}} \lesssim \norm{u_L}_{\tilde{Y}_R}.$ Now to bound $p_1,$ note that
\begin{eqnarray*}
\mathcal{A}p_1 &=& \int_{r}^\infty \rho p_1(\rho) d\rho\\
&=& \sum_{k \geq k_0}\int_r^{\infty} \rho (-\Delta+V)\left(\eta_k \left(\int_0^{R_k}\left(1-\frac{t}{R_k}\right)\tilde{u}_k(t) dt\right)\right) d\rho\\
&=& \sum_{k \geq k_0} \int_r^\infty \rho V(\rho)\eta_k \left(\int_0^{R_k}\left(1-\frac{t}{R_k}\right)\tilde{u}_k(t)\right)\\
&-& r \p_r\left(\eta_k \int_0^{R_k}\left(1-\frac{t}{R_k}\right)\tilde{u}_k(t)\right) + (N-2)\eta_k\int_0^{R_k} \left(1-\frac{t}{R_k}\right)\tilde{u}_k(t)dt\\
&=:& \sum_{k\geq k_0}B_1^k+B_2^k+B_3^k.
\end{eqnarray*}
Since $V(\rho) \lesssim \rho^{-4}$ for $\rho>r,$ we have
\[
\norm{\sum_{k\geq k_0}B_1^k}_{Z_{-2},R},\norm{\sum_{k\geq k_0} B_3^k}_{Z_{-2},R} \lesssim \norm{u_L}_{\tilde{Y}_R}.
\]
To bound $B_2^k,$ note that
\[
\norm{B_2^k}_{L^2(R_k\leq r \leq 3R_k)} \lesssim R_k^{-1} \int_{0}^{R_k} \norm{\p_\rho (\eta_k \tilde{u}_k(t)}_{L^2(R_k\leq r \leq 3R_k)} \lesssim \norm{u_L}_{\tilde{Y}_R}
\] and so $\norm{\mathcal{A}(\p_t u_L(0))}_{Z_{-2}} \lesssim \norm{u_L}_{\tilde{Y}}.$
\end{proof}
Now we improve the bound with the following lemma.
\begin{lemma}\label{0improv}
Suppose that $u_L$ solves \eqref{LW}. We have
\[
\norm{\Pi_{L^2}^{\perp} \p_t u_L(0)}_{Z_{-4}} \lesssim \norm{\mathcal{A}(\Pi_{L^2}^{\perp} \p_t u_L(0))}_{Z_{-2}} + \norm{u_L}_{\tilde{Y}}
\]
\end{lemma}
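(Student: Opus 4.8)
The plan is as follows. Set $h := \Pi_{L^2}^\perp \p_t u_L(0)$; by splitting $u_L$ into its even and odd parts in time (as at the start of the proof of Lemma \ref{56equiv}) we may assume $u_L$ is odd in time, so $\p_t u_L(0) = u_1 \in L^2(\R^N)$. Since $\mathcal{A}^{-1} = -\tfrac1r\p_r$ we have $h = -\tfrac1r\p_r(\mathcal{A}h)$, so passing from a $Z_{-2}$ bound on $\mathcal{A}h$ to a $Z_{-4}$ bound on $h$ is consistent with scaling — each of $\p_r$ and the factor $r^{-1}$ costs one power of $r$, i.e.\ two indices of $Z_\alpha$ — but cannot be obtained by differentiating inside the norms. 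Instead I would re-use the explicit decomposition of $\p_t u_L(0)$ built during the proof of Lemma \ref{56equiv}.

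Recall that on each dyadic annulus $\{R_k \le r \le 3R_k\}$ ($R_k = 2^k$, $k \ge k_0$) that proof produces
\[
\p_t u_L(0) = p_1 + \sum_{j=1}^{\frac{N-6}{2}} c_{j,k_1}^0 T_j^0 + \tilde{u}_1, \qquad -\tilde{u}_1 = f + \p_r g,
\]
with $f = \sum_{k\ge k_0}(\eta_k f_k^1 + f_k^2)$, $g = \sum_{k\ge k_0}\tilde{g}_k$, $p_1 = (-\Delta+V)\sum_{k\ge k_0}\eta_k\int_0^{R_k}(1-\tfrac{t}{R_k})\tilde{u}_k(t)\,dt$, and with the localized bounds obtained there (e.g.\ $\norm{r\eta_k f_k^1}_{L^2(R_k\le r\le 3R_k)} \lesssim \langle\log(R_k/\langle R\rangle)\rangle\norm{u_L}_{\tilde{Y}}$, $\norm{rf_k^2}_{L^2(R_k\le r\le 3R_k)} \lesssim \norm{u_L}_{\tilde{Y}}$, and a bound for $\mathcal{A}(\p_r\tilde{g}_k)$), as well as the explicit formula $\mathcal{A}p_1 = \sum_{k\ge k_0}(B_1^k + B_2^k + B_3^k)$; note that $c_{j,k_1}^0 = 0$ once $R \gtrsim 1$, so the $T_j^0$ appear only on annuli near the origin. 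Since $\Pi_{L^2}^\perp$ removes only a multiple of $\Lambda W = T_0^\infty$, on each annulus $h = p_1 + \sum_j c_{j,k_1}^0 T_j^0 + \tilde{u}_1 - c_\infty\Lambda W$, and I would bound the three groups of terms in $Z_{-4}$ as follows. First, for the error $\tilde{u}_1 = -(f+\p_r g)$: using the localized $L^2$ bounds above, the bound for $\mathcal{A}(\p_r\tilde g_k)$, and the elementary comparability $\norm{G}_{Z_{-4}} \simeq \norm{rG}_{Z_{-3}}$ (immediate from the definitions, since $r \simeq R$ on $\{R<|x|<2R\}$), one gets $\norm{\tilde{u}_1}_{Z_{-4}} \lesssim \norm{u_L}_{\tilde{Y}}$. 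Second, for $p_1 = -\tfrac1r\p_r(\mathcal{A}p_1)$: differentiating the explicit expressions $B_1^k, B_2^k, B_3^k$ and estimating, using only $\norm{\tilde{u}_k(t)}_{\dot{H}^1_{R_k}} \le \norm{u_L}_{\tilde{Y}}$, Hardy's inequality, and $V(\rho) \lesssim \rho^{-4}$, reproduces the $Z_{-2}$ estimate for $\mathcal{A}p_1$ one index lower and gives $\norm{p_1}_{Z_{-4}} \lesssim \norm{u_L}_{\tilde{Y}}$. Third, the remaining finite-dimensional combination $\sum_j c_{j,k_1}^0 T_j^0 - c_\infty\Lambda W$: by the previous two steps and the hypothesis, its $\mathcal{A}$-image equals $\mathcal{A}h - \mathcal{A}p_1 - \mathcal{A}\tilde{u}_1$ and is therefore bounded in $Z_{-2}$ by $\norm{\mathcal{A}h}_{Z_{-2}} + \norm{u_L}_{\tilde{Y}}$; using the linear independence of $\{T_j^0\}_{j\ge1}\cup\{\Lambda W\}$ and their explicit power-law asymptotics from Proposition \ref{varofpars}, together with the fact that $u_1 \in L^2(\R^N)$ constrains the behaviour near the origin, one recovers a bound on the coefficients and hence closes the $Z_{-4}$ estimate of this piece — this is the linear-algebra step, carried out as in \cite{cdkm}.

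Summing the three contributions gives the claim. The main obstacle is the term $p_1$: it is $(-\Delta+V)$ applied to a function assembled only from the $\dot{H}^1_{R_k}$-bounded quantities $\tilde{u}_k(t)$, so rather than estimating it directly one must trade the Laplacian for $\p_t^2$ using the equation — exactly as in the derivation of \eqref{ptdecomp2} — and integrate by parts in $t$ to land on controlled boundary and averaged terms; the delicate point is to keep track of the logarithmic losses that accumulate when summing over the dyadic scales $k$, which is precisely what forces the $\langle\log R\rangle$ weights appearing in the $Z_\alpha$ norms.
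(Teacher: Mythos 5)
Your plan — reuse the explicit decomposition $\p_t u_L(0) = p_1 + \sum_j c_{j,k_1}^0 T_j^0 + \tilde{u}_1$ from the proof of Lemma~\ref{56equiv} and estimate each piece directly in $Z_{-4}$ — is not what the paper does, and the step you yourself flag as ``the main obstacle'' is in fact a genuine gap, not merely a technicality to be filled in.

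The whole content of Lemma~\ref{0improv} is a gain of one spatial derivative: the hypothesis gives $Z_{-2}$ control of $\mathcal{A}h$, and since $\mathcal{A}^{-1} = -\tfrac1r\p_r$, a $Z_{-4}$ bound on $h$ is a derivative of that information, not a consequence of it. Your proposal tries to extract this derivative from the Lemma~\ref{56equiv} bounds, but those bounds were designed precisely to control $\mathcal{A}(\p_t u_L(0))$, one derivative up from what is needed. The concrete obstruction sits in $p_1 = (-\Delta+V)\sum_k\eta_k\int_0^{R_k}(1-t/R_k)\tilde{u}_k(t)\,dt$: this is a second spatial derivative of a quantity controlled only in $\dot{H}^1_{R_k}$, and the paper's $B_2^k$ term (which is $-r\p_r$ of the same expression) already saturates the available one-derivative control. ``Differentiating the explicit expressions $B_1^k,B_2^k,B_3^k$'' therefore produces a term with $\p_r^2\tilde{u}_k$, which the $\tilde{Y}$-norm does not see. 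Your proposed remedy — trade $-\Delta+V$ for $\p_t^2$ via the equation and integrate by parts in $t$ — does not close either: $\tilde{u}_k(t)=\Pi_{\dot{H}^1_{R_k}}^\perp u_L(t)$ does not itself solve \eqref{LW} (the finite-rank projection onto $\operatorname{Span}\{T_j^\infty\}$ does not commute with $-\Delta+V$), so the identity $(-\Delta+V)\tilde{u}_k=-\p_t^2\tilde{u}_k$ you would need is false; and even after conversion, the boundary and averaged terms involve $\p_t u_L$ in $L^2$, which is again not controlled by $\tilde{Y}$.

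The paper gains the missing derivative by a different mechanism. It conjugates by $\mathcal{A}$, using $\Delta_N=\mathcal{A}^{-1}\Delta_{N-2}\mathcal{A}$ to recast $v:=\mathcal{A}\p_t u_L$ as a solution of a radial wave equation in $N-2$ dimensions with potential $\mathcal{A}V\mathcal{A}^{-1}$, then decomposes $v$ on space-time dyadic annuli with an $L^2_{\bar\chi_k}$ orthogonality (not the $\dot{H}^1_{R_k}$ one used in Lemma~\ref{56equiv}), and applies the virial-type identity
\[
\int\!\!\int \vert\nabla w_k\vert^2\psi_k \,dt\,dx = \int\!\!\int(\p_t w_k)^2\psi_k\,dt\,dx - \int\!\!\int\Bigl(V+\tfrac12(\p_{tt}-\Delta)\Bigr)\psi_k\,w_k^2\,dt\,dx,
\]
valid because $w_k$ is $L^2_{\bar\chi_k}$-orthogonal to $\{T_j^\infty,\tilde{Y}_j^0\}$. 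The right-hand side involves only $(\p_t w_k)^2$ and $w_k^2$ — zero spatial derivatives — and is controlled by the $Z_{-2}$ bounds on $v(0)$ and on $g_k$; the left-hand side has $\vert\nabla w_k\vert^2$. This identity, combined with the mean value theorem in $t$ to find a good time slice $t_k$, finite propagation speed, and a cutoff energy estimate to transfer the bound back to $t=0$, is the step that converts one power of $r$ into one derivative. Nothing in your outline plays this role, and without it the $Z_{-4}$ estimate on $p_1$ (and hence on $h$) cannot be obtained from the Lemma~\ref{56equiv} material alone.
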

\begin{proof}
First we claim we can suppose without loss of generality that $\p_t u_L(0) = \Pi_{L^2}^{\perp} \p_t u_L(0).$ Indeed, it suffices to note that if
\[
\Pi_{L^2}^\perp \p_t u_L(0) = \p_tu_L(0) - \sum_{k=0}^{\frac{N-6}{2}} c_k T_k^\infty,
\]
then
\[
\norm{\Pi_{L^2}^\perp \left(\p_tu_L(0) - \sum_{k=0}^{\frac{N-6}{2}} c_k T_k^\infty\right)}_{Z_{-3}} \lesssim \norm{\p_tu_L(0) - \sum_{k=0}^{\frac{N-6}{2}} c_k T_k^\infty}_{Z_{-3}}
\]
by direct computation, using that by Lemma \ref{varofpars}
\[
\vert T_k^\infty(r)\vert + \left|r\frac{d}{dr}T_k^\infty(r)\right| \lesssim \langle r\rangle^{-N+2+2k}.
\]
Now suppose then that $\p_tu_L(0) = \Pi_{L^2}^\perp \p_tu_L(0),$ and let $v=\mathcal{A}\p_t u.$ We also define the operator
\[
\Delta_N := \p_{rr} + \frac{N-1}{r} \p_r 
\]
Noting that $\Delta_N = \mathcal{A}^{-1}\Delta_{N-2}\mathcal{A}$ we see that $v$ solves
\[
\left\{
\begin{matrix}
\p_t^2 v-\Delta_{N-2} v + \mathcal{A}V\mathcal{A}^{-1} v = 0,\\
\overrightarrow{v}(0) = (\mathcal{A}\p_t u(0),0).
\end{matrix}
\right.
\]
Note that the kernel of the operator $-\Delta_{N-2} + \mathcal{A}V\mathcal{A}^{-1}$ is $\mathrm{Span}\{\mathcal{A}T_0^\infty,\int_1^r \rho T_0^0(\rho)d\rho,1\}.$
Let $k$ be a fixed integer, and let $R_k=2^k,$ $\tau_k=\frac{R_k}{2}.$ Let $\tilde{u}_k(t)$ be the solution to \eqref{LW} with initial condition $\Pi_{\dot{H}^1_{R_k}} u_L(t).$ By the proof of Lemma \ref{56equiv}, we can decompose the solution $u_L$ as
\begin{equation}\label{decomp3}
u_L(t) = \sum_{j=0}^{\frac{N-6}{2}} \alpha_{j,k}^\infty T_j^\infty + \alpha_{j,k}^0 \chi_0 T_j^0 + \tilde{u}_k(t)
\end{equation}
for $\vert t\vert \leq R_k.$ Also,
\[
\p_t v = -\mathcal{A}(-\Delta_N+V) u(t) = -\int_r^\infty - \rho \p_{\rho\rho} u -(N-1) \p_\rho u +Vu d\rho = r\p_ru(r) +(N-2)u(t)-\mathcal{A}Vu(t).
\]
Letting $\bar{T}_{j-1}^0 :=(-\Delta_N +V)(\chi_0 T_j^0),$ we see that by applying $\mathcal{A}(-\Delta_N+V)$ to \eqref{decomp3}
\[
\p_t v = \sum_{j=0}^{\frac{N-6}{2}} \alpha_{j+1,k}^\infty \mathcal{A} T_j^\infty + \alpha_{j+1,k}^0 \mathcal{A} \bar{T}_j^0 - \mathcal{A}(-\Delta_N +V)\tilde{u}_k(t).
\]
Let $q := \norm{\mathcal{A}( \p_t u(0))}_{Z_{-2,R}} + \norm{u}_{\tilde{Y}_R}.$ Recall from Lemma \ref{56equiv} $R_k := 2^k,$ $\tau_k=\frac{R_k}{2},$ and let $k_0$ such that $R_{k_0-1} \leq 3R \leq R_{k_0}.$ We have the following identity for any radial function $w$ and smooth test function $\psi:$
\begin{eqnarray*}
&& \int_{\R^N} \int_\R \psi(\p_{tt}w - \Delta w + Vw)w dtdx\\
&&= \int_{\R^N} \int_\R (\vert \nabla w\vert^2 -(\p_t w)^2)\psi dtdx + \int_{\R^N} \int_\R (V+\frac{1}{2}(\p_{tt}-\Delta))\psi w^2 dtdx.
\end{eqnarray*}
Let $\bar{\chi}$ be a cutoff such that $\bar{\chi}(r)=1$ for $r \in [3,4]$ and $\bar{\chi}(r)=0$ for $r \leq 2$ or $r \geq 10,$ and as before let $\bar{\chi}_k(r) = \bar{\chi} \left(\frac{r}{R_k}\right),$ $\tilde{\chi}_k(r) = \tilde{\chi} \left(\frac{r}{R_k}\right),$ $\psi_k(t,r)=\tilde{\chi}_k(t)\bar{\chi}_k(r),$ where $\tilde{\chi}(t)=1$ if $\vert t\vert \leq \frac{1}{2},$ and $\tilde{\chi}(t) = 0$ if $\vert t\vert \geq 1.$ Let $k \geq k_0.$ Note that $\mathrm{supp}(\bar{\chi}_k) \subseteq \{r \geq 6R\},$ $\mathrm{supp} \psi_k \subseteq \{r \geq \vert t\vert + R\}.$ Let
\[
\langle u,v\rangle_{L_{\bar{\chi}_k}^2} = \int_{\R^N} uv \bar{\chi}_k, \; \norm{u}_{L_{\bar{\chi}_k}^2} = \langle u,u\rangle_{L_{\bar{\chi}_k}^2}.
\]
Let $w_k = w_k(t),$ $b_{j,k} = b_{j,k}(t),\tilde{b}_{j,k} = \tilde{b}_{j,k}(t)$ be uniquely defined so that
\[
u_L(t) = w_k(t) + \sum_{j=0}^{\frac{N-6}{2}} b_{j,k}T_j^\infty + \tilde{b}_{j,k} \tilde{Y}_j^0
\]
where $\tilde{Y}_j^0 = \chi_0 T_j^0,$
and
\[
w_k\perp_{L^2_{\chi_k}} \{T_j^\infty,\tilde{Y}_{j}^0\}_{j=0}^{\frac{N-6}{2}} \mbox{ for }k\leq -1,\; w_k\perp_{L^2_{\chi_k}} \{T_j^\infty\}_{j=0}^{\frac{N-6}{2}} \mbox{ for }k\geq 0.
\]
Then note that
\begin{eqnarray*}
\int_{\R^N} \int_\R \vert \nabla w_k\vert^2 \psi_k dtdx &=& \int_{\R^N} \int_\R (\p_t w_k)^2 \psi_k dtdx - \int_{\R^N} \int_\R \left(V+\frac{1}{2}(\p_{tt}-\Delta)\right)\psi_k w_k^2 dtdx\\
&+& \int_{\R^N} \int_\R \psi_k (\p_{tt}w_k - \Delta w_k + Vw_k) w_k dtdx.
\end{eqnarray*}
We have
\begin{eqnarray*}
&&\int_{\R^N} \int_\R \psi_k (\p_{tt}w_k - \Delta w_k +V w_k)w_k dtdx\\
&& -\int_{\R^N} \int_\R \psi_k\left(-\sum_{j=0}^{\frac{N-6}{2}} b_{j,k}(-\Delta + V)T_j^\infty + \tilde{b}_{j,k} (-\Delta + V)\tilde{Y}_j^0 - \sum_{j=0}^{\frac{N-6}{2}} (\p_{tt}b_{j,k}) T_j^\infty - (\p_{tt}\tilde{b}_{
j,k})\tilde{Y}_j^0\right) w_kdtdx\\
&&=0
\end{eqnarray*}
since $w_k \perp_{L_{\chi_k}^2} T_j^\infty,\tilde{Y}_j^0.$ Therefore
\begin{equation}\label{energyid10}
\int_{\R^N} \int_\R \vert \nabla w_k\vert^2 \psi_k dtdx = \int_{\R^N} \int_\R (\p_t w_k)^2 dtdx-\int_{\R^N} \int_\R (V+\frac{1}{2}(\p_{tt}-\Delta))\psi_k w_k^2dtdx.
\end{equation}
Let
\[
g_k = \p_t v - \p_t \left(\sum_{j=0}^{\frac{N-6}{2}} b_{j,k} T_j^\infty + \tilde{b}_{j,k} \tilde{Y}_j^0\right).
\]
Now, there exist coefficients $b_{j,k}, \tilde{b}_{j,k}, \beta_{j,k}^\infty, \beta_{j,k}^0$ such that
\begin{equation}\label{eq104}
v(0) = w_k(0)+\sum_{j=0}^{\frac{N-6}{2}} b_{j,k}(0)T_j^\infty + \tilde{b}_{j,k}(0) \tilde{Y}_j^0
\end{equation}
and
\begin{equation}\label{eq105}
g_k = \p_t w_k(t) + \sum_{j=0}^{\frac{N-6}{2}} \beta_{j,k}^\infty(0) T_j^\infty + \beta_{j,k}^0(0) T_j^0,
\end{equation}
where $\beta_{j,k}$ ar ethe unique coefficients such that the above identity holds, and where
\[
w_k(0) = \Pi_{L^2_{R_k}} v(0).
\]
Using the definition of $Z_{-2,R},$ we obtain
\begin{equation}\label{eq102}
\norm{v_0}_{L_{\bar{\chi}_k}^2}\lesssim \norm{v_0}_{L^2(2R_k \leq r \leq 10R_k)} \lesssim R_k \langle \log \frac{R_k}{\langle R\rangle}\rangle\norm{v_0}_{Z_{-2,R}} \leq R_k \langle \log \frac{R_k}{\langle R\rangle}\rangle q
\end{equation}
and similarly
\begin{equation}\label{eq103}
\norm{g_k}_{L_{\bar{\chi}_k}^2}\lesssim \norm{g_k}_{L^2(2R_k \leq r \leq 10R_k)} \lesssim \norm{g_k}_{L_{2R_k}^2} \leq \norm{\p_t v}_{Y_R} \leq q .
\end{equation}
Now by Pythagoras,
\[
\norm{ \sum_{j=0}^{\frac{N-6}{2}}b_{j,k} T_j^\infty + \tilde{b}_{j,k} \tilde{Y}_j^0}_{L_{\bar{\chi}_k}^2} \leq \norm{u_0}_{L_{\bar{\chi}_k}^2} \lesssim R_k \left\langle \log \frac{R_k}{\langle R\rangle}\right\rangle q,
\]
\[
 \norm{ \sum_{j=0}^{\frac{N-6}{2}}\beta_{j,k}^\infty T_j^\infty + \beta_{j,k}^0 \tilde{Y}_j^0}_{L_{\bar{\chi}_k}^2}\leq \norm{g_k(t)}_{L_{\bar{\chi}_k}^2} \lesssim q.
\]
Note that for general $\gamma_{j,k}^\infty, \gamma_{j,k}^0,$ we have
\[
\norm{ \sum_{j=0}^{\frac{N-6}{2}}\gamma_{j,k}^\infty T_j^\infty + \gamma_{j,k}^0 \tilde{Y}_j^0}_{L^2(2R_k \leq r \leq 10R_k)} \lesssim \norm{ \sum_{j=0}^{\frac{N-6}{2}}\gamma_{j,k}^\infty T_j^\infty +\gamma_{j,k}^0 \tilde{Y}_j^0}_{L_{\bar{\chi}_k}^2}.
\]
This implies in particular that
\begin{equation}\label{eq100}
\norm{ \sum_{j=0}^{\frac{N-6}{2}}b_{j,k}^\infty T_j^\infty + b_{j,k}^0 \tilde{Y}_j^0}_{L^2(2R_k \leq r \leq 10R_k)} \lesssim R_k \left\langle \log \frac{R_k}{\langle R\rangle}\right\rangle q,
\end{equation}
\begin{equation}\label{eq101}
\norm{ \sum_{j=0}^{\frac{N-6}{2}}\beta_{j,k}^\infty T_j^\infty + \beta_{j,k}^0 \tilde{Y}_j^0}_{L^2(2R_k \leq r\leq 10R_k)} \lesssim q.
\end{equation}
Putting \eqref{eq100}, \eqref{eq101}, \eqref{eq102}, \eqref{eq103} together with $\eqref{eq104}, \eqref{eq105}$ we obtain
\[
\norm{w_k(t)}_{L^2(2R_k \leq r \leq 10 R_k)} \lesssim R_k \left\langle \log \frac{R_k}{\langle R\rangle}\right\rangle q,
\]
\[
\norm{\p_t w_k(t)}_{L^2(2R_k \leq r \leq 10 R_k)} \lesssim q.
\]
Combining this with \eqref{energyid10} we obtain
\[
\int_{\R^N} \int_\R \vert \nabla w_k\vert^2 \psi_k dtdx \lesssim R_k \left\langle \log \frac{R_k}{\langle R\rangle}\right\rangle^2 q^2.
\]
By the mean value theorem, there is a $t_k$ with $\vert t_k \vert \leq \frac{R_k}{2}$ such that
\[
\int_{3R_k \leq r\leq 9 R_k} \vert \nabla w(t_k) \vert^2 dx \lesssim \left\langle \log \frac{R_k}{\langle R\rangle}\right\rangle^2 q^2.
\]
This bounds $\nabla w_k.$ To bound $\nabla v(0),$ by finite propagation speed we have for $\vert t\vert \leq t_k,$ $4R_k \leq r \leq 8 R_k,$ that
\[
v(t_k) = w_k(t_k)+ \sum_{j=0}^{\frac{N-6}{2}} h_{j,k} T_j^\infty + \tilde{h}_{j,k} T_j^0,
\]
\[
\p_t v(t_k) = g_k(t_k)+ \sum_{j=0}^{\frac{N-6}{2}} a_{j,k} T_j^\infty + \tilde{a}_{j,k} \tilde{Y}_j^0
\]
for appropriate coefficients $h_{j,k} , \tilde{h}_{j,k}, a_{j,k},\tilde{a}_{j,k}.$ Hence
\[
v(t) = \bar{w}_k(t) + \sum_{j=0}^{\frac{N-6}{2}} (h_{j,k} + (t-t_{j,k})a_{j,k})T_j^\infty + (\tilde{h}_{j,k} + (t-t_{j,k})\tilde{a}_{j,k})\tilde{Y}_j^0
\]
where $\bar{w}_k$ solves
\[
\left\{
\begin{matrix}
\p_{tt}\bar{w}_k - \Delta \bar{w}_k + V \bar{w}_k = 0,\\
(\bar{w}_k,\p_t \bar{w}_k)\vert_{t=t_k} = (w_k(t_k),g_k(t_k)).
\end{matrix}
\right.
\]
Letting
\[
d_{j,k} = h_{j,k}(t_k)-t_k a_{j,k}(t_k), \; \tilde{d}_{j,k} = \tilde{h}_{j,k}(t_k)-t_k \tilde{a}_{j,k}(t_k)
\]
we see that for all $4R_k \leq r\leq 8 R_k,$
\[
v_0 = \bar{w}_k(0)+\sum_{j=0}^{\frac{N-6}{2}} d_{j,k} T_j^\infty + \tilde{d}_{j,k} \tilde{Y}_j^0.
\]
Let $(\tilde{w}_{k,0},\tilde{w}_{k,1})$ be such that
\[
(\tilde{w}_{k,0},\tilde{w}_{k,1})(r) = (w_k,g_k)(t,r)
\]
for all $r \in [3R_k , 9R_k],$ and
\[
\norm{(\tilde{w}_{k,0},\tilde{w}_{k,1})}_{\dot{H}^1\times L^2} \lesssim \int_{3R_k\leq \vert x\vert \leq 9 R_k} \left(\vert \nabla v_k(t_k)\vert^2 + \frac{\vert v_k(t_k)\vert^2}{R_k^2} + \vert g_k(t_k)\vert^2\right) dx.
\]
Also let $\tilde{w}_k$ be the solution to
\[
\left\{
\begin{matrix}
\p_t^2 \tilde{w}_k -\Delta \tilde{w}_k + V\tilde{w}_k = 0,\\
(\tilde{w}_k(t_k),\p_t \tilde{w}_k(t_k)) = (\tilde{w}_{k,0},\tilde{w}_{k,1}).
\end{matrix}
\right.
\]
Then again by finite propagation speed $\bar{w}_k(0,r) = \tilde{w}_k(0,r)$ for all $4R_k \leq r \leq 8 R_k.$ After replacing $\tilde{w}_k$ by $\tilde{w}_k$ multiplied by a cutoff at $r=R_k+|t|$ and standard energy estimates, we obtain
\[
\norm{\tilde{w}_k(0)}_{\dot{H}^1_{R_k}} \lesssim \norm{(\tilde{w}_{k,0},\tilde{w}_{k,1})}_{\mathcal{H}}.
\]
Now applying the previous estimates, we obtain
\[
\norm{\tilde{w}_k(0)}_{\dot{H}^1_{R_k}} \lesssim \left\langle \log \frac{R_k}{\langle R\rangle}\right\rangle q
\]
and thus by Hardy, for all $4R_k \leq r \leq 8 R_k$ we have
\[
\norm{\nabla \bar{w}_k(0)}_{L^2(4R_k \leq r\leq 8 R_k)} + R_k^{-1} \norm{\bar{w}_k(0)}_{L^2(4R_k \leq r\leq 8 R_k)} \lesssim \left\langle \log \frac{R_k}{\langle R\rangle}\right\rangle q.
\]
By definition of the $Z_{-2}$ norm, we have
\[
\norm{\sum_{j=0}^{\frac{N-6}{2}} d_{j,k} T_j^\infty + \tilde{d}_{j,k} \tilde{Y}_j^0}_{L^2(4R_k \leq r\leq 8 R_k)} \lesssim R_k \left\langle \log \frac{R_k}{\langle R\rangle}\right\rangle q.
\]
Therefore
\[
\sum_{j=0}^{\frac{N-6}{2}} \norm{d_{j,k} T_j^\infty}_{L^2(4R_k \leq r\leq 8 R_k)} + \norm{\tilde{d}_{j,k} \tilde{Y}_j^0}_{L^2(4R_k \leq r\leq 8 R_k)} \lesssim R_k \left\langle \log \frac{R_k}{\langle R\rangle}\right\rangle q
\]
and
\[
\sum_{j=0}^{\frac{N-6}{2}} \norm{d_{j,k} T_j^\infty}_{\dot{H}^1(4R_k \leq r\leq 8 R_k)} + \norm{\tilde{d}_{j,k} \tilde{Y}_j^0}_{\dot{H}^1(4R_k \leq r\leq 8 R_k)} \lesssim q.
\]
These imply
\[
\norm{\nabla v_0}_{L^2(4R_k \leq r \leq 8R_k)} \lesssim \left\langle \log \frac{R_k}{\langle R\rangle}\right\rangle q
\]
which implies the result, since $4R_{k_0} \leq 24R.$
\end{proof}
Now Lemma \ref{otherpart} follows from Lemma \ref{0improv}, \ref{56equiv} and Lemma \ref{notinterestingsingle}.

\subsection{The $N\equiv 2 \mod 4$ case}\label{other2}
This part is similar to the $N \equiv 0 $ case. In this case we need an additional lemma about an elliptic estimate:
\begin{lemma}
There exists a constant $C>0$ such that the following holds. Let $R>0,$ and suppose $u \in Z_{-2,R}(\R^N)$ solves for $r>R$
\[
-\Delta u + Vu = f+\p_r g
\]
where $u,f,g$ are radial and satisfy
\[
\norm{f}_{Z_{-4,R}}, \norm{g}_{Z_{-3,R}} < \infty.
\]
Then if $R \geq 1,$ then
\[
d_{Z_{-2,R}}(u,\mathrm{span}\{T_0^\infty , \ldots , T_{\frac{N-6}{2}}^\infty\}) \leq C (\norm{f}_{Z_{-4,R}}+\norm{g}_{Z_{-3,R}}),
\]
while if $0 < R<1,$ we have
\[
d_{Z_{-2,R}}(u,\mathrm{span}\{T_0^\infty , \ldots , T_{\frac{N-6}{2}}^\infty, T_0^0,\ldots,T_{\frac{N-6}{2}}^0\}) \leq C (\norm{f}_{Z_{-4,R}}+\norm{g}_{Z_{-3,R}}).
\]
\end{lemma}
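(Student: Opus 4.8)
The approach is purely one–dimensional (radial): solve the static ODE $-\Delta u + Vu = h$ with $h := f + \p_r g$ by variation of parameters, then run a dyadic $Z$–norm analysis parallel to the proof of Lemma \ref{56equiv}. Recall that on radial functions $-\Delta + V$ has the fundamental system $\{T_0^\infty, T_0^0\} = \{\Lambda W, \Gamma\}$ with Wronskian $T_0^0\,\p_r T_0^\infty - T_0^\infty\,\p_r T_0^0 = r^{-(N-1)}$, so the operators $\mathcal{S}_0,\mathcal{S}_\infty$ from the proof of Proposition \ref{varofpars} are right inverses of $-\Delta+V$. Hence on $\{r>R\}$ we may write
\[
u = c_\infty T_0^\infty + c_0 T_0^0 + Ph,\qquad Ph(r) := -T_0^0(r)\int_r^\infty \rho^{N-1}T_0^\infty h\,d\rho + T_0^\infty(r)\int_{\max\{R,1\}}^r \rho^{N-1}T_0^0 h\,d\rho,
\]
the base point $\infty$ being forced by the integrability at infinity of $\rho^{N-1}T_0^\infty h$. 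In the part of $Ph$ coming from $\p_r g$ I would integrate by parts, moving the derivative onto $\rho^{N-1}T_0^{\bullet}(\rho)$ and using the pointwise bounds $|T_k^{\bullet}(r)| + |r\,\p_r T_k^{\bullet}(r)| \lesssim \langle r\rangle^{b_k^{\bullet}} + r^{a_k^{\bullet}}\mathbf 1_{r\le 1}$ from Proposition \ref{varofpars}; the boundary terms are of the form $r^{N-1}T_0^\infty T_0^0\, g = O(r^{-2})\cdot(\text{decay of }g)$, which is harmless.

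Next, for a fixed $\rho \ge R$ I would estimate $Ph(\rho)$ dyadically: splitting the two integrals over annuli $\{R_k \le |x| \le 2R_k\}$, $R_k = 2^k$, Cauchy–Schwarz together with the definitions of $Z_{-4,R}$ and $Z_{-3,R}$ bounds each annular contribution by $\big(\norm{f}_{Z_{-4,R}}+\norm{g}_{Z_{-3,R}}\big)\,R_k^{\sigma}\langle\log(R_k/\langle R\rangle)\rangle$ with $\sigma$ the natural power making the geometric sum converge at the correct end; multiplying by the prefactors $T_0^0(r)\sim 1$, resp. $T_0^\infty(r)\sim r^{-(N-2)}$, and summing produces exactly the profile $r^{-2}\langle\log(r/\langle R\rangle)\rangle$. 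Since the weight of $Z_{-2,R}$ is $\langle\log(\rho/\langle R\rangle)\rangle^{-1}$, this profile has $Z_{-2,R}$ norm bounded uniformly in $R>0$, which gives $\norm{(\text{main part of }Ph)}_{Z_{-2,R}} \lesssim \norm{f}_{Z_{-4,R}}+\norm{g}_{Z_{-3,R}}$.

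Finally I would isolate the resonant remainder. The annular sums fail to converge at the expected end precisely when a component of $f$ is, in the $Z_{-4,R}$ sense, aligned with one of the slowly decaying profiles $T_{k-1}^\infty$: by the relation $-\Delta T_k^\infty + V T_k^\infty = -T_{k-1}^\infty$ this feeds into $Ph$ a multiple of $T_k^\infty$ whose coefficient carries a logarithm (or, when $0<R<1$, a negative power of $R$), and this happens exactly for $1\le k\le \frac{N-6}{2}$ — the borderline profile $T_{(N-6)/2}^\infty \sim r^{-4}$ produces an honest $r^{-2}$ term with coefficient controlled by the right-hand side, so it need not be absorbed. When $0<R<1$ the same mechanism at the origin, together with the genuine homogeneous freedom along $T_0^0$, contributes multiples of $T_0^0,\dots,T_{(N-6)/2}^0$ (suitably cut off, as the $\chi^0 T_k^0$ in the definition of $\Pi_{L^2_R}^\perp$ above). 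Collecting all of these into the span we obtain $u = (\text{element of the span}) + (\text{main part})$ with the main part bounded in $Z_{-2,R}$ by $\norm{f}_{Z_{-4,R}}+\norm{g}_{Z_{-3,R}}$, which yields the asserted bound on $d_{Z_{-2,R}}$.

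The main obstacle is the bookkeeping in the last step: pinning down the exact linear combination of the $T_k^\infty$ (and, for $0<R<1$, of the $T_k^0$ near the origin) that must be subtracted so that the residual genuinely decays like $r^{-2}\langle\log(r/\langle R\rangle)\rangle$, and carrying this through with all implied constants uniform in $R$ over the whole range $R>0$ — the regime $0<R\ll 1$, where the origin singularities of $\Gamma$ and of the $T_k^0$ enter, being the delicate one.
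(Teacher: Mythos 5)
The paper does not actually supply a proof of this lemma: it states it and then remarks that ``the proof follows from the same argument used in the case $N\equiv 0 \mod 4$,'' a remark about the structure of Section~5.2 rather than a proof of the elliptic estimate itself. So there is no explicit argument in the paper to compare yours against, and the real question is whether your variation-of-parameters strategy closes.

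Your overall strategy is the natural and correct one, and the decay bookkeeping you sketch does work: after moving the derivative off $\p_r g$ by parts, the boundary terms $r^{N-1}T_0^\infty(r)T_0^0(r)g(r)$ are of size $r^{-2}\langle\log(r/\langle R\rangle)\rangle\,\norm{g}_{Z_{-3,R}}$, and the two Green's-function integrals both yield the $r^{-2}\langle\log\rangle$ profile after the dyadic Cauchy--Schwarz estimate.

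Where the sketch goes astray is the ``resonant remainder'' paragraph. The homogeneous freedom in the variation-of-parameters formula is exactly two-dimensional, $\mathrm{Span}\{T_0^\infty, T_0^0\}$; the functions $T_k^\infty, T_k^0$ with $k\ge 1$ are \emph{not} solutions of $(-\Delta+V)T=0$, so they cannot appear as ambiguities of the Green's function. The mechanism you invoke --- ``$f$ aligned with $T_{k-1}^\infty$ feeds a log-divergent multiple of $T_k^\infty$ into $Ph$'' --- does not in fact occur: for $1\le k\le\tfrac{N-6}{2}$, one has $T_{k-1}^\infty\sim r^{-N+2k}$, which decays strictly faster than the $r^{-4}\langle\log\rangle$ threshold of $Z_{-4,R}$, so there is no borderline resonance; and a direct check shows $\norm{\mathbf 1_{r>R}\,T_{k-1}^\infty}_{Z_{-4,R}}$ is comparable to $\norm{\mathbf 1_{r>R}\,T_k^\infty}_{Z_{-2,R}}$ uniformly in $R\ge 1$ (both scale like $R^{-N+4+2k}$), so those contributions are already controlled by $\norm{f}_{Z_{-4,R}}$. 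Carried out carefully, your argument actually proves the stronger bound $d_{Z_{-2,R}}\bigl(u,\mathrm{span}\{T_0^\infty\}\bigr)\le C(\norm{f}_{Z_{-4,R}}+\norm{g}_{Z_{-3,R}})$ for $R\ge 1$, and its analogue with $\mathrm{span}\{T_0^\infty,T_0^0\}$ for $0<R<1$, which implies the stated lemma a fortiori. The larger span in the lemma matches the projections $\Pi_{L^2_R}^\perp$, $\Pi_{\dot H^1_R}^\perp$ used in the surrounding argument, not a genuine obstruction in the elliptic estimate.

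The one step that genuinely needs to be written out --- which you flag but do not resolve --- is the regime $0<R\ll 1$. Near $r\sim R$, the term $-T_0^0(r)\int_r^\infty\rho^{N-1}T_0^\infty h\,d\rho$ grows like $r^{-(N-2)}$ since $T_0^0\sim c_1 r^{-(N-2)}$ there, which is not in $Z_{-2,R}$. One has to split $\int_r^\infty=\int_R^\infty-\int_R^r$, absorb the finite constant $\int_R^\infty\rho^{N-1}T_0^\infty h$ into the $T_0^0$ coefficient, and check that the remainder $T_0^0(r)\int_R^r\rho^{N-1}T_0^\infty h\,d\rho$ has the correct $r^{-2}\langle\log\rangle$ profile with a constant uniform as $R\to 0$; similarly, the jump from choosing the base point $\max(R,1)$ in the second integral must be absorbed into the $T_0^\infty$ coefficient. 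These are routine computations but they are precisely where uniformity in $R\in(0,1)$ is earned, so they belong in the final write-up.
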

The quantities $\norm{\Pi_{L^2}^{\perp} \p_t u(0)}_{Z_{-4}}$ are replaced by $d_{Z_{-2,R}}(u,\mathrm{span}\{T_0^\infty , \ldots , T_{\frac{N-6}{2}}^\infty\}).$ The proof of Lemma \ref{0improv} becomes slightly easier, since the averaging operator $\mathcal{A}$ is not needed to reduce to an $N-2$ dimensional radial wave equation. The proof follows  from the same argument used in the case $N \equiv 0\mod 4.$

\section{The multisoliton case}

In this section we prove Theorem \ref{mainmulti}.

First, we note that the problem  \eqref{multieq} is globally well posed. Indeed, local well posedness follows from the Strichartz estimate Lemma \ref{extstrichartz}., and global well posedness follows from the fact that \eqref{multieq} is linear. Now we have the following lemma.
\begin{lemma}\label{multistrich}
For all $J\in \N,$ there is a $C>0$ such that for all $\lambda \in \Lambda_J,$ if $u$ is a solution in $\R^{1+N}$ of
\[
\left\{
\begin{matrix}
\p_{tt} u-\Delta u + V_\lambda u = f,\\
(u,\p_t u)\vert_{t=0} = (u_0,u_1) \in \dot{H}^1\times L^2
\end{matrix}
\right.
\]
where $f \in L_t^1 L_x^2(\R\times \R^N),$ then
\[
\sup_{t \in \R} \norm{(u,\p_t u)}_{\mathcal{H}_{R+\vert t\vert}} + \norm{u}_{L_t^{\frac{2(N+1)}{N-2}} L_x^{\frac{2(N+1)}{N-2}}(r\geq R+\vert t\vert)} \lesssim \norm{(u_0,u_1)}_{\mathcal{H}_R} + \norm{f}_{L_t^1 L_x^2(r \geq R+\vert t\vert)}.
\]
\end{lemma}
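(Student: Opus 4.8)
The plan is to treat the potential term $V_\lambda u$ as a forcing term and run a perturbative argument off the free‑wave exterior Strichartz estimates; the only genuinely delicate point is that the bookkeeping must be uniform in $\lambda\in\Lambda_J$.

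\emph{Reduction and base estimate.} First I would use finite propagation speed for $\partial_{tt}-\Delta+V_\lambda$ (which is a speed‑one wave operator away from the origin, where $V_\lambda$ is singular) to replace $(u_0,u_1)$ by an extension of $(u_0,u_1)\mathbf 1_{\{r>R\}}$, so that only $\|(u_0,u_1)\|_{\mathcal H_R}$ enters. On the exterior cone $\mathcal C_{0,R}=\{r\ge R+|t|\}$ rewrite \eqref{multieq} as $\partial_{tt}u-\Delta u=f-V_\lambda u$ and apply the free‑wave exterior Strichartz estimate. Lemma \ref{extstrichartz} is stated only for the pair $(2,\tfrac{2N}{N-3})$, but its proof — extend the data to $\R^N$, extend the forcing by zero outside the cone, apply the global Strichartz estimates for every wave‑admissible pair, and invoke finite propagation speed — yields equally the energy norm $\sup_{t}\|(u,\partial_tu)(t)\|_{\mathcal H_{R+|t|}}$ and the diagonal Strichartz pair $(q_0,q_0)$ with $q_0=\tfrac{2(N+1)}{N-2}$. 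I would record this extended version, so that on any time interval $I$, with $t_*$ the endpoint of $I$ closest to $0$,
\[
\|u\|_{L^{q_0}_{t,x}(\mathcal C_{0,R}\cap(I\times\R^N))}+\sup_{t\in I}\|(u,\partial_t u)(t)\|_{\mathcal H_{R+|t|}}\lesssim \|(u(t_*),\partial_tu(t_*))\|_{\mathcal H_{R+|t_*|}}+\|f-V_\lambda u\|_{L^1_tL^2_x(\mathcal C_{0,R}\cap(I\times\R^N))}.
\]

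\emph{Perturbative step.} Set $a=\tfrac{2(N+1)}{3}$, so that $\tfrac1a+\tfrac1{q_0}=\tfrac12$, and $q_0'=\tfrac{2(N+1)}{N+4}\in(1,2)$. Hölder in $x$ then in $t$ gives
\[
\|V_\lambda u\|_{L^1_tL^2_x(\mathcal C_{0,R}\cap(I\times\R^N))}\le \mathcal N_I(\lambda,R)\,\|u\|_{L^{q_0}_{t,x}(\mathcal C_{0,R}\cap(I\times\R^N))},\qquad \mathcal N_I(\lambda,R):=\Big\|\,\|V_\lambda\|_{L^a_x(r\ge R+|t|)}\,\Big\|_{L^{q_0'}_t(I)}.
\]
If $\mathcal N_I\le\varepsilon_0$ for a small universal $\varepsilon_0$, a fixed‑point argument on $I$ — in the space $L^{q_0}_{t,x}(\mathcal C_{0,R}\cap(I\times\R^N))\cap C_t\mathcal H_{R+|t|}$, where the smallness makes $u\mapsto V_\lambda u$ a contraction — produces the exterior solution and the estimate on $I$ at once; by uniqueness and finite propagation speed this agrees with $u$. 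One then chains over consecutive intervals $I_1<\dots<I_K$ partitioning $\R$, using the energy at the right endpoint of $I_k$ as data for $I_{k+1}$, and sums with the $\ell^{q_0}$‑orthogonality of the Strichartz norm in time; the outcome is the claimed estimate with a constant of size $C^{K}$. Everything here is routine once $K$ is under control.

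\emph{The crux: $K$ depends only on $J$.} The exponents $a,q_0'$ are tuned precisely so that $\mathcal N_\R(\lambda,R)$ is invariant under the simultaneous dilation $(\lambda,R)\mapsto(\mu\lambda,\mu R)$ (one checks $\tfrac Na-2=-\tfrac1{q_0'}$). Moreover $V_\lambda=\sum_jV_{\{\lambda_j\}}$ and subadditivity plus rescaling of each summand give $\mathcal N_\R(\lambda,R)\le\sum_{j=1}^J\mathcal N^{(1)}_\R(R/\lambda_j)$, where $\mathcal N^{(1)}_\R(\rho)=\big\|\,\|V\|_{L^a_x(r\ge\rho+|t|)}\,\big\|_{L^{q_0'}_t(\R)}$. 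Since $\rho\mapsto\mathcal N^{(1)}_\R(\rho)$ is nonincreasing and $\mathcal N^{(1)}_\R(0)<\infty$ — here one uses $V\in L^a(\R^N)$, valid because $a>N/4$ and $V\sim|x|^{-4}$ at infinity, together with the fast decay of $\|V\|_{L^a_x(r\ge|t|)}$ as $|t|\to\infty$, whose $L^{q_0'}_t$‑integrability is a numerology check — we obtain $\mathcal N_\R(\lambda,R)\le JM_1$ for a dimensional constant $M_1$, uniformly in $\lambda$ and $R$. Finally $t\mapsto\|V_\lambda\|_{L^a_x(r\ge R+|t|)}$ is continuous and vanishes at $\pm\infty$, so from $\sum_k\mathcal N_{I_k}(\lambda,R)^{q_0'}=\mathcal N_\R(\lambda,R)^{q_0'}\le(JM_1)^{q_0'}$ one can split $\R$ greedily into at most $K(J):=(JM_1/\varepsilon_0)^{q_0'}+1$ intervals on each of which $\mathcal N_{I_k}\le\varepsilon_0$. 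The final constant is then $C^{K(J)}$, depending only on $J$. This uniform bound on the number of ``bad'' time intervals is the main obstacle, and it is exactly the place where the specific decay rate $V\sim|x|^{-4}$ of the soliton potential is needed; the rest of the argument is standard perturbative Strichartz bookkeeping.
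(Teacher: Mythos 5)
Your proof is correct. The paper itself does not reproduce the argument — it simply cites Lemma~2.8 of \cite{solres} — so a line‑by‑line comparison is not possible, but the structure you give (treat $V_\lambda u$ as a forcing term, run exterior Strichartz for the free wave on the admissible diagonal pair $(q_0,q_0)$ with $q_0=\tfrac{2(N+1)}{N-2}$, and absorb the potential term by Hölder against the scale‑invariant norm $\bigl\| \,\|V_\lambda\|_{L^{2(N+1)/3}_x(r\ge R+|t|)}\,\bigr\|_{L^{2(N+1)/(N+4)}_t}$, then chain over a $J$‑dependent number of good time intervals) is the standard one that such citations encode, and every step of the numerology checks out: the exponents are tuned so that $\tfrac Na-2=-\tfrac1{q_0'}$ gives exact invariance under $(\lambda,R)\mapsto(\mu\lambda,\mu R)$, $V\sim|x|^{-4}$ is in $L^{2(N+1)/3}$ near $0$ since $\tfrac{2(N+1)}{3}>\tfrac N4$, and the tail decays as $|t|^{N/a-4}$ with $\bigl|\tfrac Na-4\bigr|q_0'=\tfrac{5N+8}{N+4}>1$, so $\mathcal N^{(1)}_\R(0)<\infty$ and the subadditivity bound $\mathcal N_\R(\lambda,R)\le JM_1$ follows. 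Two trivial remarks: $V_\lambda$ is not actually singular at the origin (it is bounded there, $V(0)=-\tfrac{N+2}{N-2}$), though this does not affect the finite‑propagation‑speed reduction; and the greedy splitting should really allow $K(J)\le(JM_1/\varepsilon_0)^{q_0'}+2$ intervals rather than $+1$, which of course still depends only on $J$.
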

\begin{proof}
The proof is as in the proof of Lemma 2.8 in \cite{solres}.
\end{proof}
Next we claim the following inequalities.
\begin{claim}
Suppose that $f \in \dot{H}^1$ is radial. Then $\nabla f \in Z_{-3,\lambda}$ with
\begin{equation}
\norm{\nabla f}_{Z_{-3,\lambda}} \leq \norm{f}_{\dot{H}^1}
\end{equation}
and
\begin{equation}
\vert f(r)\vert \lesssim \frac{1}{r^2} \left(1+ \left\vert \min_{1\leq j \leq J} \log \left(\frac{r}{\lambda_j}\right)\right\vert\right) \norm{\nabla f}_{Z_{-3,\lambda}}.
\end{equation}
Moreover, if $\chi^0,\chi^1$ are two smooth cutoffs with $\chi^0(r)=1$ if $r \leq 1,$ $\chi^0(r)=0$ for $r \geq 2,$ $\chi^1$ with compact support outside the origin, and $\chi_R^i(r):= \chi^i\left(\frac{r}{R}\right),$ then
\begin{equation}\label{H1cutoff}
\norm{\chi_R^1 f}_{\dot{H}^1} \leq C \left(1+ \left\vert \min_{1\leq j \leq J} \log \left( \frac{R}{\lambda_j}\right)\right\vert\right) \norm{\nabla f}_{Z_{-3,\lambda}}
\end{equation}
and
\begin{equation}
\norm{\nabla(\chi_R^0 f)}_{Z_{-3,\lambda}} \leq C \norm{\nabla f}_{Z_{-3,\lambda}},
\end{equation}
\begin{equation}
\norm{\nabla((1-\chi_R^0) f)}_{Z_{-3,\lambda}} \leq C \norm{1_{\vert x\vert \geq R}\nabla f}_{Z_{-3,\lambda}},
\end{equation}
where $C$ is independent of $R.$
\end{claim}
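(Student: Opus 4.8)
The plan is to prove the six inequalities one at a time, all of them reducing to elementary dyadic estimates once one unwinds the definition of the $Z_{-3,\lambda}$ norm. Recall that for a radial function $h$,
\[
\norm{h}_{Z_{-3,\lambda}} = \sup_{R>0} \frac{R^{-3-(-3)}\cdots}{\inf_{1\le j\le J}\langle R/\lambda_j\rangle}\Bigl(\int_{R\le |x|\le 2R} h^2\,dx\Bigr)^{1/2};
\]
wait, more precisely with $\alpha=-3$ the prefactor is $R^{-N/2}/\inf_j\langle R/\lambda_j\rangle$ after translating to the $Z_{-3-(N/2-3)}$ normalization used here, so the content of the norm is: on each dyadic annulus $A_R=\{R\le |x|\le 2R\}$, $\|h\|_{L^2(A_R)}\lesssim R^{N/2}\,\bigl(\inf_j\langle R/\lambda_j\rangle\bigr)\,\|h\|_{Z_{-3,\lambda}}$. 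I would state this reformulation once at the start and use it throughout.

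For the first inequality $\norm{\nabla f}_{Z_{-3,\lambda}}\le \norm{f}_{\dot H^1}$, on each annulus $A_R$ one has $\int_{A_R}|\nabla f|^2\,dx \le \|f\|_{\dot H^1}^2$, so dividing by $R^N (\inf_j\langle R/\lambda_j\rangle)^2 \ge R^N$ (since $\inf_j\langle R/\lambda_j\rangle\ge 1$) and taking sup over $R$ gives the bound; actually one needs the normalization to match, so this is just $\int_{A_R}|\nabla f|^2 \le \|f\|_{\dot H^1}^2$ combined with the fact that $R^{-N}\cdot(\text{something}\ge 1)^{-2}$ makes the supremum finite — this is the routine direction. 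For the pointwise bound (2), I would use the fundamental theorem of calculus: for a radial $\dot H^1$ function $f(r) = -\int_r^\infty \partial_\rho f(\rho)\,d\rho$, and then split the integral into dyadic pieces $\rho\sim 2^k R$, applying Cauchy–Schwarz on each piece and the reformulated $Z_{-3,\lambda}$ bound for $\partial_\rho f$. The sum over dyadic scales $k$ of $\langle 2^k R/\lambda_{j(k)}\rangle$-type weights against the geometric decay $2^{-k}$ from Cauchy–Schwarz produces exactly the factor $1 + |\min_j \log(r/\lambda_j)|$: scales far above all the $\lambda_j$ or far below all of them contribute a logarithmically growing count of terms, which is where the $\min_j\log(r/\lambda_j)$ comes from, while scales comparable to some $\lambda_j$ contribute $O(1)$. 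I expect this dyadic bookkeeping — keeping track of which $\lambda_j$ realizes the infimum at each scale, and checking the sum telescopes correctly — to be the main obstacle, though it is more tedious than deep.

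For (3)–(6) the cutoff estimates follow from (1), (2) and the product rule. For $\norm{\chi^1_R f}_{\dot H^1}$: write $\nabla(\chi^1_R f) = \chi^1_R \nabla f + (\nabla\chi^1_R) f$; the first term is controlled in $L^2$ by $\norm{\nabla f}_{Z_{-3,\lambda}}$ times a logarithmic factor (integrate the $Z_{-3,\lambda}$ estimate over the finitely many dyadic annuli meeting $\mathrm{supp}\,\chi^1_R$, which is where the $1+|\min_j\log(R/\lambda_j)|$ enters via the worst annulus), and the second term, supported on $|x|\sim R$, is bounded using the pointwise estimate (2) evaluated at $r\sim R$ together with $|\nabla\chi^1_R|\lesssim R^{-1}$ and $\int_{|x|\sim R}R^{-2}\cdot R^{-4}(1+|\log|^2)\cdot R^N \lesssim R^{N-6}(\cdots)$ — but since $\dot H^1$ scaling is $N-2$, one checks the powers cancel appropriately; again this is routine. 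For (4) and (6), $\nabla(\chi^0_R f) = \chi^0_R\nabla f + (\nabla\chi^0_R)f$ and $\nabla((1-\chi^0_R)f)$ similarly; on each dyadic annulus the $\chi$ factors are bounded, the only new contribution is the annulus $|x|\sim R$ where $\nabla\chi^0_R$ lives, and there one again invokes (2) to bound $(\nabla\chi^0_R)f$ in $Z_{-3,\lambda}$ by $\norm{\nabla f}_{Z_{-3,\lambda}}$ with a constant independent of $R$ — the logarithmic factor from (2) is absorbed because on that single annulus $r\sim R$ and the $Z_{-3,\lambda}$ normalization already contains $\inf_j\langle R/\lambda_j\rangle$, which dominates $1+|\min_j\log(R/\lambda_j)|$ up to a constant. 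I would organize the whole proof as: (i) reformulate the norm dyadically; (ii) prove (2) by FTC plus dyadic summation; (iii) deduce (1) directly and (3)–(6) from (1)–(2) and Leibniz, handling the cutoff-derivative terms with (2).
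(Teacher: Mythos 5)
The paper does not actually prove this Claim; it is dispatched with the sentence ``The proof of this claim is the same as that of Claim 4.2 in \cite{cdkm}.'' Your overall strategy---translate the norm into dyadic annulus bounds, obtain the pointwise estimate by the fundamental theorem of calculus and Cauchy--Schwarz over dyadic blocks, and handle the cutoff estimates via Leibniz with the error term controlled by the pointwise bound---is the correct and standard one, and is surely what the cited proof does.

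That said, two concrete steps in your sketch would fail as written. First, your ``corrected'' reformulation of the prefactor to $R^{-N/2}/\inf_j\langle R/\lambda_j\rangle$ is wrong: with the paper's $R^{-3-\alpha}$ and $\alpha=-3$, the prefactor is $R^{0}=1$, and then inequality~(1) holds trivially since $\inf_j\langle R/\lambda_j\rangle\ge 1$. With your $R^{-N/2}$, the supremum $\sup_R R^{-N}\int_{A_R}|\nabla f|^2/(\inf_j\langle R/\lambda_j\rangle)^2$ is \emph{not} finite as $R\to 0$ for generic $f\in\dot H^1$ (take $\partial_r f\sim r^{-N/2+1+\varepsilon}$ near the origin, so $\|\nabla f\|_{L^2(A_R)}\sim R^{1+\varepsilon}$), and your remark that ``$R^{-N}\cdot(\text{something}\ge 1)^{-2}$ makes the supremum finite'' is simply false since $R^{-N}\to\infty$. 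Second, for~(4) and~(5) you invoke the domination $\inf_j\langle R/\lambda_j\rangle \gtrsim 1+|\min_j\log(R/\lambda_j)|$; this is false when $R$ is much smaller than all the $\lambda_j$, since then $\inf_j\langle R/\lambda_j\rangle\to 1$ while the right side diverges. That domination is correct only for a \emph{logarithmic} weight $\inf_j\langle\log(R/\lambda_j)\rangle$ in the denominator of the norm---which is almost certainly the intended definition (compare the $\langle\log R\rangle$ in $Z_\alpha$, and the quantity $\inf_j\langle\log(\lambda_{j_0}/\lambda_j)\rangle$ that appears later in the proof of Proposition \ref{p67}); the printed $\langle R/\lambda_j\rangle$ appears to be missing a $\log$. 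Relatedly, your heuristic that the $\log$ factor in~(2) arises from a ``logarithmically growing count of terms'' in the dyadic sum is not how the argument works: the sum $\sum_{k\ge 0}(2^k r)^{-(N-2)/2}W(2^kr)$ converges geometrically thanks to $N\ge 8$, and the $\log$ factor comes directly from the weight $W(2^kr)\sim 1+|\log(r/\lambda_{j^*})|+k$; summing gives $r^{-(N-2)/2}(1+|\min_j\log(r/\lambda_j)|)$ (not $r^{-2}$---that exponent is the $N=6$ value from CDKM and would not close the Leibniz computation for~(3) as $R\to\infty$). Fixing the prefactor to $R^0$, using the logarithmic weight, and replacing $r^{-2}$ by $r^{-(N-2)/2}$ in~(2), your outline does go through.
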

The proof of this claim is the same as that of Claim 4.2 in \cite{cdkm}.
For the moment we will consider a single soliton. We have the following lemma.
\begin{lemma}\label{L43}
There exists $C>0$ such that the following holds. For any $\lambda>0$ and any $u_0\in \dot{H}^1,$ the solution $u$ to
\[
\left\{
\begin{matrix}
\p_t^2 u -\Delta u + V_{\{\lambda\}} u =0\\
(u(0),\p_tu(0)) = (u_0,0)
\end{matrix}
\right.
\]
satisfies
\begin{equation}
\norm{V_{\{\lambda\}} u}_{L^1L^2(r>\vert t\vert)} \leq C\norm{\nabla u_0}_{Z_{-3}}.
\end{equation}
If $u_0(r)=0$ for all $r \geq R$ for some $R \leq 1,$ then
\[
\norm{V_{\{\lambda\}}u}_{L^1L^2(r \geq \vert t\vert)} \leq CR^{\frac{3}{4N}}\norm{\nabla u_0}_{Z_{-3}}.
\]
If $u_0(0)$ for all $r \leq R$ for some $R \geq 1,$ then
\[
\norm{V_{\{\lambda\}}u}_{L^1L^2(r > \vert t\vert)} \leq CR^{-1} \norm{\nabla u_0}_{Z_{-3}}.
\]
Similarly, if $u_1\in L^2$ and $u$ is the solution to
\[
\left\{
\begin{matrix}
\p_{tt} u-\Delta u + V_\lambda u = f,\\
(u,\p_t u)\vert_{t=0} = (0,u_1) \in \dot{H}^1\times L^2
\end{matrix}
\right.
\]
satisfies
\begin{equation}
\norm{V_{\{\lambda\}} u}_{L^1L^2(r>\vert t\vert)} \leq C\norm{u_1}_{Z_{-3}}.
\end{equation}
If $u_1(r)=0$ for all $r \geq R$ for some $R \leq 1,$ then
\[
\norm{V_{\{\lambda\}}u}_{L^1L^2(r \geq \vert t\vert)} \leq CR^{\frac{3}{4N}}\norm{u_1}_{Z_{-3}}.
\]
If $u_1(0)=0$ for all $r \leq R$ for some $R \geq 1,$ then
\[
\norm{V_{\{\lambda\}}u}_{L^1L^2(r > \vert t\vert)} \leq CR^{-1} \norm{u_1}_{Z_{-3}}.
\]
\end{lemma}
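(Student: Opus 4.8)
The plan is to strip the potential off the equation by a Duhamel/dyadic argument, reducing to the corresponding estimate for the free radial wave, and then to prove that free-wave estimate by a dyadic decomposition in space adapted to the $|x|^{-4}$-decay of $V$. The organisation parallels the analogous estimate in \cite{cdkm}; the one genuinely new point in the range $N\ge 8$ is the summability of the resulting dyadic sum, which forces a sharper bound on the free wave near the potential.

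First I would record the elementary pointwise bounds on the potential: since $W(x)\sim|x|^{-(N-2)}$ and $V=-\tfrac{N+2}{N-2}W^{4/(N-2)}$, the potential $V$ is bounded near the origin and satisfies $V(x)\sim|x|^{-4}$ at infinity, so $|V_{\{\lambda\}}(x)|\lesssim\lambda^{2}(\lambda^{2}+|x|^{2})^{-2}$. Restricted to the exterior cone this yields $\|V_{\{\lambda\}}\mathbf 1_{\{r>|t|\}}\|_{L^{2}_{t}L^{2N/3}_{x}}<\infty$ by a direct computation, together with the tail decay $\|V_{\{\lambda\}}\mathbf 1_{\{|x|>A\lambda\}}\mathbf 1_{\{r>|t|\}}\|_{L^{2}_{t}L^{2N/3}_{x}}\lesssim A^{-2}$. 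These are the quantities that will be paired against the inhomogeneous exterior Strichartz estimate of Lemma~\ref{extstrichartz}.

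Next I would write $u=u_{F}+v$, with $u_{F}$ the free wave with the same data $(u_{0},0)$ and $v=-\int_{0}^{t}\frac{\sin((t-s)\sqrt{-\Delta})}{\sqrt{-\Delta}}V_{\{\lambda\}}u(s)\,ds$ carrying zero data, so that Lemma~\ref{extstrichartz} gives $\|v\|_{L^{2}_{t}L^{2N/(N-3)}_{x}(r>|t|)}\lesssim\|V_{\{\lambda\}}u\|_{L^{1}_{t}L^{2}_{x}(r>|t|)}$. Splitting $V_{\{\lambda\}}$ at radius $A\lambda$, pairing the far part against the Strichartz norm of $v$ (Hölder in $(t,x)$, using the $A^{-2}$ gain) and the near part against a local-energy bound for $v$ on the compact-in-time set $\{r>|t|\}\cap\{|x|\lesssim A\lambda\}$, one runs the absorption argument of \cite{cdkm}. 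This reduces the lemma to the free-wave estimate
\[
\|V_{\{\lambda\}}u_{F}\|_{L^{1}_{t}L^{2}_{x}(r>|t|)}\lesssim\|\nabla u_{0}\|_{Z_{-3}},
\]
and the two conditional statements to the same bound with an extra factor $R^{3/(4N)}$ when $\operatorname{supp} u_{0}\subseteq\{r\le R\}$ with $R\le 1$ (the gain coming from Hölder on a ball of radius $\sim R$), and with an extra factor $R^{-1}$ when $\operatorname{supp} u_{0}\subseteq\{r\ge R\}$ with $R\ge 1$ (the support of $u_{F}(t)$ meeting that of $V_{\{\lambda\}}$ only for $|t|\gtrsim R$). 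The three statements for data $(0,u_{1})$ are identical after replacing $\|\nabla u_{0}\|_{Z_{-3}}$ by $\|u_{1}\|_{Z_{-3}}$ and $u_{F}=\cos(t\sqrt{-\Delta})u_{0}$ by $u_{F}=\sin(t\sqrt{-\Delta})(\sqrt{-\Delta})^{-1}u_{1}$.

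For the free-wave estimate I would decompose dyadically in $|x|$,
\[
\|V_{\{\lambda\}}u_{F}(t)\|_{L^{2}_{x}(r>|t|)}\lesssim\lambda^{-2}\sum_{2^{j}\gtrsim|t|}\big\langle 2^{j}/\lambda\big\rangle^{-4}\,\|u_{F}(t)\|_{L^{2}(|x|\sim 2^{j})},
\]
and control each annular piece on the exterior cone. At $t=0$, the pointwise bound $|g(r)|\lesssim r^{-2}\langle\log(r/\lambda)\rangle\|\nabla g\|_{Z_{-3}}$ of the Claim above gives $\|u_{F}(0)\|_{L^{2}(|x|\sim 2^{j})}\lesssim 2^{j(N/2-2)}\langle\log(2^{j}/\lambda)\rangle\|\nabla u_{0}\|_{Z_{-3}}$; for $t\ne 0$ one propagates this off the initial slice using finite propagation speed, the even-dimensional exterior energy estimates of \cite{ogchannels,radfields}, and Hardy's inequality (to pass between $u_{F}$ and $\nabla u_{F}$). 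The step I expect to be the main obstacle is that this pointwise bound alone makes the combined sum over $j$ and integral over $t$ converge only in low dimensions --- for $2^{j}\gg\lambda$ the general term is of size $\sim\lambda^{2}2^{j(N/2-6)}$ up to logarithms, hence summable only for $N<12$. For general even $N\ge 8$ one must therefore use a sharper estimate in the regime $r\gg|t|$: the free wave issued from $Z_{-3}$-gradient data is, near the potential, much smaller than the associated static profile, since the incoming wave is a thin shell whose $L^{2}$ mass on annuli $2^{j}\gg|t|$ carries an extra power of $|t|/2^{j}$. Establishing this refined even-dimensional exterior-energy bound, and with it the convergence of the $j$- and $t$-sums for every even $N\ge 8$, is the crux of the argument.
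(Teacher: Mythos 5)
Your proposal does not reach the result, and the gap you yourself flag at the end is genuine: you never establish the "sharper estimate in the regime $r\gg|t|$" that your reduction requires. Worse, the reason your sum over $j$ and $t$ fails to converge for $N\ge 12$ is not a technical deficiency to be patched by a refined exterior-energy lemma --- it is a symptom of having organised the argument the wrong way. You decompose the solution $u_F(t)$ into annuli at each fixed time, estimate each piece by the static pointwise bound $|g(r)|\lesssim r^{-2}\langle\log(r/\lambda)\rangle\|\nabla g\|_{Z_{-3}}$, and then try to sum; that sum genuinely does not converge for large $N$ because the static $Z_{-3}$ profile is not small near the potential. Also, your preliminary Duhamel/absorption step to strip the potential and reduce to the free wave is both unestablished (the local-energy bound on $\{r>|t|\}\cap\{|x|\lesssim A\lambda\}$ and the closing of the absorption loop are asserted, not proven) and unnecessary.

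The paper proceeds differently, and the difference is precisely what makes the sum converge for all $N$. Instead of decomposing the solution at each time, one decomposes the \emph{initial data} dyadically: $u_{\ell,0}=\chi_{2^\ell}u_0$, so $\|u_{\ell,0}\|_{\dot H^1}\lesssim(1+|\ell|)\|\nabla u_0\|_{Z_{-3}}$ by \eqref{H1cutoff}, and $u=\sum_\ell u_\ell$ where each $u_\ell$ solves the \emph{full} linearized equation with potential. One then applies the inhomogeneous exterior Strichartz estimate for the equation with potential (Lemma~\ref{multistrich}) directly to each $u_\ell$ --- no stripping of $V$ --- together with H\"older, obtaining $\|V_{\{\lambda\}}u_\ell\|_{L^1L^2(r\ge|t|)}\lesssim(1+|\ell|)\|V\|_{L^2_tL^{2N/3}_x(S_\ell)}\|\nabla u_0\|_{Z_{-3}}$. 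The key point is finite propagation speed: since $u_{\ell,0}$ is supported in $[2^{\ell-1},2^{\ell+1}]$, the solution $u_\ell$ lives in the spacetime shell $\{\max(0,2^{\ell-1}-|t|)\le r\le 2^{\ell+1}+|t|\}$, and on the intersection $S_\ell$ of that shell with the exterior cone $\{r\ge|t|\}$ the potential $V$ has small $L^2_tL^{2N/3}_x$ norm: $\lesssim 2^{3\ell/(2N)}$ for $\ell\le 0$ (the wave shell and the cone barely overlap near the origin) and $\lesssim 2^{-2\ell}$ for $\ell\ge 0$ (the $|x|^{-4}$ decay of $V$). The resulting geometric series beats the logarithmic loss $(1+|\ell|)$ uniformly in $N$. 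The conditional statements follow immediately by restricting the sum in $\ell$. If you want to repair your write-up, replace the time-slice decomposition of the solution with this data decomposition and drop the Duhamel reduction.
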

\begin{proof}
As in the proof of Lemma 4.3 in \cite{cdkm}, we use a dyadic partition. Let $\chi$ be such that $\mathrm{Supp} \chi \subset \left[\frac{1}{2},2\right]$ and $1 = \sum_{\ell\in \Z} \chi_{2^\ell}(r).$ We let $u_L$ be the solution to
\[
\left\{
\begin{matrix}
\p_t^2 u_\ell - \Delta u_{\ell} + V_{(\ell)} u_\ell = 0\\
(u_\ell(0),\p_tu_\ell(0)) = (u_{\ell,0},0)
\end{matrix}
\right.
\]
where $u_{\ell,0} = \chi_{2^\ell} u_0.$ Note that $u = \sum_{\ell \in\Z} u_\ell.$ Now by \eqref{H1cutoff}, we have
\[
\norm{u_{\ell,0}}_{\dot{H}^1} \lesssim (1+\vert \ell\vert) \norm{\nabla u_0}_{Z_{-3}}.
\]
Since $\mathrm{Supp} u_{0,\ell} \subset \left[ 2^{\ell-1},2^{\ell+1}\right]$ we have that
\[
\mathrm{Supp} (u_{0,\ell}) \subseteq \{\max(0, 2^{\ell-1}-\vert t\vert) \leq r \leq 2^{\ell+1} + \vert t\vert\}.
\]
Now by H\"{o}lder and Lemma \ref{multistrich} we obtain
\[
\norm{V_{\{\lambda\}}u_L}_{L^1L^2(r \geq \vert t\vert)} \lesssim (1+\vert \ell\vert) \norm{V}_{L_t^2 L_x^{\frac{2N}{3}}(S_\ell)} \norm{\nabla u_0}_{Z_{-3}}
\]
where
\[
S_\ell := \{\max(\vert t\vert , 2^{\ell-1}-\vert t\vert) \leq r\leq 2^{\ell+1} + \vert t\vert\}.
\]
If $\ell \leq 0,$ then
\begin{eqnarray*}
\norm{V}_{L_t^2L_x^{\frac{2N}{3}}(S_\ell)} \leq \norm{V}_{L_t^2 L_x^{\frac{2N}{3}} (\vert t\vert \leq r\leq \vert t\vert + 2^{\ell+1})} \lesssim 2^{\frac{3}{2N}\ell}.
\end{eqnarray*}
Similarly, if $\ell \geq 0,$ we have
\begin{eqnarray*}
\norm{V}_{L_t^2L_x^{\frac{2N}{3}}(S_\ell)} \leq \norm{V}_{L_t^2 L_x^{\frac{2N}{3}} (\max(\vert t\vert , 2^{\ell-2}) \leq r\leq \vert t\vert + 2^{\ell+1})} \lesssim 2^{-2\ell}.
\end{eqnarray*}
This implies that
\begin{eqnarray*}
\norm{V_{\{\lambda\}} u}_{L^1L^2(r \geq \vert t\vert)} & \lesssim& \sum_{\ell \in \Z} \norm{V_{\{\lambda\}} u_\ell}_{L^1L^2(r \geq \vert t\vert)} \\
&\leq & \norm{\nabla u_0}_{Z_{-3}}\left(\sum_{\ell \leq 0} (1+\vert \ell\vert) 2^{\frac{3\ell}{2N}} + \sum_{\ell \geq 0} 2^{-2\ell}\right) \lesssim \norm{\nabla u_0}_{Z_{-3}}.
\end{eqnarray*}
Now if we additionally assume that $u_0(r) = 0$ if $r \geq R$ for some $R \geq 1,$ we obtain
\[
\norm{V_{\{\lambda\}} u}_{L^1L^2(r \geq \vert t\vert)}\lesssim \norm{\nabla u_0}_{Z_{-3}} \sum_{\ell \leq \ell_0} (1+\vert \ell\vert)2^{\frac{3\ell}{2N}} \lesssim 2^{\frac{3\ell_0}{4N}} \norm{\nabla u_0}_{Z_{-3}} \lesssim R^{\frac{3}{4N}}\norm{\nabla u_0}_{Z_{-3}}.
\]
The last assertion follows similarly.
\end{proof}

We have the following lemma.
\begin{lemma}\label{distlemma}
Let $J \in \N.$ There exists a small enough $\gamma^* >0$ such that the following holds for any $\lambda \in \Lambda_J$ with $\gamma(\lambda) \leq  \gamma^*.$ For $1\leq j \leq J,$ let $\overrightarrow{\phi}_{j}$ and $\overrightarrow{\psi}_{j}$ denote the solutions to \eqref{multieq} with initial data $(0,(\Lambda W)_{[\lambda_j]})$ and $((\Lambda W)_{(\lambda_j)},0)$ respectively. Then for any $1\leq j \leq J,$ we have
\[
\sup_{t \in \R} \norm{\overrightarrow{\phi}_{j,k} - (t(\Lambda W)_{[\lambda_j]},\Lambda W_{[\lambda_j]})}_{\mathcal{H}_{\vert t\vert}} \lesssim \gamma(\lambda)
\]
\[
\sup_{t \in \R} \norm{\overrightarrow{\psi}_{j,k} - (\Lambda W_{(\lambda_j)},0)}_{\mathcal{H}_{\vert t\vert}} \lesssim \gamma(\lambda).
\]
\end{lemma}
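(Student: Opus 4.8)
The plan is to recognise the two profiles appearing in the statement as \emph{exact} solutions of the one-soliton linearised equation at scale $\lambda_j$, and then to view the remaining $J-1$ potentials $V_{\{\lambda_k\}}$, $k\neq j$, as a perturbation that is absorbed by the exterior Strichartz estimate of Lemma~\ref{multistrich}. First note that $(\Lambda W)_{[\lambda_j]}$ and $(\Lambda W)_{(\lambda_j)}$ both lie in $\ker(-\Delta+V_{\{\lambda_j\}})$: indeed $-\Delta(\Lambda W)+V\,\Lambda W=0$, and under any dilation $f\mapsto\kappa^{-\beta}f(\cdot/\kappa)$ both $-\Delta$ and multiplication by $V_{\{\kappa\}}$ pick up the same factor $\kappa^{-2}$, so membership in the kernel is preserved regardless of the normalisation $\beta$. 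Consequently $t(\Lambda W)_{[\lambda_j]}$ solves $\partial_{tt}u-\Delta u+V_{\{\lambda_j\}}u=0$ with data $(0,(\Lambda W)_{[\lambda_j]})$, and $(\Lambda W)_{(\lambda_j)}$, read as a $t$-independent function, solves the same equation with data $((\Lambda W)_{(\lambda_j)},0)$; using \eqref{Lambdaasymp} and $N\ge8$ one checks both have finite exterior energy $\sup_{t}\|\cdot\|_{\mathcal H_{|t|}}$.

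\textbf{Step 2 (the perturbed equation and the reduction).} Put $\overrightarrow{w}_j:=\overrightarrow{\phi}_j-(t(\Lambda W)_{[\lambda_j]},(\Lambda W)_{[\lambda_j]})$ and $\overrightarrow{v}_j:=\overrightarrow{\psi}_j-((\Lambda W)_{(\lambda_j)},0)$; both have vanishing data at $t=0$. Subtracting the $V_\lambda$-equation from the $V_{\{\lambda_j\}}$-equation and using $V_\lambda-V_{\{\lambda_j\}}=\sum_{k\neq j}V_{\{\lambda_k\}}$ one finds
\[
(\partial_{tt}-\Delta+V_\lambda)w_j=-\Big(\sum_{k\neq j}V_{\{\lambda_k\}}\Big)t(\Lambda W)_{[\lambda_j]},
\]
and likewise $(\partial_{tt}-\Delta+V_\lambda)v_j=-\big(\sum_{k\neq j}V_{\{\lambda_k\}}\big)(\Lambda W)_{(\lambda_j)}$. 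Lemma~\ref{multistrich} with $R=0$ and zero data (after restricting the forcing to the cone $\{r\ge|t|\}$ by finite propagation speed) then yields
\[
\sup_{t\in\R}\|\overrightarrow{w}_j(t)\|_{\mathcal H_{|t|}}\lesssim\sum_{k\neq j}\|V_{\{\lambda_k\}}\,t(\Lambda W)_{[\lambda_j]}\|_{L^1_tL^2_x(r\ge|t|)},
\]
and the analogous bound for $\overrightarrow{v}_j$ with $(\Lambda W)_{(\lambda_j)}$ in place of $t(\Lambda W)_{[\lambda_j]}$.

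\textbf{Step 3 (the scale-separated cross term) and the main difficulty.} What remains is to bound, for $k\neq j$, the quantity $\|V_{\{\lambda_k\}}\,t(\Lambda W)_{[\lambda_j]}\|_{L^1_tL^2_x(r\ge|t|)}$, a computation in the spirit of Lemma~\ref{L43}. Using the pointwise bounds $|V_{\{\lambda_k\}}(x)|\lesssim\lambda_k^{-2}\langle|x|/\lambda_k\rangle^{-4}$ and $|(\Lambda W)_{[\lambda_j]}(x)|\lesssim\lambda_j^{-N/2}\langle|x|/\lambda_j\rangle^{-(N-2)}$ (from the definitions of $V$, $W$ and from \eqref{Lambdaasymp}), rescale to $\lambda_j=1$, set $\mu:=\lambda_k/\lambda_j$, and split $\{|x|\ge|t|\}$ into the three regions $\{|x|\lesssim\min(1,\mu)\}$, $\{\min(1,\mu)\lesssim|x|\lesssim\max(1,\mu)\}$, $\{|x|\gtrsim\max(1,\mu)\}$. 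On the outermost region the integrand decays like $|x|^{-(N+2)}$, so its $L^2_x$-norm over $\{|x|\ge|t|\}$ is $O(|t|^{-(N+4)/2})$ and, with the weight $|t|$, the $t$-integral converges exactly because $N\ge8$; a direct computation shows each of the three pieces is $\lesssim\min(\mu,\mu^{-1})^2=\min(\lambda_k/\lambda_j,\lambda_j/\lambda_k)^2$, up to a harmless $\langle\log\cdot\rangle$ factor when $N=8$. Since the scales are well separated, $\min(\lambda_k/\lambda_j,\lambda_j/\lambda_k)^2\le\gamma(\lambda)$ for every $k\neq j$, so summing the (at most $J-1$) terms and taking $\gamma^*$ small gives $\sum_{k\neq j}\|V_{\{\lambda_k\}}\,t(\Lambda W)_{[\lambda_j]}\|_{L^1_tL^2_x(r\ge|t|)}\lesssim\gamma(\lambda)$; the bound for $\overrightarrow{v}_j$ is obtained the same way and is strictly easier, the $t$-independence of $(\Lambda W)_{(\lambda_j)}$ only helping. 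The crux — and the only point where genuine care is needed — is precisely this last estimate: carrying the two scales $\lambda_j,\lambda_k$ through the computation, verifying that the linear-in-$t$ growth of $t(\Lambda W)_{[\lambda_j]}$ is controlled on the exterior cone $\{r\ge|t|\}$ (which is where $N\ge8$ enters), and extracting a true positive power of the scale ratio, with attention to the logarithmic losses special to even dimensions and in particular to $N=8$.
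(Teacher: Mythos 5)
Your proof is correct and follows the same route the paper takes in its (omitted) argument: subtract the explicit one-soliton solution $t(\Lambda W)_{[\lambda_j]}$ (resp.\ $(\Lambda W)_{(\lambda_j)}$), observe the difference solves the forced multisoliton equation with zero data and forcing $-\sum_{k\neq j}V_{\{\lambda_k\}}\cdot t(\Lambda W)_{[\lambda_j]}$, apply the exterior Strichartz estimate of Lemma~\ref{multistrich}, and then estimate the $L^1_tL^2_x$-norm of the cross term on the exterior cone by a positive power of the scale ratio. The scale-separation computation you sketch in Step~3, including the $N=8$ logarithm and the convergence of the $t$-integral thanks to $N\ge8$, is exactly the content of the suppressed ``direct computations.''
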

The proof follows from direct computations, which we omit here.

\subsection{Non resonant component}
In this section we will consider solutions satisfying $u(t,r) = (-1)^{\frac{N}{2}}u(-t,r).$ We will show the following lemma.

\begin{prop}\label{multifirsthalf}
For any $J\in\N,$ there are $\gamma^*,C>0$ such that for all $\lambda \in \Lambda_J$ with $\gamma(\lambda) \leq \gamma^*$
any radial solution to \eqref{multieq} satisfies
\[
\norm{\Pi_{\dot{H}^1,\lambda}^\perp u_0}_{\dot{H}^1}^2 \lesssim \sum_{\pm} \lim_{t \to \pm \infty} \int_{r \geq \vert t\vert} \vert \nabla_{t,x} u\vert^2 + \gamma(\lambda)\norm{u_0}_{\dot{H}^1}^2
\]
in the case $N \equiv 0 \mod 4,$ and
\[
\norm{\Pi_{L^2,\lambda}^\perp u_0}_{L^2}^2 \lesssim \sum_{\pm} \lim_{t \to \pm \infty} \int_{r \geq \vert t\vert} \vert \nabla_{t,x} u\vert^2 + \gamma(\lambda) \norm{u_0}_{L^2}^2
\]
if $N \equiv 2 \mod 4.$
\end{prop}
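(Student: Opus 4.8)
The plan is to prove the estimate by induction on the number of solitons $J$, organized — as in Lemma \ref{notinterestingsingle} — as a contradiction/compactness argument, using the separation of the scales $\lambda_1,\dots,\lambda_J$ to treat the interaction between solitons perturbatively. For $J=1$ the potential $V_\lambda=V_{\{\lambda_1\}}$ is the $\{\lambda_1\}$-rescaling of $V$ and $\gamma(\lambda)$ is vacuous; rescaling by $\lambda_1$ turns \eqref{multieq} into \eqref{LW} and fixes $\{r\geq|t|\}$, and the hypothesis $u(t,r)=(-1)^{N/2}u(-t,r)$ puts us in the even-in-time ($N\equiv 0\bmod 4$) or odd-in-time ($N\equiv 2\bmod 4$) branch of Lemma \ref{notinterestingsingle} with $R=0$, which is exactly the assertion. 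Below I discuss only $N\equiv 0\bmod 4$; the other case is identical with $\dot{H}^1$ replaced by $L^2$, $\Lambda W$ by its $L^2$-critical rescalings, and the corresponding halves of the auxiliary lemmas.

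For the inductive step, assume the estimate for $J-1$ solitons and suppose it fails for $J$: there are $\lambda^{(n)}\in\Lambda_J$ with $\gamma(\lambda^{(n)})\leq\gamma^*$ and solutions $u_n$ of \eqref{multieq} satisfying the symmetry with $\norm{\Pi_{\dot{H}^1,\lambda^{(n)}}^\perp u_{0,n}}_{\dot{H}^1}=1$ while $\sum_\pm\lim_{t\to\pm\infty}\int_{r\geq|t|}|\nabla_{t,x}u_n|^2+\gamma(\lambda^{(n)})\norm{u_{0,n}}_{\dot{H}^1}^2\to 0$. Rescale so that $\lambda_J^{(n)}=1$; then the scales are widely separated and, along a subsequence, $\lambda_j^{(n)}\to\lambda_j^\infty\in(1,\infty]$ for $j<J$. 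Write $u_{0,n}=\sum_{j,k}c_{j,k,n}(T_k^\infty)_{(\lambda_j^{(n)})}+v_n$, the sum running over the generalized eigenfunctions removed by $\Pi_{\dot{H}^1,\lambda^{(n)}}^\perp$, with $v_n=\Pi_{\dot{H}^1,\lambda^{(n)}}^\perp u_{0,n}$, so $\norm{v_n}_{\dot{H}^1}=1$.

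The core is then as in Lemma \ref{notinterestingsingle}. First, $v_n\rightharpoonup 0$ weakly in $\dot{H}^1$: any weak limit $v^*$ is $\dot{H}^1$-orthogonal to every eigenfunction $(T_k^\infty)_{(\lambda_j^\infty)}$ (weak convergence plus finiteness of the norms involved), while the solution built from $v^*$ is non-radiative, so Proposition \ref{rigidity} for the innermost soliton together with the $(J-1)$-soliton inductive hypothesis for the rest — both applicable because, by Lemmas \ref{L43}, \ref{distlemma} and \ref{multistrich}, the smaller solitons and the remaining potential become negligible after localizing near the appropriate scale — force $v^*$ into the span of the eigenfunctions, hence $v^*=0$. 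Second, if $v_{F,n}$ is the free wave with data $v_n$, the even-dimensional free-wave channels estimate (Lemma~3.7 of \cite{exterior10}, used in Cases~1--2 of Lemma \ref{notinterestingsingle}) gives $\lim_{t\to\infty}\int_{r\geq|t|}|\nabla_{t,x}v_{F,n}|^2\geq C\norm{v_n}_{\dot{H}^1}^2-o(1)=C-o(1)$, the subtracted pairings vanishing since $v_n\rightharpoonup 0$; but the exterior energy of $v_{F,n}$ is bounded by that of $u_n$ (which tends to $0$) plus the errors of discarding the eigenfunction part of $u_{0,n}$ and of replacing \eqref{multieq} by the free wave, which also tend to $0$ once the $c_{j,k,n}$ are bounded and the potential is controlled via Lemmas \ref{L43}, \ref{distlemma} and \ref{multistrich}. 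This contradiction completes the induction, and the quantitative estimate (with the $\gamma(\lambda)\norm{u_0}_{\dot{H}^1}^2$ error, linear in $\gamma$ thanks to the $r^{-4}$ decay of the potentials) follows in the usual way.

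The main obstacle — and the genuinely new point relative to $N=6,8$ — is controlling the coefficients $c_{j,k,n}$. In dimension $N$ each soliton contributes order $N$ functions $T_k^\infty$ (with their $T_k^0$ partners), so one must show that the Gram-type matrix of all of them across all $J$ widely separated scales is invertible with a bound uniform over $\lambda$ with $\gamma(\lambda)\leq\gamma^*$, and that this invertibility survives the $O(\gamma)$ off-diagonal coupling between scales — the ``linear systems'' flagged in the abstract. This rests on the sharp asymptotics $T_k^\infty\sim r^{-N+2+2k}$ (Proposition \ref{varofpars}) and $\tilde{T}_k^0\sim r^2$ as $r\to 0$ (Lemma \ref{decompo}) to evaluate the matrix and its inverse, exactly as in the $R\to 0$ bookkeeping of Case~4 of Lemma \ref{notinterestingsingle}, but now with several scales present at once.
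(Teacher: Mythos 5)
Your proposal takes a genuinely different route from the paper. You recast Proposition~\ref{multifirsthalf} as a compactness/contradiction argument with induction on the number $J$ of solitons, rescaling so the smallest scale is $1$, extracting a weak limit $v^*$, and invoking Proposition~\ref{rigidity} together with the free-wave channels estimate --- essentially the architecture of Lemma~\ref{notinterestingsingle} rerun in the multisoliton setting. The paper's proof is instead a direct quantitative estimate. After a preliminary reduction (via Lemma~\ref{distlemma}) that subtracts off the eigenfunction component and absorbs its exterior-energy contribution into the $\gamma(\lambda)\norm{u_0}^2$ error, it writes $u$ as a solution of the \emph{single}-soliton equation centered at $\lambda_j$ with forcing $-\sum_{k\ne j}V_{\{\lambda_k\}}u$, and applies Lemma~\ref{claim46} (a quantitative corollary of Lemma~\ref{notinterestingsingle} with forcing) together with H\"older and the exterior Strichartz estimate of Lemma~\ref{multistrich} to bound the projection at scale $\lambda_j$ on the annulus $\{r \geq \eta_j\lambda_j\}$. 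This sets up an iteration over the soliton index $j$ from the outermost scale inward, with nested cutoff radii $\eta_j\lambda_j$, and the interaction terms produce the $\gamma(\lambda)/\eta_j^2$ factor explicitly. The only contradiction argument used is the auxiliary coercivity bound $\norm{\bar P_{\lambda_j,\eta_j\lambda_j}u_0}\gtrsim\norm{u_0}_{\dot H^1_{\eta_j\lambda_j}}$ modeled on Case~4 of Lemma~\ref{notinterestingsingle}. The paper's route has the advantage that the error $\gamma(\lambda)\norm{u_0}^2$ is produced directly and at the right power by the $L^1_tL^2_x$ estimates on the cross terms.

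There are two concrete gaps in what you propose. First, a compactness argument does not by itself deliver the quantitative error: under your contradiction hypothesis you have $\gamma(\lambda^{(n)})\norm{u_{0,n}}^2\to 0$ together with $\norm{\Pi^\perp u_{0,n}}_{\dot H^1}=1$, but this gives no a priori control of $\norm{u_{0,n}}_{\dot H^1}$ itself --- the projected-out eigenfunction part can diverge. The claim that ``the quantitative estimate follows in the usual way'' is where all the work actually lives; this is precisely what Lemma~\ref{distlemma} and Lemma~\ref{L43} supply in the paper's direct proof. Second, after rescaling by $\lambda_J^{(n)}$ the remaining ratios $\lambda_j^{(n)}/\lambda_J^{(n)}$ diverge, so $V_{\lambda^{(n)}}$ does not converge to a fixed multisoliton potential, and the weak limit $v^*$ cannot be analyzed through Proposition~\ref{rigidity} for a single limiting equation; one is forced to decompose scale by scale, which is in effect the paper's inductive step in disguise. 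You are right that the multiscale Gram-matrix control of the $c_{j,k,n}$ is a central new difficulty in high dimensions, but the paper avoids a global Gram matrix across all scales by reducing, through Lemma~\ref{claim46}, to one-scale linear systems at each $\lambda_j$.
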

For the proof we will need the following lemma.
For $R>0, \mu>0,$ let $\bar{P}_{\mu,R}^2$ be the orthogonal projection on the orthogonal of $\mathrm{Span}\{(T_k^\infty)_{(\mu)}:0\leq k \leq \frac{N-6}{2}\}$ in $\dot{H}^1_R$ if $N \equiv 0 \mod 4$ and the orthogonal projection on the orthogonal of $\mathrm{Span}\{(T_k^\infty)_{[\mu]}:0\leq k \leq \frac{N-6}{2}\}$ in $L^2_R$ if $N \equiv 2 \mod 4.$
\begin{lemma}\label{claim46}
For all $\eta\geq 0,$ there is a $C_\eta>0$ such that for all $\mu>0$ and a solution $u$ of
\[
\p_t^2u-\Delta u + V_{\{\mu\}} u =f, \; \vert x\vert > \eta\mu+\vert t\vert
\]
with $\overrightarrow{u}(0)\in \mathcal{H}, f \in L^1_tL^2_x,$ we have
\[
\norm{\bar{P}_{\mu,\eta\mu} \p_t u(0)}_{\dot{H}_{\eta\mu}^1}^2 \leq C_\eta \norm{1_{\vert x\vert > \eta \mu+\vert t\vert}f}_{L^1_tL^2_x}^2 + C_\eta\sum_{\pm} \lim_{t\to \pm \infty} \int_{\vert x\vert > \eta \mu +\vert t\vert} \vert \nabla_{t,x}u(t,x)\vert^2 dx
\]
if $N \equiv 0 \mod 4,$ and
\[
\norm{\bar{P}_{\mu,\eta\mu} \p_t u(0)}_{L_{\eta\mu}^2}^2 \leq C_\eta \norm{1_{\vert x\vert > \eta \mu+\vert t\vert}f}_{L^1_tL^2_x}^2 + C_\eta\sum_{\pm} \lim_{t\to \pm \infty} \int_{\vert x\vert > \eta \mu +\vert t\vert} \vert \nabla_{t,x}u(t,x)\vert^2 dx
\]
\end{lemma}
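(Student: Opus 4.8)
The plan is threefold: (i) reduce to $\mu=1$ by scaling; (ii) peel off the contribution of the source $f$; (iii) for the resulting homogeneous solution, run the compactness argument of Lemma \ref{notinterestingsingle}. For (i), if $u$ solves $\p_t^2u-\Delta u+V_{\{\mu\}}u=f$ on $\{|x|>\eta\mu+|t|\}$, then $v(t,x):=\mu^{\frac{N-2}{2}}u(\mu t,\mu x)$ solves $\p_t^2v-\Delta v+V_{\{1\}}v=\tilde f$ on $\{|x|>\eta+|t|\}$ with $\tilde f(t,x):=\mu^{\frac{N+2}{2}}f(\mu t,\mu x)$. This critical rescaling carries $\{|x|>\eta\mu+|t|\}$ onto $\{|x|>\eta+|t|\}$, leaves both $\|1_{|x|>\eta\mu+|t|}f\|_{L^1_tL^2_x}$ and $\sum_\pm\lim_{t\to\pm\infty}\int_{|x|>\eta\mu+|t|}|\nabla_{t,x}u|^2$ unchanged, and conjugates $\bar P_{\mu,\eta\mu}$ into $\bar P_{1,\eta}$, the orthogonal projection onto the orthogonal complement of $\mathrm{Span}\{T_k^\infty:0\le k\le\frac{N-6}{2}\}$ in $\dot H^1_\eta$ (resp.\ $L^2_\eta$). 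Hence it suffices to treat $\mu=1$, with a constant depending only on $\eta$ and $N$.

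For (ii), pick $(\tilde u_0,\tilde u_1)\in\mathcal H$ extending $\overrightarrow u(0)\,1_{\{|x|>\eta\}}$ with $\|(\tilde u_0,\tilde u_1)\|_{\mathcal H}\lesssim\|\overrightarrow u(0)\|_{\mathcal H_\eta}$, as in the proof of Lemma \ref{extstrichartz}; let $u^{(1)}$ solve $\p_t^2u^{(1)}-\Delta u^{(1)}+V_{\{1\}}u^{(1)}=0$ with data $(\tilde u_0,\tilde u_1)$, and $u^{(2)}$ solve $\p_t^2u^{(2)}-\Delta u^{(2)}+V_{\{1\}}u^{(2)}=1_{\{|x|>\eta+|t|\}}f$ with zero data. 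By finite propagation speed $u=u^{(1)}+u^{(2)}$ on $\{|x|>\eta+|t|\}$; since $u^{(2)}$ has zero data, $\bar P_{1,\eta}\p_tu(0)=\bar P_{1,\eta}\p_tu^{(1)}(0)$, and Lemma \ref{multistrich} (with $J=1$, $\lambda_1=1$) applied to $u^{(2)}$ yields $\sup_t\|\overrightarrow u^{(2)}(t)\|_{\mathcal H_{\eta+|t|}}\lesssim\|1_{\{|x|>\eta+|t|\}}f\|_{L^1_tL^2_x}$, so that the exterior energy of $u^{(2)}$, and hence the difference between the exterior energies of $u$ and $u^{(1)}$, is $\lesssim\|1_{\{|x|>\eta+|t|\}}f\|_{L^1_tL^2_x}^2$. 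It therefore suffices to bound $\bar P_{1,\eta}\p_tu^{(1)}(0)$ by the exterior energy of the homogeneous solution $u^{(1)}$.

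For (iii), I argue as in Lemma \ref{notinterestingsingle}. Replacing $u^{(1)}$ by its part of the time parity demanded by Proposition \ref{rigidity} (still a homogeneous solution, with no larger exterior energy, and on which $\bar P_{1,\eta}\p_t(\cdot)(0)$ is unchanged), assume toward a contradiction there are such solutions $u_n$ with $\|\bar P_{1,\eta}\p_tu_n(0)\|=1$ and $\sum_\pm\lim_{t\to\pm\infty}\int_{|x|>\eta+|t|}|\nabla_{t,x}u_n|^2\to0$. Write $\p_tu_n(0)=\sum_kc_{k,n}T_k^\infty+g_n$ with $g_n\perp\{T_k^\infty\}$ in $\dot H^1_\eta$ (resp.\ $L^2_\eta$), so $\|g_n\|=1$, and pass to a weak limit $v^*$ of the data. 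The convex-hull and finite-speed argument of Lemma \ref{notinterestingsingle}, together with Proposition \ref{rigidity}, shows that the solution with data $v^*$ is non-radiative, hence lies in $\mathrm{Span}\{S_k^{\infty,0}\}$ (resp.\ $\mathrm{Span}\{S_k^{\infty,1}\}$), so $v^*\in\mathrm{Span}\{T_k^\infty\}$ and $v^*=0$ by the orthogonality. The free exterior-energy lower bound (Lemma 3.7 of \cite{exterior10}), whose error terms are the projections onto the profiles $r^{-(N-2j-2)}$ that share the large-$r$ asymptotics of the $T_k^\infty$ and hence vanish along the sequence by weak convergence, then forces $\liminf_n\sum_\pm\lim_{t\to\pm\infty}\int_{|x|>\eta+|t|}|\nabla_{t,x}u_{F,n}|^2\gtrsim1$, a contradiction. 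For $\eta=0$ this is Case 2 of Lemma \ref{notinterestingsingle} and for fixed $\eta>0$ it is Case 1; since $\eta$ is fixed, the delicate small-radius analysis of Cases 3--4 is not needed, and the functions $\chi_0\overrightarrow T_k^0$ do not enter the characterization of $v^*$, because on a cone with fixed positive vertex-offset the corresponding solutions are radiative. Modulo Lemma \ref{notinterestingsingle} and Proposition \ref{rigidity}, the remaining content is light; the one genuinely new ingredient — and the point that needs the most care — is the source term $f$, absorbed in step (ii) via Lemma \ref{multistrich}.
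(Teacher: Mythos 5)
Your steps (i) and (ii) match the paper's reduction essentially verbatim: scale to $\mu=1$, and split off the contribution of $f$ by solving the inhomogeneous problem with truncated source and zero data, controlled by the Strichartz estimate (Lemma \ref{multistrich}), so that the exterior energy of the homogeneous piece is comparable to that of $u$ up to $\|f\|_{L^1L^2}^2$. (As an aside, the lemma statement writes $\p_t u(0)$ with the $\dot H^1_{\eta\mu}$ norm in the $N\equiv0\pmod 4$ case — the paper's own proof clarifies via the parity reduction that the relevant quantity there is $u(0)$; you inherit the same slip, but the intent is clear.)

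Where you diverge is step (iii), and this is a genuinely different route for the case $0<\eta<1$. The paper does \emph{not} re-run a compactness argument: it invokes Lemma \ref{notinterestingsingle} as a black box, which for $\eta\in(0,1)$ controls the \emph{stronger} projection $\Pi^\perp_\eta$ (orthogonal to both $\{T^\infty_k\}$ and $\{\chi_0T^0_k\}$), and then upgrades to the weaker projection $\bar P_{1,\eta}$ by separately bounding the coefficients $c_k^0$ of $\chi_0T^0_k$. That extra step rests on \eqref{eq12}: the solutions $\tilde S_k^0$ with data $\chi_0\overrightarrow T_k^0$ have exterior energy bounded above and below on any cone $\{r\geq R+|t|\}$ with $R>0$, and the positive definiteness of the associated Gram matrix converts the smallness of the exterior energy of $\sum c_k^0\chi_0T_k^0 = u_0-\sum c_k^\infty T_k^\infty-\Pi^\perp u$ into $c_k^0\lesssim\sqrt{E_{\mathrm{out}}(\eta)}$. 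You instead run the compactness--rigidity argument directly for $\bar P_{1,\eta}$: extract $g_n\rightharpoonup g^*$, show via Proposition \ref{rigidity} (applied on cones $\{r>R+|t|\}$, $R>\eta$) that $g^*\in\mathrm{Span}\{T_k^\infty\}$ and hence $g^*=0$ by orthogonality, then apply the free exterior-energy lower bound. This works precisely because, with $\eta>0$ fixed, one never faces $R_n\to0$ (the paper's Case 4), which is the only place the auxiliary functions $\chi_0T^0_k$ are truly needed; and the free-channel error terms (projections onto $r^{-(N-2j-2)}$) vanish by weak convergence to $0$ since those profiles lie in $\dot H^1_\eta\times L^2_\eta$ for $\eta>0$. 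Your route is therefore correct and more self-contained; the paper's is shorter once Lemma \ref{notinterestingsingle} and \eqref{eq12} are in hand, and it avoids re-verifying the convex-hull/rigidity machinery. One small imprecision worth tightening: the error terms in the free lower bound vanish because $g_n\rightharpoonup0$ (and the test profiles are in the ambient Hilbert space for $\eta>0$), not because they "share the large-$r$ asymptotics of the $T_k^\infty$."
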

\begin{proof}
By scaling we can assume $\mu = 1.$ By time symmetry, we can assume $\p_t u(0) = 0$ if $N \equiv 0 \mod 4$ and $u(0)=0$ if $N \equiv 2 \mod 4.$ Now let $u_{\mathrm{hom}}$ be the solution to
\[
\left\{
\begin{matrix}
\p_t^2 u_{\mathrm{hom}} -\Delta u_{\mathrm{hom}} + V_{\{\mu\}} u_{\mathrm{hom}}=0\\
(u_{\mathrm{hom}}(0),\p_t (u_{\mathrm{hom}}(0))) = (u(0),\p_tu(0))
\end{matrix}
\right.
\]
and $\bar{u}$ the solution to
\[
\left\{
\begin{matrix}
\p_t^2 \bar{u} - \Delta \bar{u} + V_{\{\mu\}}\bar{u} = f,\\
(\bar{u}(0),\p_t \bar{u}(0))=(0,0).
\end{matrix}
\right.
\]
Then if $N \equiv 0 \mod 4,$
\[
\norm{\Pi_{\mu,\eta\mu}^\perp u(0)}_{\dot{H}^1_{\eta\mu}}^2 \lesssim \norm{\Pi_{\mu,\eta\mu}^\perp u_{\mathrm{hom}}}_{\dot{H}^1_{\eta\mu}}^2 + \norm{\bar{u}(0)}_{\dot{H}_{\eta\mu}^1}^2
\]
and
\[
\norm{\Pi_{\mu,\eta\mu}^\perp u(0)}_{L^2_{\eta\mu}(0)}^2 \lesssim \norm{\Pi_{\mu,\eta\mu}^\perp u_{\mathrm{hom}}}_{L^2_{\eta\mu}}^2 + \norm{\bar{u}(0)}_{L^2_{\eta\mu}}^2
\]
if $N \equiv 2 \mod 4.$
Now let $\tilde{u}$ be the solution to
\[
\left\{
\begin{matrix}
\p_t^2\tilde{u} - \Delta \tilde{u} + V_{\{\mu\}}\tilde{u} = f1_{\vert x\vert > \eta\mu+\vert t\vert},\\
(\tilde{u}(0),\p_t\tilde{u}(0))=(0,0).
\end{matrix}
\right.
\]
By finite propagation speed,
\[
\norm{\bar{u}}_{\dot{H}_{\eta\mu}^1} + \norm{\p_t \bar{u}}_{L_{\eta\mu}^2}^2 \lesssim \norm{\tilde{u}}_{\dot{H}_{\eta\mu}^1} + \norm{\p_t \tilde{u}}_{L_{\eta\mu}^2}^2.
\]
Now by standard energy estimates,
\[
\norm{\tilde{u}}_{\dot{H}^1}^2 + \norm{\p_t \tilde{u}}_{L^2}^2 \lesssim \norm{f}_{L^1L^2(\vert x\vert > \eta\mu+\vert t\vert)}.
\]
Therefore it suffices to show the statement for $u_{\mathrm{hom}},$ that is for the case $f \equiv 0.$

Additionally note that if $\eta= 0,$ then the result is Lemma \ref{notinterestingsingle} for $R=0.$ We will then assume $\eta>0.$ Using Lemma \ref{notinterestingsingle} we can decompose $u_0=u(0)$ as
\[
u_0 = \sum_{k=0}^{\frac{N-6}{2}} c_k^\infty T_k^\infty + c_k^0 T_k^0 + \Pi^\perp u,
\]
with
\[
\norm{\Pi^\perp u}_{\mathcal{A}_N}^2 \lesssim E_{\mathrm{out}}(\eta):= \lim_{t\to \infty} \int_{\vert x\vert > \eta+\vert t\vert} \vert \nabla_{t,x}u(t,x)\vert^2 dx.
\]
Rewriting this as
\[
\sum_{k=0}^{\frac{N-6}{2}}c^0_k T_k^0 =u_0-\sum_{k=0}^{\frac{N-6}{2}} c_k^\infty T_k^\infty - \Pi^\perp u
\]
and taking the exterior energy as in the proof of Lemma \ref{notinterestingsingle} together with \eqref{eq12} we obtain that
\[
c^0_k\lesssim (E_{\mathrm{out}}(\eta))^{\frac{1}{2}}.
\]
This implies that
\[
\norm{\Pi^\perp u}_{\mathcal{A}_N}^2+\sum_{k=0}^{\frac{N-6}{2}}(c_k^0)^2 \lesssim E_{\mathrm{out}}(\eta).
\]
This completes the proof.
\end{proof}
Now we will prove Proposition \ref{multifirsthalf}.
\begin{proof}[Proof of Proposition \ref{multifirsthalf}.]
We can proceed as in the proof of Lemma 4.5 in \cite{cdkm}. First we can assume by time symmetry that
\[
(u,\p_t u)\vert_{t=0} = \left\{
\begin{matrix}
(u_0,0) \mbox{ if } N\equiv 0, \mod 4\\
(0,u_1) \mbox{ if } N \equiv 2 \mod 4.
\end{matrix}
\right.
\]
Moreover, we can decompose
\[
u_0 = \Pi_{\mu}^\perp u + v_0
\]
with
\[
v_0 \in \mathrm{Span}\left\{(T_k^\infty)_{(\lambda_j)}:0 \leq k \leq \frac{N-6}{2}, 1\leq j \leq J\right\}
\]
if $N \equiv 0 \mod 4,$
\[
u_1 = \Pi_{\mu}^\perp u + v_1
\]
with
\[
v_1 \in \mathrm{Span}\left\{(T_k^\infty)_{[\lambda_j]}:0 \leq k \leq \frac{N-6}{2}, 1\leq j \leq J\right\}
\]
if $N \equiv 2 \mod 4.$ Let $v$ be the solution to \eqref{LW} with initial condition $(v_0,0)$ if $N \equiv 0 \mod 4$ and $(0,v_1)$ if $N \equiv 2 \mod 4,$ we see by Lemma \ref{distlemma} that
\[
\lim_{t \to \infty} \int_{\vert x\vert > \vert t\vert} \vert \nabla_{t,x} v(t,x)\vert^2 dx \lesssim \gamma(\lambda) \norm{(v_0,v_1)}_{\dot{H}^1\times L^2}^2 \leq \gamma(\lambda) \norm{(u_0,u_1)}_{\dot{H}^1\times L^2}^2.
\]
Thus we can reduce to the case that $v=0.$

We have
\[
\p_t^2 u -\Delta u - V_{\{\lambda_j\}} u = \sum_{k \neq j} V_{\{\lambda_k\}} u.
\]
By Lemma \ref{claim46}, we have for $N \equiv 0 \mod 4$
\[
\norm{P_{\lambda_j,\eta_j\lambda_j}u_0}_{\dot{H}^1_{\eta_j\lambda_j}} \leq C_{\eta_j} \sum_{k \neq j} \norm{1_{\vert x\vert > \eta_j \lambda_j+\vert t\vert}V_{\{\lambda_k\}}u}_{L^1L^2} + C_{\eta_j}\sqrt{E_{\mathrm{out}}},
\]
where $\bar{P}_{\lambda,\rho}^\perp$ is the $\dot{H}^1_\rho$ (respectively $L_\rho^2$) projection onto the orthogonal of $\mathrm{Span}\{(T_k^\infty)_{(\lambda_j)}\}_{k=0}^{\frac{N-6}{2}}$ for $N \equiv 0 \mod 4$ (respectively $N \equiv 2 \mod 4$), with a similar statement for $N \equiv 2 \mod 4.$ If $k> j,$ we have
\begin{eqnarray*}
\norm{1_{\vert x\vert >\eta_j\lambda_j + \vert t\vert}V_{\{\lambda_k\}}u}_{L^1_tL^2_x} &\lesssim& \norm{1_{\vert x\vert > \eta_j\lambda_j+\vert t\vert} V_{\{\lambda_k\}}}_{L^{\frac{2(N+1)}{N+4}}_tL^{\frac{2(N+1)}{3}}_x}\norm{1_{\vert x\vert > \eta_j\lambda_j+\vert t\vert}u}_{L^{\frac{2(N+1)}{N-2}}_tL^{\frac{2(N+1)}{N-2}}_x}\\
&\lesssim& \norm{1_{\vert x\vert > \eta_j\lambda_j+\vert t\vert} V_{\{\lambda_k\}}}_{L^{\frac{2(N+1)}{N+4}}_tL^{\frac{2(N+1)}{3}}_x}\norm{u_0}_{\dot{H}^1_{\eta_j\lambda_j}}
\end{eqnarray*}
where in the second to last step we used Lemma \ref{multistrich}. 
Now let
\[
y=\frac{x}{\lambda_k},\; s=\frac{t}{\lambda_k}.
\]
Then
\begin{eqnarray*}
\norm{1_{\vert x\vert >\eta_j\lambda_j + \vert t\vert}V_{\{\lambda_k\}}}_{L^{\frac{2(N+1)}{N+4}}_tL^{\frac{2(N+1)}{3}}_x} &=& \frac{1}{\lambda_k^2} \lambda_k^{\frac{N+4}{2(N+1)}+\frac{3N}{2(N+1)}}\norm{\chi_{\{\vert y\vert \geq \frac{\eta_j\lambda_j}{\lambda_k} + \vert s\vert\}}(y) (1+\vert y\vert)^{-4}}_{L_s^{\frac{2(N+1)}{N+4}}L_y^{\frac{2(N+1)}{3}}}\\
&=& \norm{\chi_{\{\vert y\vert \geq \frac{\eta_j\lambda_j}{\lambda_k} + \vert s\vert\}}(y) (1+\vert y\vert)^{-4}}_{L_s^{\frac{2(N+1)}{N+4}}L_y^{\frac{2(N+1)}{3}}}
\end{eqnarray*}
Let $R=\eta_j\frac{\lambda_j}{\lambda_k}.$ We will split the integral into $s \in (0,1)$ and $s\in [1,\infty).$ If $s\geq 1,$ then $\vert y\vert>1$ and so $(1+\vert y\vert)=O(\vert  y\vert)$ which implies that
\[
\left(\int_{\vert y\vert > R+s}\frac{dy}{(1+\vert y\vert)^{4p}}\right)^{\frac{q}{p}} \lesssim \left(\int_{\vert y\vert > R+\vert s\vert} \frac{1}{\vert y\vert^{4p}}dy\right)^{\frac{q}{p}} = \left(\int_{R+s}^\infty \frac{r^{N-1}}{r^{4p}}\right)^{\frac{q}{p}} \lesssim \left(\frac{1}{(R+s)^{4p-N}}\right)^{\frac{q}{p}} = \frac{1}{(R+s)^{\frac{5N+8}{N+4}}},
\]
where
\[
q=\frac{2(N+1)}{N+4},\; p = \frac{2(N+1)}{3}.
\]
Now integrating in $s$ we obtain
\[
\norm{1_{\vert s\vert\geq 1}1_{\vert x\vert >\eta_j\lambda_j + \vert t\vert}V_{\{\lambda_k\}}}_{L^{\frac{2(N+1)}{N+4}}_tL^{\frac{2(N+1)}{3}}_x} \lesssim \left(\int_{\R} \frac{1}{(R+\vert s\vert)^{\frac{5N+8}{N+4}}}ds\right)^{\frac{1}{p}} \lesssim \frac{1}{R^2}.
\]
In the case $0 < s<1,$ we split this into the domains $r \geq 1$ and $R<1.$ In the first case, we have that $\vert y\vert > R+\vert s\vert$ and so we can follow the exact computations as in the case $s \geq 1.$ Now if instead $R \leq 1,$ we split into the cases that $\vert y\vert \geq 2$ and $R+\vert s\vert \leq \vert y\vert \leq 2.$ If $\vert y\vert \geq 2,$ then again $1+\vert y\vert =O(\vert y\vert),$ and so we can proceed as in the case that $\vert s\vert \geq 1.$ In the remaining case, we note that $1+\vert y\vert=O(1)$ and so combining these we obtain that
\[
\norm{1_{\vert s\vert\leq 1}1_{\vert x\vert >\eta_j\lambda_j + \vert t\vert}V_{\{\lambda_k\}}}_{L^{\frac{2(N+1)}{N+4}}_tL^{\frac{2(N+1)}{3}}_x} \lesssim  \frac{1}{R^2} +1 \lesssim \frac{1}{R^2}.
\]
This shows that
\[
\norm{1_{\vert s\vert\geq 1}1_{\vert x\vert >\eta_j\lambda_j + \vert t\vert}V_{\{\lambda_k\}}}_{L^{\frac{2(N+1)}{N+4}}_tL^{\frac{2(N+1)}{3}}_x} \lesssim \frac{\lambda_k^2}{\lambda_j^2\eta_j^2}.
\]
After following the same steps, we obtain
\[
\norm{\Pi_{\lambda_j,\eta_j\lambda_j}^\perp u_0} \leq \frac{\e_j}{2} \norm{u_0}_{\dot{H}^1}+C_{\e_j,\eta_j,\e_{j-1},\eta_{j-1}}\sqrt{E_{\mathrm{out}}}
\]
for $N \equiv 0 \mod 4$ and similarly for dimensions $N \equiv 2 \mod 4.$ We can also obtain the bound
\[
\left\vert \int_{\vert x\vert \geq \eta_j\lambda_j} \nabla u_0 \cdot \nabla (\Lambda W)_{(\lambda_j)}\right\vert = o_{\eta_j}(1)\norm{u_0}_{\dot{H}^1}
\]
for $N \equiv 0 \mod4$ and the analogous bound for $N\equiv 2\mod 4.$ Combining everything we obtain
\[
\norm{\bar{P}_{\lambda_j,\eta_j\lambda_j}}_{\dot{H}_{\eta_j\lambda_j}} \leq C_{\eta_j} \left(\e_{j-1} + o_{\eta_{j-1}}(1)+\frac{\gamma(\lambda)}{\eta_j^2}\right)\norm{u_0}_{\dot{H}_1} + C_{\e_j,\eta_j,\e_{j-1},\eta_{j-1}}\sqrt{E_{\mathrm{out}}}.
\]
Hence if $\gamma(\lambda)$ is small enough we obtain
\[
\norm{\bar{P}_{\lambda_j,\eta_j\lambda_j}}_{\dot{H}_{\eta_j\lambda_j}} \leq \frac{\e_j}{2} \norm{u_0}_{\dot{H}^1}+C_{\e_j,\eta_j,\e_{j-1},\eta_{j-1}} \sqrt{E}_{\mathrm{out}},
\]
with an analogous statement for $N \equiv 2 \mod 4.$ All that remains is to bound the projection onto $\{T_k^\infty\}_{k=1}^{\frac{N-6}{2}}.$ We can see by a contradiction argument (as in case 4 of the proof of Lemma \ref{notinterestingsingle}) that there exists an absolute constant $C$ such that
\[
\norm{\bar{P}_{\lambda_j,\eta_j\lambda_j}u_0}_{\dot{H}_{\eta_j\lambda_j}^1} \geq C\norm{u_0}_{\dot{H}^1_{\eta_j\lambda_j}}.
\]
Now we have
\begin{eqnarray*}
\norm{u_0}_{\dot{H}^1_{\eta_j\lambda_j}}  \leq C^{-1}\norm{\bar{P}_{\lambda_j,\eta_j\lambda_j}u_0}_{\dot{H}_{\eta_j\lambda_j}^1} \leq C^{-1} \left(\frac{\e_j}{2} \norm{u_0}_{\dot{H}^1}+C_{\e_j,\eta_j,\e_{j-1},\eta_{j-1}} \sqrt{E_{\mathrm{out}}} \right).
\end{eqnarray*}
This completes the induction. Now to finish the proof, we set $\eta_J = 0$ in the final step.
\end{proof}

\subsection{Resonant Component}
In this section we prove the other half of the energy estimate in Theorem \ref{mainmulti}.
\begin{prop}\label{p67}
For all $J \in \N,$ there exist $\gamma^*,C>0$ such that for all $\lambda \in \Lambda_J$ with $\gamma(\lambda)\leq \gamma^*$ and any radial solution of \eqref{multieq} satisfies
\[
\norm{\Pi_{L^2,\lambda}^\perp u_1}_{Z_{-3,\lambda}}^2 \leq C \left(\sum_{\pm} \lim_{t\to \pm \infty} \int_{r\geq \vert t\vert} \vert \nabla_{t,x} u \vert^2 dx + \gamma(\lambda)\norm{u_1}_{Z_{-3,\lambda}}^2\right)
\]
if $N \equiv 0 \mod 4,$ and similarly
\[
\norm{\nabla \Pi_{\dot{H}^1,\lambda}^\perp u_0}_{Z_{-3,\lambda}}^2 \leq C \left(\sum_{\pm} \lim_{t\to \pm \infty} \int_{r\geq \vert t\vert} \vert \nabla_{t,x} u \vert^2 dx + \gamma(\lambda)\norm{\nabla u_0}_{Z_{-3,\lambda}}^2\right)
\]
if $N \equiv 2 \mod 4.$
\end{prop}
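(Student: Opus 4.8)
The plan is to mimic the proof of Lemma~\ref{otherpart}---the single-soliton reduction of the resonant $Z$-norm estimate to the non-resonant one, effected by Lemma~\ref{56equiv}, Lemma~\ref{0improv} and (when $N\equiv2\bmod4$) the elliptic estimate of Subsection~\ref{other2}---but carried out \emph{localized around each of the $J$ soliton scales} $\lambda_1>\dots>\lambda_J$, using the smallness of $\gamma(\lambda)$ to decouple the solitons and invoking the non-resonant multisoliton estimate Proposition~\ref{multifirsthalf} (together with Lemma~\ref{claim46}) in place of Lemma~\ref{notinterestingsingle}. First I would use the time symmetry of \eqref{multieq} to reduce to $u_0\equiv0$ when $N\equiv0\bmod4$ and to $u_1\equiv0$ when $N\equiv2\bmod4$. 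Then, using a $Z_{-3,\lambda}$-weighted analogue of Lemma~\ref{distlemma}, I would split the data as $u_1=\Pi_{L^2,\lambda}^\perp u+v_1$ (resp.\ $u_0=\Pi_{\dot H^1,\lambda}^\perp u+v_0$) with $v_1$ in the span of the rescaled generalized eigenfunctions $\{(T_k^\infty)_{[\lambda_j]}:0\le k\le\frac{N-6}{2},\ 1\le j\le J\}$; the solution with data $(0,v_1)$ has exterior energy $O(\gamma(\lambda))\norm{u_1}_{Z_{-3,\lambda}}^2$, so subtracting it reduces everything to the case $v=0$ at the cost of the $\gamma(\lambda)\norm{u_1}_{Z_{-3,\lambda}}^2$ term in the statement. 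The remaining estimate would be proved by induction on $J$.

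For the inductive step, near the $j$-th soliton the equation reads $\partial_t^2u-\Delta u+V_{\{\lambda_j\}}u=-\sum_{k\ne j}V_{\{\lambda_k\}}u$; rescaling to $\lambda_j=1$ turns this into the single-soliton equation \eqref{LW} with forcing $f_j:=-\sum_{k\ne j}V_{\{\lambda_k\}}u$, considered on the exterior region $\{|x|>\eta_j\lambda_j+|t|\}$ for a fixed $\eta_j>0$. I would apply the inhomogeneous, spatially localized form of the chain Lemma~\ref{56equiv}$\,\to\,$Lemma~\ref{0improv}, with $\norm{u_L}_{\tilde Y}$ replaced---via Lemma~\ref{claim46} and Proposition~\ref{multifirsthalf} applied at each time slice---by $\sqrt{E_{\mathrm{out}}}$ plus a loss proportional to $\norm{u_1}_{Z_{-3,\lambda}}$, and with $f_j$ contributing $\norm{f_j}_{L^1_tL^2_x(|x|>\eta_j\lambda_j+|t|)}$. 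The forcing would be controlled by Lemma~\ref{L43} and Lemma~\ref{multistrich}: since after this rescaling $\lambda_k$ is either a scale $\lesssim\gamma(\lambda)^{1/2}$ (for $k>j$) or a scale $\gtrsim\gamma(\lambda)^{-1/2}$ (for $k<j$), each term $V_{\{\lambda_k\}}u$ with $k\ne j$ contributes a factor $\gamma(\lambda)^{\delta}$ (for some $\delta>0$) times $\norm{u_1}_{Z_{-3,\lambda}}$. As in the proof of Proposition~\ref{multifirsthalf} this closes the step with a loss $\frac{\eps_j}{2}\norm{u_1}_{Z_{-3,\lambda}}+C\sqrt{E_{\mathrm{out}}}$.

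To reassemble a global $Z_{-3,\lambda}$ bound I would split the dyadic scales $R>0$ according to which $\lambda_j$ (if any) $R$ is comparable to. On annuli comparable to some $\lambda_j$ the localized estimate above applies; on a gap between consecutive soliton scales, and above $\lambda_1$ or below $\lambda_J$, the potential $V_\lambda$ is uniformly small on the relevant annulus, so a free-wave channels-of-energy estimate (the $R$-large and $R=0$ cases of Section~\ref{firsthalfsec}) applies there. Crucially, the logarithmic factors $\langle\log(R/\langle\lambda_j\rangle)\rangle$ generated by the $\mathcal A$-based $Z_{-2}\to Z_{-4}$ mechanism of Lemma~\ref{0improv} are exactly what the denominator $\inf_{1\le j\le J}\langle R/\lambda_j\rangle$ of $\norm{\cdot}_{Z_{-3,\lambda}}$ is designed to absorb, so the single-scale bounds patch together into the claimed estimate; the finitely many coefficients along $\{(T_k^\infty)_{[\lambda_j]}\}$ and the associated $(T_k^0)$ directions near each scale would be controlled by a Gram-matrix/contradiction argument uniform in $\lambda$, as in Case~4 of the proof of Lemma~\ref{notinterestingsingle} and the closing paragraph of the proof of Proposition~\ref{multifirsthalf}. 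The $N\equiv2\bmod4$ case is handled identically, using the elliptic estimate of Subsection~\ref{other2} in place of the $\mathcal A$-reduction.

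I expect the main obstacle to be twofold. First, one must set up the $Z_{-3,\lambda}$-weighted version of Lemma~\ref{distlemma}---i.e.\ redo those direct computations with the forcing $\sum_{i\ne j}V_{\{\lambda_i\}}(\cdot)(\Lambda W)_{[\lambda_j]}$ measured in an $L^1_tL^2_x$ norm tuned so the output is $Z_{-3,\lambda}$-small rather than $L^2$-small---which is bookkeeping but delicate because the number of eigenfunctions grows linearly in $N$. Second, and more seriously, one must control uniformly in $\lambda$ the linear systems for the expansion coefficients along the generalized eigenfunctions $T_k^\infty,T_k^0$: the relevant Gram matrices in the $\dot H^1_R$ (or $L^2_R$) inner products restricted to scales near $\lambda_j$ must be shown invertible with inverses of controlled size as the $\lambda_j$ separate. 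This is exactly the higher-dimensional linear-algebra difficulty flagged in the abstract, and it already requires the determinant-order computations sketched only for $N=8$ in the commented remark following the proof of Lemma~\ref{notinterestingsingle}.
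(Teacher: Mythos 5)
Your high-level plan (reduce by time symmetry, peel off the neutral directions, then argue inductively over the scales $\lambda_1>\dots>\lambda_J$, localizing to a single soliton at each step and controlling the resulting coefficients via a Gram matrix) has the same general flavor as the paper's proof. However, there are two concrete points where the proposal departs from what the paper does, and the first of them is a genuine gap.

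First, you propose to split $u_1=\Pi_{L^2,\lambda}^\perp u_1+v_1$ at the outset with $v_1$ in the span of \emph{all} the rescaled generalized eigenfunctions $(T_k^\infty)_{[\lambda_j]}$, $0\le k\le\frac{N-6}{2}$, and to control the corresponding solution by a $Z_{-3,\lambda}$-weighted version of Lemma~\ref{distlemma}. This cannot work as stated. For $N\ge 12$ (and for $N\equiv 0\bmod 4$ already from $N=12$) the functions $T_k^\infty$ for $k\ge 1$ decay like $r^{-N+2+2k}$, hence are not in $L^2$ (resp.\ $\dot H^1$), so the global orthogonal projection you invoke is ill-defined; moreover Lemma~\ref{distlemma} is only stated for $\Lambda W=T_0^\infty$, and the extension to higher $T_k^\infty$ is precisely part of what must be proved, not assumed. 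The paper sidesteps both issues by only splitting off $\Lambda W$ at this stage (so $v_1=\sum_j\alpha_j(\Lambda W)_{[\lambda_j]}$) and then handling the $T_k^\infty$ for $k\ge 1$ \emph{locally}, inside the induction, where they enter only through their restriction to the annulus $[R_j^-,2R_j^-]$. The resulting Gram matrix $M$ is explicitly estimable there, and crucially, the bound on the $k\ge 1$ coefficients $c_{j,k}^\infty$ is obtained by \emph{coupling} \eqref{cfromd} (which controls $\{c_{j,k}^\infty\}_{k\ge 1}$ by $c_{j,0}^\infty$ plus lower-order data) with \eqref{dfromc} (which controls $c_{j,0}^\infty$ by $\{c_{j,k}^\infty\}_{k\ge 1}$, using the $\Lambda W$-orthogonality won in the first reduction), and closing the two-way system using the smallness in $\gamma$. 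This quantitative two-sided linear-algebra step is the heart of the argument; a contradiction/compactness argument of the Case~4 type, as you suggest, would not by itself produce the uniform-in-$\lambda$ constants needed here.

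Second, the induction in the paper runs over the soliton index $j$ with carefully chosen radii $R_j^+=\sqrt{\lambda_j\lambda_{j+1}}$ and $R_j^-=\lambda_{j+1}\gamma^{-1/4}$, and the data near each scale is decomposed by cutoffs into three pieces $v_j^1+v_j^2+v_j^3$ whose forcings are estimated separately in Lemma~\ref{step4} via Lemma~\ref{L43}. The inductive claim (Claim~\ref{multiintind}) is stated in terms of a \emph{truncated} $Z_{-3,\lambda_j}$ norm on $\{r\ge R_j^+\}$, and is glued via the relation between $Z_{-3,\lambda_j}$ and $Z_{-3,\lambda_{j-1}}$ on overlapping regions. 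Your "patch the dyadic annuli depending on which $\lambda_j$ (if any) $R$ is comparable to, and use free-wave estimates in the gaps" plan is a plausible alternative at the heuristic level, but it would require a separate argument showing that the projections onto the $T_k^\infty$-directions at adjacent scales are compatible in the gaps; the paper's nested-radii induction is precisely designed to avoid this matching problem.
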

\begin{proof}
We will prove it for the case $N \equiv 2 \mod 4,$ with the $N \equiv 0 \mod 4$ case being identical. We claim it suffices to prove that
\begin{equation}\label{int2}
\norm{\Pi_{L^2,\lambda}^\perp u_1}_{Z_{-3,\lambda}} \lesssim \sqrt{E_{\mathrm{out}}} \mbox{ if } u_1 = \Pi_{L^2,\lambda}^\perp u_1
\end{equation}
Indeed, suppose these hold. Let $u$ be a solution to \eqref{multieq}. We can assume by time symmetry that
\[
(u_0,u_1)=(0,u_1)
\]
if $N \equiv 0 \mod 4,$ and
\[
(u_0,u_1)=(u_0,0)
\]
if $N \equiv 2 \mod 4.$ We write $u_0=v_0+w_0$ for $w_0 = \Pi_{\dot{H}^1,\lambda}^\perp u_0,$ and similarly $u_1=v_1+w_1$ with $w_1 = \Pi_{L^2,\lambda}^\perp u_1.$ Then we claim that
\[
\lim_{t\to\infty} \int_{\vert x\vert > \vert t\vert} \vert \nabla_{t,x}v(t,x)\vert^2 dx \lesssim
\left\{
\begin{matrix}
\gamma(\lambda)\norm{v_1}_{Z_{-3,\lambda}}^2 & \mbox{ if } N \equiv 0 \mod 4,\\
\gamma(\lambda)\norm{\nabla v_0}_{Z_{-3,\lambda}}^2 & \mbox{ if } N \equiv 2 \mod 4.
\end{matrix}
\right.
\]
If this claim holds then
\[
\norm{w_1}_{Z_{-3,\lambda}}^2 \lesssim E_{\mathrm{out}} + \lim_{t\to \infty} \int_{\vert x\vert > \vert t\vert} \vert \nabla_{t,x} v(t,x)\vert^2 dx \lesssim E_{\mathrm{out}} + \gamma(\lambda) \norm{v_1}_{Z_{-3,\lambda}}^2 \lesssim E_{\mathrm{out}} + \gamma(\lambda)\left(\norm{u_1}_{Z_{-3,\lambda}}^2 + \norm{u_1}_{Z_{-3,\lambda}}^2\right)
\]
for $N \equiv 0 \mod 4,$ and similarly for $N \equiv 2\mod 4.$ To show \eqref{int2}, note that if $j_0$ is such that $\vert \alpha_{j_0}\vert = \sup_{1\leq j \leq J} \vert \alpha_j\vert$ where
\[
v_1 = \sum_{j=1}^J \alpha_j (\Lambda W)_{[\lambda_j]}, \; v_0 = \sum_{j=1}^J \alpha_j (\Lambda W)_{(\lambda_j)}.
\]
We obtain
\begin{eqnarray*}
\norm{v_1}_{Z_{-3,\lambda}} &\gtrsim& \frac{1}{\inf_{1\leq j \leq J}\left\langle \log \frac{\lambda_{j_0}}{\lambda_j}\right\rangle} \left(\int_{\lambda_{j_0}\leq \vert x\vert \leq 2\lambda_{j_0}} \vert v_1\vert^2 dx\right)^{\frac{1}{2}}\\
&\gtrsim &\vert\alpha_{j_0}\vert \frac{1}{\inf_{1\leq j \leq J}\left\langle \log \frac{\lambda_{j_0}}{\lambda_j}\right\rangle} \left(\int_{\lambda_{j_0}\leq \vert x\vert \leq 2\lambda_{j_0}} \vert \Lambda W_{[\lambda_{j_0}]}\vert^2 dx\right)^{\frac{1}{2}}\\
&\gtrsim & \vert \alpha_{j_0}\vert
\end{eqnarray*}
and similarly $\norm{\nabla v_0}_{Z_{-3,\lambda}} \geq \vert \alpha_{j_0}\vert.$ On the other hand, we have
\[
\lim_{t\to\infty} \int_{\vert x\vert \geq \vert t\vert} \vert \nabla_{t,x} v(t,x)\vert^2 dx \lesssim \lim_{t\to\infty} \int_{\vert x\vert \geq \vert t\vert} \left\vert \nabla_{t,x} \left(\sum_{j=1}^J t\alpha_j (\Lambda W)_{[\lambda_j]}\right)\right\vert^2 dx + \gamma(\lambda) \sup_{1\leq j \leq J} \alpha_j^2
\]
and similarly for $N \equiv 2 \mod 4.$ This proves the claim.

Now let $\gamma := \gamma(\lambda),$ and define the radii
\[
R_j^+ = \sqrt{\lambda_j\lambda_{j+1}}, \; R_j^- = \frac{\lambda_{j+1}}{\gamma^{\frac{1}{4}}}.
\]
Note that $R_j^- \leq \gamma^{\frac{1}{4}} R_j^+.$ For any $j,$ let
\[
\norm{f}_{Z_{-3,\lambda_j}} := \norm{f_{\left[\frac{1}{\lambda_j}\right]}}_{Z_{-3}}.
\]
Then
\[
\norm{f}_{Z_{-3,\lambda_j}} \lesssim \norm{f}_{-3,\lambda},
\]
and if $\mathrm{Supp} f \subset [R_j^+ , \infty),$ then
\[
\norm{f}_{Z_{-3,\lambda}} \lesssim \norm{f}_{Z_{-3,\lambda_j}}.
\]
We will prove the following statement by induction.
\begin{claim}\label{multiintind}
There is an absolute constant $\alpha>0$ such that for all $j \in \{1, \ldots , J\},$
\[
\norm{1_{r \geq R_j^+}u_1}_{Z_{-3,\lambda_j}} \lesssim \sqrt{E_{\mathrm{out}}} + \gamma^\alpha \norm{u_1}_{Z_{-3,\lambda}}
\]
if $N \equiv 0 \mod 4,$ and similarly
\[
\norm{1_{r \geq R_j^+}\nabla u_0}_{Z_{-3,\lambda_j}} \lesssim \sqrt{E_{\mathrm{out}}} + \gamma^\alpha \norm{\nabla u_0}_{Z_{-3,\lambda}}
\]
if $N \equiv 2 \mod 4.$
\end{claim}
We will prove the statement for $N \equiv 2 \mod 4,$ the other case being identical.

Suppose that $2 \leq J \leq J-1,$ and supppose Claim \ref{multiintind} holds for $j-1.$ Let $v_j,\tilde{v}_j$ be the solutions to
\[
\left\{
\begin{matrix}
\p_t^2 v_j - \Delta v_j + V_{\{\lambda_j\}}v_j = 0,\\
(v_j(0),\p_t v_j(0))= (u,\p_t u)\vert_{t=0},
\end{matrix}
\right.
\]
\[
\left\{
\begin{matrix}
\p_t^2 \tilde{v}_j - \Delta \tilde{v}_j + V_\lambda \tilde{v}_j = F(v_j) = -\sum_{i \neq j} V_{\{\lambda_i\}} v_j,\\
(\tilde{v}_j(0),\p_t \tilde{v}_j(0))=(0,0).
\end{matrix}
\right.
\]
Let $\chi$ be a smooth cut off with
\[
\chi(r) =
\left\{
\begin{matrix}
1 \mbox{ if } r \leq 1,\\
0 \mbox{ if } r \geq 2.
\end{matrix}
\right.
\]
Let $v_j^m$ be solutions to
\[
\p_t^2 v_j^m - \Delta v_j^m + V_{\{\lambda_i\}} v_j^m = 0
\]
with
\[
v_j^1(0) = \chi_{R_{j-1}^+} (1-\chi_{R_j^-})u_0 , v_j^2(0) = (1-\chi_{R_{j-1}^+})u_0, v_j^3(0) = \chi_{\frac{R_j^-}{48}} u_0
\]
where $\chi_\rho = \chi\left(\frac{\cdot}{\rho}\right).$ This implies that
\[
v_j=v_j^1+v_j^2 + v_j^3,
\]
and so
\[
F(v_j) = F(v_j^1) + F(v_j^2) + F(v_j^3).
\]
\begin{lemma}\label{step4}
For some absolute constant $\alpha>0,$ w   e have
\begin{equation}\label{438}
\norm{1_{\left\{r\geq \frac{R_j^-}{24}+\vert t\vert\right\}}F(v_j^1)}_{L^1_tL^2_x} \lesssim
\gamma^\alpha \norm{\nabla u_0}_{Z_{-3,\lambda}}
\end{equation}
if $N\equiv 2 \mod 4$ and similarly
\[
\norm{1_{\left\{r\geq \frac{R_j^-}{24}+\vert t\vert\right\}}F(v_j^1)}_{L^1_tL^2_x} \lesssim
\gamma^\alpha \norm{u_1}_{Z_{-3,\lambda}}
\]
in the case that $N \equiv 0 \mod 4,$
\begin{equation}\label{439}
\norm{1_{\left\{r\geq \frac{R_j^-}{24}+\vert t\vert\right\}} F(v_j^2)}_{L_t^1L_x^2}\lesssim \sqrt{E_{\mathrm{out}}} + \gamma^\alpha \norm{\nabla u_0}_{Z_{-3,\lambda}}
\end{equation}
if $N\equiv 2\mod 4$ with the analogous statement for $N \equiv 0\mod 4,$ and
\begin{equation}\label{trivialeq}
\norm{1_{\{r \geq \frac{R_j^-}{24}+\vert t\vert\}}F(v_j^3)}_{L_t^1L_x^2} = 0.
\end{equation}
\end{lemma}
We will prove this lemma later, and will continue the proof assuming its validity. By Lemma \ref{multistrich}, we have
\[
\sup_t \norm{(\tilde{v}_j(t),\p_t \tilde{v}_j(t))}_{\mathcal{H}_{R_j^-+\vert t\vert}} \lesssim \sqrt{E_{\mathrm{out}}} + \gamma^\alpha\norm{\nabla u_0}_{Z_{-3,\lambda}}.
\]
Indeed,
\[
\sup_{t}\norm{(\tilde{v}_j(t),\p_t\tilde{v}_j(t))}_{\mathcal{H}_{R_j^-+\vert t\vert}} \lesssim \norm{F(v_j)}_{L_t^1L_x^2(\{r \geq R_j^- + \vert t\vert\})} \lesssim \sqrt{E_{\mathrm{out}}} + \gamma^\alpha \norm{\nabla u_0}_{Z_{-3,\lambda}}
\]
where we used Lemma \ref{step4}. Therefore
\[
\sum_{\pm}\lim_{t\to\infty} \int_{\vert x\vert \geq \frac{R_j^-}{24}+\vert t\vert} \vert \nabla_{t,x}v_j(t)\vert^2 dx \lesssim E_{\mathrm{out}} + \gamma^{2\alpha} \norm{\nabla u_0}_{Z_{-3,\lambda}}^2.
\]
Now by the $N\equiv 2 \mod 4$ version of Lemma \ref{56equiv} there exist coefficients $c_{j,k}^\infty$ such that
\begin{equation}\label{eq106}
u_0 = \sum_{k=0}^{\frac{N-6}{2}} c_{j,k}^\infty (T_k^\infty)_{(\lambda_j)} + \bar{u}_{0,j}
\end{equation}
with
\begin{equation}\label{eq107}
\norm{1_{r\geq R_j^-}\nabla \bar{u}_{0,j}}_{Z_{-3,j}}\lesssim \sqrt{E_{\mathrm{out}}} + \gamma^\alpha\norm{\nabla u_0}_{Z_{-3,\lambda}}.
\end{equation}
Taking the $\dot{H}^1(\{R_j^- \leq \vert x\vert \leq 2R_j^-\})$ norm, we see that that
\begin{eqnarray*}
\int_{R_j^-}^{2R_j^-} \left\vert \nabla \left(\sum_{k=1}^{\frac{N-6}{2}} c_{j,k}^\infty (T_k^\infty)_{(\lambda_j)}\right)\right\vert^2 r^{N-1}dr &\lesssim& \int_{R_j^-}^{2R_j^-} \vert \nabla u_0\vert^2 r^{N-1} dr + \int_{R_j^-}^{2R_j^-} \vert \nabla \bar{u}_{0,j}\vert^2 r^{N-1}dr\\
&+& (c_{j,0}^\infty)^2 \int_{R_j^-}^{2R_j^-} \vert \nabla((\Lambda W)_{(\lambda_j)})\vert^2 r^{N-1} dr.
\end{eqnarray*}
Note that
\[
\int_{R_j^-}^{2R_j^-} \left\vert\nabla \left(\sum_{k=1}^{\frac{N-6}{2}} c_{j,k}^\infty (T_k^\infty)_{(\lambda_j)}\right)\right\vert^2 r^{N-1}dr = \sum_{k_1,k_2 \in \{1,\ldots , \frac{N-6}{2}\}} c_{j,k_1}^\infty c_{j,k_2}^\infty \int_{R_j^-}^{2R_j^-} \nabla (T_{k_1}^\infty)_{(\lambda_j)} \cdot \nabla (T_{k_2}^\infty)_{(\lambda_j)} r^{N-1}dr.
\]
Let $M = (m_{k_1k_2})_{1\leq k_1,k_2 \leq \frac{N-6}{2}}$ where
\[
m_{k_1k_2} :=\int_{R_j^-}^{2R_j^-} \nabla (T_{k_1}^\infty)_{(\lambda_j)} \cdot \nabla (T_{k_2}^\infty)_{(\lambda_j)} r^{N-1}dr
\]
Using that
\[
\vert (T_k^\infty)_{[\lambda_j]}\vert \sim c_k \frac{1}{(R_j^-)^{\frac{N}{2}}}\left(\frac{\lambda_j}{\lambda_{j+1}}\right)^{\frac{N-4}{2}} \gamma^{\frac{N-4}{8}}
\]
if $r \sim R_j^-$ we see that
\[
m_{k_1k_2}\sim c_{k_1}c_{k_2} \left(\frac{\lambda_j}{\lambda_{j+1}}\right)^{N-4} \gamma^{\frac{N-4}{4}} (R_j^-)^2
\]
and so for some $O(1)$ matrix $\tilde{M},$
\begin{eqnarray*}
&&\langle \tilde{M}(c_{j,k}^\infty)_{1\leq k \leq \frac{N-6}{2}},(c_{j,k}^\infty)_{1\leq k \leq \frac{N-6}{2}}\rangle \left(\frac{\lambda_j}{\lambda_{j+1}}\right)^{N-2} \gamma^{\frac{N-6}{4}}(R_j^-)^{N-6}\\
&&\lesssim \langle \log \gamma\rangle^2 \norm{\nabla u_0}_{Z_{-3,\lambda}}^2 + \left\langle \log \frac{\lambda_{j+1}}{\lambda_j\gamma^{\frac{1}{4}}}\right\rangle^2 \norm{1_{r\geq R_j^-}\nabla \bar{u}_{0,j}}_{Z_{-3,\lambda_j}}^2 + (c_{j,0}^\infty)^2 \left(\frac{\lambda_{j+1}^{\frac{N+2}{2}}}{\gamma^{\frac{N+2}{8}} \lambda_j^{\frac{N+2}{2}}}\right)^2.
\end{eqnarray*}
where we used that
\[
\vert \nabla (\Lambda W)_{(\lambda_i)}\vert \sim \frac{1}{\lambda_i^{\frac{N}{2}}} \left\vert(\nabla \Lambda W) \left(\frac{r}{\lambda_i}\right)\right\vert \sim \left(\frac{\lambda_{j+1}}{\lambda_j}\right)^{\frac{N+2}{2}}\gamma^{-\frac{N+2}{8}} R_j^{-\frac{N}{2}}.
\]
This implies that
\begin{eqnarray}\label{cfromd}
&&\left\langle \tilde{M}(c_{j,k}^\infty)_{1\leq k \leq \frac{N-6}{2}},(c_{j,k}^\infty)_{1\leq k \leq \frac{N-6}{2}}\right\rangle\nonumber\\
&&\lesssim \left(\frac{\lambda_{j+1}}{\lambda_j}\right)^{N-6} \gamma^{-\frac{N-6}{4}} (R_j^-)^{-(N-6)} \left(\langle \log r\rangle^2 \norm{\nabla u_0}_{Z_{-3,\lambda}}^2 + \left\langle \log \frac{\lambda_{j+1}}{\lambda_j \gamma^{\frac{1}{4}}}\right\rangle^2 \norm{1_{r\geq R_j^-}\nabla \bar{u}_{0,j}}_{Z_{-3,\lambda}}^2\right)\nonumber\\
&&+ (c_{j,0}^\infty)^2 \left(\frac{\lambda_{j+1}^{\frac{N+2}{2}}}{\gamma^{\frac{N+2}{8}} \lambda_j^{\frac{N+2}{2}}}\right)^2\\
&& \lesssim \frac{1}{\sqrt{\gamma}} \frac{\lambda_{j+1}^2}{\lambda_j^2}\log\left(\frac{1}{\sqrt{\gamma}} \frac{\lambda_{j+1}^2}{\lambda_j^2}\right) \left(\frac{1}{\gamma^{\frac{N+2}{8}}} \left(\frac{\lambda_{j+1}}{\lambda_j}\right)^{\frac{N+2}{2}}\vert c_{j,0}^\infty\vert + \norm{\nabla u_0}_{Z_{-3,\lambda}} + \sqrt{E_{\mathrm{out}}}\right).
\end{eqnarray}
On the other hand, we have $(u_0,u_1)\perp_{\dot{H}^1\times L^2} ((\Lambda W)_{(\lambda_i)},(\Lambda W)_{[\lambda_i]}).$ Using that
\[
\vert \nabla \Lambda W(r)\vert \lesssim r, \; R_j^- \lesssim \gamma(\lambda)^{\frac{3}{4}} \lambda_j,
\]
we have
\begin{equation}\label{semiorth}
\int_{R_j^- \leq \vert x\vert} \nabla u_0 \cdot \nabla(\Lambda W)_{(\lambda_j)} = \int_{\vert x\vert \leq R_j^-}\nabla u_0 \cdot \nabla(\Lambda W)_{(\lambda_j)} \lesssim O(\gamma^3 \vert \log \gamma\vert \norm{\nabla u_0}_{Z_{-3,\lambda}}).
\end{equation}
Now using that
\[
\int_{\vert x\vert \leq \frac{R_j^-}{\lambda_j}} \vert \nabla \Lambda W\vert^2 dx \lesssim\int_{\vert x\vert \leq \frac{R_j^-}{\lambda_j}} \vert x\vert^2 dx \lesssim \left(\frac{R_j^-}{\lambda_j}\right)^{N+2} \lesssim \frac{1}{\gamma^{\frac{N+2}{4}}}\gamma^{N+2} = \gamma^{\frac{3}{4}(N+2)}.
\]
We also see that
\begin{equation}\label{otherorth}
\int_{R_j^-\leq \vert x\vert} \nabla u_0 \cdot \nabla (\Lambda W)_{(\lambda_j)} = c_{j,0}^\infty \int_{\R^n} \vert \nabla \Lambda W\vert^2 \left(1+O\left(\gamma^{\frac{3}{4}(N+2)}\right)\right) + O\left(\sum_{k=1}^{\frac{N-6}{2}} \vert c_{j,k}\vert + \sqrt{E_{\mathrm{out}}} + \gamma^\alpha \norm{\nabla u_0}_{Z_{-3,\lambda}}\right).
\end{equation}
This implies that
\begin{equation}\label{dfromc}
\vert c_{j,0}^\infty\vert \int \vert \nabla \Lambda W\vert^2 \left(1+O\left(\gamma^{\frac{3}{4}(N+2)}\right)\right) \leq \sum_{k=1}^{\frac{N-6}{2}} \vert c_{j,k}\vert + \sqrt{E_{\mathrm{out}}} + \gamma^\alpha \norm{\nabla u_0}_{Z_{-3,\lambda}}
\end{equation}
where we used \eqref{semiorth}, \eqref{otherorth}. Combining this with \eqref{cfromd} we obtain that
\[
\vert c_{j,0}^\infty\vert \lesssim \sqrt{E_{\mathrm{out}}} + \gamma^\alpha \norm{\nabla u_0}_{Z_{-3,\lambda}}
\]
and using that
\[
\left\langle \tilde{M} (c_{j,k}^\infty)_{1\leq j \leq \frac{N-6}{2}},(c_{j,k}^\infty)_{1\leq j \leq \frac{N-6}{2}}\right\rangle \gtrsim \sum_{k=1}^\infty \vert c_{j,k}\vert
\]
we obtain that (since $\frac{N+2}{2} > \frac{N+2}{8}$ and $\gamma$ is small)
\begin{eqnarray*}
\sum_{k=1}^\infty \vert c_{j,k}\vert &\lesssim& \frac{1}{\sqrt{\gamma}} \left(\frac{\lambda_{j+1}}{\lambda_j}\right)^2 \log \left(\frac{1}{\sqrt{\gamma}}\frac{\lambda_{j+1}^2}{\lambda_j^2}\right)(\norm{\nabla u_0}_{Z_{-3,\lambda}} + \sqrt{E_{\mathrm{out}}})\\
&\lesssim & \frac{1}{\gamma^{\frac{3}{8}}} \frac{\lambda_{j+1}^{\frac{3}{2}}}{\lambda_j^{\frac{3}{2}}} (\norm{\nabla u_0}_{Z_{-3,\lambda}} + \sqrt{E_{\mathrm{out}}}).
\end{eqnarray*}
Since $\norm{\nabla (\Lambda W)_{(\lambda_i)}}_{Z_{-3,\lambda_i}} \lesssim 1,$ we have
\begin{equation}\label{infty}
\norm{c_{j,0}^\infty \nabla(\Lambda W)_{(\lambda_i)}}_{Z_{-3,\lambda_i}} \lesssim \vert c_{j,0}^\infty\vert \lesssim \sqrt{E_{\mathrm{out}}} + \gamma^\alpha \norm{\nabla u_0}_{Z_{-3,\lambda}}.
\end{equation}
We also have for $r \leq R_j^+$
\[
\vert \nabla (T_k^\infty)_{(\lambda_j)}\vert \lesssim \frac{1}{\lambda_j^{\frac{N}{2}}} \left(\frac{r}{\lambda_j}\right)^{-(N-1)} = \frac{\lambda_j^{\frac{N}{2}-1}}{r^{N-1}} \lesssim \left(\frac{\lambda_j}{R_j^+}\right)^{\frac{N}{2}-1} \frac{1}{r^{\frac{N}{2}}} = \frac{1}{r^{\frac{N}{2}}} \left(\frac{\lambda_j}{\lambda_{j+1}}\right)^{\frac{N-2}{4}}
\]
and so
\begin{eqnarray}\label{inftyother}
\norm{1_{r\geq R_j^+} c_{j,k}^\infty \nabla (T_k^\infty)_{(\lambda_j)}}_{Z_{-3,\lambda_j}} &\lesssim& \vert c_{j,k}^\infty\vert \norm{1_{\{r\geq R_j^+\}} \frac{1}{r^{\frac{N}{2}}} \left(\frac{\lambda_j}{\lambda_{j+1}}\right)^{\frac{N-2}{4}}}_{Z_{-3,\lambda_j}}\nonumber\\
&\lesssim & \vert c_{j,k}^\infty\vert \left(\frac{\lambda_j}{\lambda_{j+1}}\right)^{\frac{N-2}{4}} \norm{1_{\{r \geq R_j^+\}}\frac{1}{r^{\frac{N}{2}}}}_{Z_{-3,\lambda_j}}\nonumber\\
&\lesssim & \frac{1}{\gamma^{\frac{3}{8}}} \left(\frac{\lambda_j}{\lambda_{j+1}}\right)^{-\frac{1}{2}} (\norm{\nabla u_0}_{Z_{-3,\lambda}}+\sqrt{E_{\mathrm{out}}})\nonumber\\
&\lesssim & \gamma^{\frac{1}{8}}(\norm{\nabla u_0}_{Z_{-3,\lambda}}+\sqrt{E_{\mathrm{out}}})
\end{eqnarray}
Combining \eqref{infty}, \eqref{inftyother}, \eqref{eq106}, \eqref{eq107} we obtain
\[
\norm{1_{\{r \geq R_j^+\}}\nabla u_0}_{Z_{-3,\lambda_j}} \lesssim \sqrt{E_{\mathrm{out}}} + \gamma^{\alpha} \norm{\nabla u_0}_{Z_{-3,\lambda}}
\]
By the induction hypothesis we also have
\[
\norm{1_{\{r \geq R_{j-1}^+\}}\nabla u_0}_{Z_{-3,\lambda_{j-1}}} \lesssim \sqrt{E_{\mathrm{out}}} + \gamma^{\alpha} \norm{\nabla u_0}_{Z_{-3,\lambda}}
\]
and so using
\[
\norm{1_{\{r>R_j^+\}}f}_{Z_{-3,\lambda_j}} \lesssim \norm{1_{\{r\geq R_{j-1}^+\}}f}_{Z_{-3,\lambda_{j-1}}} + \norm{1_{\{r>R_j^+\}}f}_{Z_{-3,\lambda_j}}
\]
we complete the induction.

Now for $j=1,\ldots , J,$ there are slight changes. If $j=1,$ we set up $R_{j-1}^-=R_{j-1}^+=\infty$ and $v_j^2=0.$

For $j=J,$ we set $c_{j,k}^\infty = 0$ if $k \geq 1$ and $R_j^-=R_J^+=0.$ This completes the proof of Proposition \ref{p67} assuming Lemma \ref{step4}, which we will now prove.
\begin{proof}[Proof of Lemma \ref{step4}]
First note that
\[
\norm{1_{\{r\geq \frac{R_j^-}{24}+\vert t\vert\}} F(v_j^3)}_{L_t^1L_x^2} = 0
\]
by finite propagation speed. Also, suppose $1\leq j \leq J$ with $i\neq j.$ Then
\[
\norm{\nabla v_1^j(0)}_{Z_{-3,\lambda_j}} \lesssim \norm{\nabla u_0}_{Z_{-3,\lambda}}.
\]
If $i>j,$ then note that by definition of $v_j^1,$
\[
v_j^1(0,r)=0 \mbox{ if }r \leq \frac{\lambda_i}{48\gamma^{\frac{1}{4}}} \leq \frac{R_j^-}{48}.
\]
Now by Lemma \ref{L43}, we have
\[
\norm{1_{r\geq \vert t\vert} V_{(\lambda_i)}v_j^1}_{L_t^1L_x^2} \lesssim \gamma^{\frac{1}{4}}\norm{\nabla v_j^1}_{Z_{-3,\lambda_i}} \lesssim \gamma^{\frac{1}{4}}\norm{\nabla u_0}_{Z_{-3,\lambda}}.
\]
If instead $i<j,$ then by definition of $v_j^1,$ $v_j^1(0,r)=0$ if $r \geq 2\sqrt{\gamma}\lambda_i \geq 2R_{j-1}^+.$ Then by Lemma \ref{L43} we obtain
\[
\norm{1_{\{r\geq \vert t\vert\}}V_{\{\lambda_i\}}v_j^1}_{L_t^1L_x^2} \leq \gamma^{\frac{3}{8N}} \norm{\nabla u_0}_{Z_{-3,\lambda}}.
\]
These imply \eqref{438}. To show \eqref{439}, suppose $1\leq i \leq J,$ $i\neq j.$ Since
\[
\norm{\nabla((1-\chi_R^0)f)}_{Z_{-3,\lambda}}\leq C\norm{1_{\vert x\vert \geq R}\nabla f}_{Z_{-3,\lambda}},
\]
we have
\[
\norm{\nabla v_j^2(0)}_{Z_{-3,\lambda_{j-1}}} \leq C\norm{1_{\{r \geq R_{j-1}^+\}}\nabla u_0}_{Z_{-3,\lambda_{j-1}}} \lesssim \sqrt{E_{\mathrm{out}}} + \gamma^\alpha \norm{\nabla u_0}_{Z_{-3,\lambda}}.
\]
Since $v_j^2(0,r)=0$ if $r \leq R_{j-1}^+$ we have
\[
\norm{\nabla v_j^2(0)}_{Z_{-3,\lambda_j}} \lesssim \norm{\nabla v_j^2(0)}_{Z_{-3,\lambda_{j-1}}}.
\]
Hence
\[
\norm{\nabla v_j^2(0)}_{Z_{-3,\lambda_i}} \lesssim \sqrt{E_{\mathrm{out}}} + \gamma^\alpha \norm{\nabla u_0}_{Z_{-3,\lambda}}.
\]
Using Lemma \ref{L43}, we obtain
\[
\norm{1_{\{r\geq \vert t\vert\}}V_{\{\lambda_i\}}v_j^2}_{L_t^1L_x^2} \lesssim \sqrt{E_{\mathrm{out}}} + \gamma^\alpha \norm{\nabla u_0}_{Z_{-3,\lambda}}
\]
thus proving \eqref{439}.
\end{proof}
\end{proof}

\bibliography{ref}

\begin{thebibliography}{10}

\bibitem{cdkm}
Charles Collot, Thomas Duyckaerts, Carlos Kenig, and Frank Merle.
\newblock On channels of energy for the radial linearised energy critical wave
  equation in the degenerate case.
\newblock {\em International Mathematics Research Notices},
  2023(24):21015--21067, 2023.

\bibitem{mainref}
Charles Collot, Thomas Duyckaerts, Carlos Kenig, and Frank Merle.
\newblock On channels of energy for the radial linearised energy critical wave
  equation in the degenerate case.
\newblock {\em International Mathematics Research Notices},
  2023(24):21015--21067, 2023.

\bibitem{new}
Andres Contreras~Hip.
\newblock An alternative proof of the soliton resolution conjecture for radial
  equivariant wave maps.
\newblock {\em in preparation}, 2024+.

\bibitem{energypartition}
Rapha{\"e}l C{\^o}te, Carlos~E Kenig, and Wilhelm Schlag.
\newblock Energy partition for the linear radial wave equation.
\newblock {\em Mathematische Annalen}, 358(3):573--607, 2014.

\bibitem{corotational}
Thomas Duyckaerts, Carlos Kenig, Yvan Martel, and Frank Merle.
\newblock Soliton resolution for critical co-rotational wave maps and radial
  cubic wave equation.
\newblock {\em Communications in Mathematical Physics}, 391(2):779--871, 2022.

\bibitem{exterior10}
Thomas Duyckaerts, Carlos Kenig, and Frank Merle.
\newblock Exterior energy bounds for the critical wave equation close to the
  ground state.
\newblock {\em Communications in Mathematical Physics}, 379(3):1113--1175,
  2020.

\bibitem{solres}
Thomas Duyckaerts, Carlos Kenig, and Frank Merle.
\newblock Soliton resolution for the radial critical wave equation in all odd
  space dimensions.
\newblock {\em Acta Mathematica}, 230(1):1--92, 2023.

\bibitem{JL}
J~Jendrej and A~Lawrie.
\newblock Soliton resolution for energy--critical wave maps in the equivariant
  case. to appear in {J}.
\newblock {\em Amer. Math. Soc., https://doi. org/10.1090/jams/1012}.

\bibitem{keeltao}
Markus Keel and Terence Tao.
\newblock Endpoint strichartz estimates.
\newblock {\em American Journal of Mathematics}, 120(5):955--980, 1998.

\bibitem{ogchannels}
Carlos Kenig, Andrew Lawrie, Baoping Liu, and Wilhelm Schlag.
\newblock Channels of energy for the linear radial wave equation.
\newblock {\em Advances in Mathematics}, 285:877--936, 2015.

\bibitem{higherchan}
Liang Li, Ruipeng Shen, and Lijuan Wei.
\newblock Explicit formula of radiation fields of free waves with applications
  on channel of energy.
\newblock {\em arXiv preprint arXiv:2106.13396}, 2021.

\bibitem{radfields}
Liang Li, Ruipeng Shen, and Lijuan Wei.
\newblock Explicit formula of radiation fields of free waves with applications
  on channel of energy.
\newblock {\em Analysis \& PDE}, 17(2):723--748, 2024.

\end{thebibliography}
\bibliographystyle{plain}
\end{document}